\theoremstyle{plain}
\newtheorem{theorem}{Theorem}[section]
\theoremstyle{plain}
\newtheorem{proposition}{Proposition}[subsection]
\newtheorem{lemma}[proposition]{Lemma}
\newtheorem{corollary}[proposition]{Corollary}
\theoremstyle{definition}
\newtheorem{definition}{Definition}[subsection]
\theoremstyle{remark}
\newtheorem{remark}{Remark}[subsection]
\declaretheorem[name=Acknowledgements,numbered=no]{ack}
\newcommand{\vertiii}[1]{{\left\vert\kern-0.25ex\left\vert\kern-0.25ex\left\vert #1 
    \right\vert\kern-0.25ex\right\vert\kern-0.25ex\right\vert}}
\DeclareMathOperator\vol{vol}
\DeclareMathOperator\dvol{dvol}
\DeclareMathOperator\supp{supp}
\DeclareMathOperator\spann{span}
\def\e{\epsilon}
\def\R{{\mathbb R}}
\def\N{{\mathbb N}}
\def\H{{\mathbb H}}
\def\D{{\mathcal D}}
\def\S{{\mathbb S}}
\def\diam{\mbox{\rm diam} }
\def\Hor{\mbox{Hor}}
\def\Ver{\mbox{Ver}}
\def\M{\mathcal{M}}
\def\max{\mathrm{max}}
\begin{document}

\title[Decay properties of Vlasov fields on asymptotically hyperbolic manifolds]{Decay properties of Vlasov fields on non-trapping asymptotically hyperbolic manifolds}

\author[Anibal Velozo Ruiz]{Anibal Velozo Ruiz} \address{Facultad de matem\'atica, Pontificia Universidad Cat\'olica de Chile, Avenida Vicu\~na Mackenna 4860, Santiago, Chile.}
\email{apvelozo@mat.uc.cl}

\author[Renato Velozo Ruiz]{Renato Velozo Ruiz} \address{Laboratory Jacques-Louis Lions (LJLL), University Pierre and Marie Curie (Paris 6), 4
place Jussieu, 75252 Paris, France.}\address{Department of Mathematics, University of Toronto, 40 St. George Street, Toronto, ON, Canada.}
\email{renato.velozo.ruiz@utoronto.ca}

\begin{abstract}
In this paper, we study pointwise decay estimates in time for Vlasov fields on non-trapping asymptotically hyperbolic manifolds. We prove optimal decay estimates in time for the spatial density induced by Vlasov fields on these geometric backgrounds in dimension two. First, we show exponential decay for Vlasov fields on hyperbolic space supported away from the zero velocity set. In contrast, we obtain inverse polynomial decay for general Vlasov fields on hyperbolic space. In the second part of the article, we prove exponential decay for Vlasov fields on non-trapping asymptotically hyperbolic manifolds supported away from the zero velocity set. The proofs are obtained through a commuting vector field approach. We exploit the hyperbolicity of the geodesic flow in these geometric backgrounds, by making use of a commuting vector field in the unstable invariant distribution of phase space. 
\end{abstract}

\keywords{asymptotically hyperbolic manifolds, Vlasov equation, kinetic theory}
\subjclass{35Q70, 35Q83}

\maketitle

\setcounter{tocdepth}{1}
\tableofcontents

\section{Introduction}\label{section_introduction_hyperbolic}

In this paper, we study the evolution in time of collisionless many-particle systems on a Riemannian manifold $(\M,g)$. We consider collisionless many-particle systems described statistically by a distribution function satisfying a transport equation on phase space. More precisely, we investigate the linear dynamics of the solutions $f: [0,\infty) \times T\M\to [0,\infty)$ to the \emph{Vlasov equation on a Riemannian manifold} $(\M,g)$, given by $$\partial_tf+Xf=0,$$ in terms of the generator of the geodesic flow $X\in TT\M$ of the Riemannian manifold $(\M,g)$. The Vlasov equation, also known as the Liouville equation, is motivated by classical statistical mechanics. See \cite{T79, LP81} for further details.

The Vlasov equation on a Riemannian manifold describes the evolution in time of a collisionless system whose particles follow the trajectories set by the geodesic flow in the geometric background. The trajectories determined by the geodesic flow describe the motion of free falling particles on a Riemannian manifold. The Vlasov equation on a Riemannian manifold is motivated by its fundamental role in non-linear kinetic PDE systems on Riemannian manifolds. See \cite{G96, CK02} for more information on PDE models arising in kinetic theory. Consider, for instance, the Vlasov--Poisson system on a Riemannian manifold $(\M,g)$. This kinetic model describes a collisionless system on $(\M,g)$, for which the trajectories described by its particles are determined by the generator of the geodesic flow in $(\M,g)$, and the mean field generated by the many-particle system. The Vlasov--Poisson system on Euclidean space $(\R^n_x,\delta_E)$ has been extensively studied in the scientific literature due to its physical relevance \cite{LP81, BT11}. See \cite{DILS16} for more information about the Vlasov--Poisson system on hyperbolic space $\H^2$, and the unit sphere $\S^2$.

The Vlasov equation on a Riemannian manifold $(\M,g)$ is a linear transport equation along the geodesic flow in $(\M,g)$. Naturally, the dynamics described by Vlasov fields on a Riemannian manifold $(\M,g)$ depend strongly on the particular form of the geodesic flow in the phase space of the background. We consider a suitable class of \emph{non-trapping asymptotically hyperbolic geometric backgrounds} for which the corresponding geodesics escape to infinity. In this article, we are specifically interested on geometric collisionless systems that are \emph{dispersive}, in the sense that decay estimates in time hold for the \emph{spatial density} induced by Vlasov fields $f(t,x,v)$ on a Riemannian manifold $(\M,g)$. We define the \emph{spatial density} on a Riemannian manifold $(\M,g)$, by $$\rho(f)(t,x):=\int f(t,x,v)\dvol_{T_x\M}(v),$$ in terms of the corresponding volume form on $T\M$ with respect to the corresponding Riemannian metric. 

Classical dispersive collisionless systems are given by Vlasov fields on Euclidean space $(\R^n,\delta_{E})$. In the PDE literature, the first decay estimates for Vlasov fields on Euclidean space were obtained by Bardos and Degond \cite{BD}, who proved that the spatial density decays inverse polynomially in time for compactly supported initial data. The decay estimates for the spatial density in \cite{BD} use the method of characteristics which exploits the explicit representation of the geodesic flow in Euclidean space. Later, Strichartz estimates were obtained for Vlasov fields on Euclidean space by Castella and Perthame \cite{CasPer}. More recently, new decay estimates for Vlasov fields on Euclidean space were obtained by Smulevici \cite{Sm}, who proved that the spatial density decays inverse polynomially in space and time without assuming compact support on the initial distribution. The decay estimates for the spatial density in \cite{Sm} use a robust vector field method that exploits commuting vector fields for the Vlasov equation on Euclidean space. The methods developed in \cite{BD} and \cite{Sm} to obtain decay estimates for the spatial density on Euclidean space, were used in these works to prove the \emph{non-linear stability of the vacuum solution} for the Vlasov--Poisson system on Euclidean space. The \emph{vacuum solution} for the Vlasov--Poisson system on Euclidean space is defined as the distribution $f\equiv 0$ that vanishes everywhere. 

Vector field methods have been used to obtain decay estimates for collisionless systems in several settings. In \cite{FJS17}, Fajman, Joudioux, and Smulevici, developed a vector field method to prove decay estimates in space and time for velocity averages induced by \emph{relativistic Vlasov fields} on Minkowski spacetime. The methods developed in \cite{FJS17} were used later to prove the non-linear stability of Minkowski spacetime, as a solution of the Einstein--massive Vlasov system by Fajman et al. \cite{FJS}, and as a solution of the Einstein--massless Vlasov system by Bigorgne et al. \cite{BFJST}. Around the same time, and independently, vector fields methods were used to prove the non-linear stability of Minkowski spacetime, as a solution of the Einstein--massless Vlasov system by Taylor \cite{T}, and as a solution of the Einstein--massive Vlasov system by Lindblad and Taylor \cite{LT}. We observe that the article by Taylor \cite{T} underscores the relevance of Jacobi fields in the tangent bundle of spacetime for the use of vector field methods in geometric backgrounds. 

In this paper, we study the Vlasov equation on non-compact Riemannian manifolds, for which the associated geodesic flow is \emph{hyperbolic}. The geodesic flow in a Riemannian manifold is \emph{hyperbolic}, if there exists an invariant decomposition of the tangent of the unit tangent bundle, into three subspaces: the subspace $E_0$ spanned by the generator of the geodesic flow, a \emph{stable subspace} $E_s$ where the differential of the geodesic flow contracts uniformly, and an \emph{unstable subspace} $E_u$ where the differential of the geodesic flow expands uniformly. Anosov \cite{ano} proved that the class of Riemannian manifolds with bounded and strictly negative sectional curvature have hyperbolic geodesic flows. See \cite{KH, Pat99} for further details on hyperbolic geodesic flows. In particular, \emph{hyperbolic space} $\H^n$ -- the unique simply connected Riemannian manifold with constant sectional curvature equal to minus one -- has a hyperbolic geodesic flow. The Vlasov equation on compact negatively curved Riemannian manifolds is a classical subject of study in hyperbolic dynamical systems, motivated by the rich chaotic dynamics determined by the corresponding geodesic flows. We, on the other hand, are interested in studying the Vlasov equation on Riemannian manifolds where the corresponding geodesic flow disperse.

In this article, we prove pointwise decay in time for the spatial density induced by a Vlasov field on non-trapping asymptotically hyperbolic manifolds. Decay estimates for the spatial density on these geometric backgrounds hold due to the lack of recurrence of the corresponding geodesic flows. We establish pointwise decay estimates for the spatial density by proving time decay for the volume of the velocity support of the distribution function. The proofs of our main results exploit the hyperbolicity of the geodesic flow to estimate the derivatives of the spatial density. We make use of commuting vector fields contained in suitable distributions of phase space\footnote{We call a \emph{distribution} $\Delta$ in $T\M$ to a regular map $(x,v)\mapsto \Delta_{(x,v)}\subseteq T_{(x,v)}T\M$, where $\Delta_{(x,v)}$ are vector subspaces satisfying suitable conditions (in the standard sense in differential geometry).}. The commuting vector fields used in this paper are Jacobi fields in the tangent bundle with respect to the Sasaki metric. 

The geodesics on non-trapping asymptotically hyperbolic manifolds escape to infinity, thus, the main part of the analysis of Vlasov fields on these backgrounds is carried out in the far-away region. We consider asymptotically hyperbolic manifolds, so the geometry in the far-away region is close to the one in hyperbolic space. With this motivation, we first prove pointwise decay in time for the spatial density induced by Vlasov fields on hyperbolic space. The hyperbolicity of the geodesic flow in $\H^n$ holds when considering the geodesic flow in the unit tangent bundle $T^1\H^n$. In contrast, the hyperbolicity of the geodesic flow on $T\H^n$ is not uniform, since it degenerates when $|\dot{\gamma}|_g$ becomes zero, for a geodesic $\gamma$. Let $\alpha>0$. Our first result shows exponential decay of the spatial density $\rho(f)$ and its derivatives, for Vlasov fields on $(\H^2,g_{\H^2})$ that have initial data compactly supported on $\mathcal{D}_{\alpha}:=\{g_x(v,v)\geq \alpha^2\}$. Similar pointwise decay estimates in time hold on hyperbolic space in higher dimensions. We describe the modifications required to treat this case in Subsection \ref{subsection_decay_spatial_density_higher_dimension}.

Later, we consider Vlasov fields on a non-trapping asymptotically hyperbolic manifold $(\M,g)$ in dimension two. In this context, we show exponential decay of the spatial density $\rho(f)$ and its derivatives, for Vlasov fields on $(\M,g)$ that have initial data compactly supported on $\mathcal{D}_{\alpha}:=\{g_x(v,v)\geq \alpha^2\}$. Similar pointwise decay estimates in time hold in higher dimensions. We obtain decay rates determined by the Lyapunov exponents of the geodesic flow with respect to the Liouville measure. In other words, we obtain decay rates determined by the rates of expansion and contraction of the differential of the geodesic flow on the stable and unstable subspaces. 

We also show decay estimates for the spatial density induced by Vlasov fields without the support constraint near the zero velocity set $\{g_x(v,v)=0\}$. We address this issue for Vlasov fields on hyperbolic space $(\H^2,g_{\H^2})$. The hyperbolicity of the geodesic flow on $T\H^2$ is not uniform, since it degenerates when $|\dot{\gamma}|_g$ becomes zero. Thus, we do not expect to obtain exponential decay for general Vlasov fields on hyperbolic space. In this setup, we obtain inverse polynomial decay of the spatial density $\rho(f)$ and its derivatives, for Vlasov fields on $\H^2$. Similar pointwise decay estimates in time hold in higher dimensions.

We investigate the Vlasov equation on non-trapping asymptotically hyperbolic manifolds to offer new insights on the study of stability results for geometric collisionless systems, where the corresponding Hamiltonian flows are \emph{hyperbolic}. The decay estimates derived in this article are particularly suitable to address the non-linear stability problem of the vacuum solution for the Vlasov--Poisson system on hyperbolic space. See \cite{VV23, BVV23} for related works on small data solutions for the Vlasov--Poisson system with the unstable trapping potential $\frac{-|x|^2}{2}$, where the corresponding Hamiltonian flows are hyperbolic. A broad class of geometric collisionless systems arise in general relativity, where the corresponding distribution functions satisfy the \emph{relativistic Vlasov equation} (see \cite{A11} for more information). Consider, for instance, the timelike geodesic flow in de Sitter spacetime, which defines a hyperbolic flow. We hope the methods used in this paper will be helpful to understand more complicated collisionless systems.

\subsection{Vlasov fields on non-trapping asymptotically hyperbolic manifolds}

In this section, we put in precise mathematical terms the objects we study through the paper: Vlasov fields on non-trapping asymptotically hyperbolic manifolds.

\subsubsection{The geometric backgrounds}

First, we introduce the Riemannian manifolds $(\M,g)$ where the Vlasov fields studied in this paper are set. \\

\emph{Hyperbolic space} $(\H^n,g_{\H^n})$. We consider hyperbolic space $(\H^n, g_{\H^n})$ written in the model of the upper branch of the hyperboloid $$\H^n=\Big\{ (\cosh r, (\sinh r) \gamma)\in \R_+\times\R^{n}:~\gamma\in \S^{n-1},~r\geq 0 \Big\}$$ contained in Minkowski spacetime $(\R^{n+1},~\eta:=-\mathrm{d}t\otimes \mathrm{d}t+\mathrm{d}x^1\otimes \mathrm{d}x^1+\dots+\mathrm{d}x^n\otimes \mathrm{d}x^n)$, with the Riemannian metric $g_{\H^n}$ induced by the Lorentzian metric $\eta$ of Minkowski spacetime, given by $$g_{\H^n}=\mathrm{d}r\otimes \mathrm{d}r+\sinh^2 r \mathrm{d}\gamma_{\S^{n-1}},$$ in terms of the standard Riemannian metric on the unit sphere $\mathrm{d}\gamma_{\S^{n-1}}$. \\

\emph{Non-trapping asymptotically hyperbolic manifolds $(\M,g)$.} Let $(\M,g)$ be an oriented complete Riemannian manifold with bounded and strictly negative Gaussian curvature $K_g$. In the following, we denote the closed ball of radius $R_0$ centered at the origin of $\H^n$ by $\overline{B(R_0)}$. We consider the following definitions.

\begin{definition}\label{ahyp}
A Riemannian manifold $(\M,g)$ is \emph{asymptotically hyperbolic} if there exists a compact set $K\subset \M$, $R_0>0$, $\beta>2$, and a diffeomorphism $\Psi:\H^n\setminus \overline{B(R_0)}\to \M\setminus K$ such that $$|D_{g_{\H^n}}^j(\Psi^*g-g_{\H^n})|_{g_{\H^n}}=O(e^{-\beta r}),$$
for every $j\in\{0,1,2\}$.  
\end{definition}

\begin{definition}\label{nontrapping}
A Riemannian manifold $(\M,g)$ is \emph{non-trapping} if the orbit under the geodesic flow of any pair $(x,v)\in T^1\M$ is unbounded. 
\end{definition}

In the rest of the paper, when considering Riemannian manifolds with variable curvature, we will focus on non-trapping asymptotically hyperbolic Riemannian manifolds according to the previous definitions.

\subsubsection{The Vlasov equation on a Riemannian manifold}

In this article, we consider a distribution function $f:[0,\infty)\times T\M\to [0,\infty)$ satisfying the Vlasov equation on a non-trapping asymptotically hyperbolic manifold $(\M, g)$. The \emph{Vlasov equation on a Riemannian manifold} $(\M, g)$, with respect to the coordinate system $(x,v)\in T\M$, takes the form
\begin{equation}\label{Vlasov_eqn_coordinates}
\partial_t f+v^{i}\partial_{x^{i}}f-v^{i}v^{j}\Gamma^{k}_{ij}\partial_{v^{k}}f=0,
\end{equation}
where $\Gamma_{ij}^k$ are the Christoffel symbols of $(\M, g)$. We consider initial data for the Vlasov equation $f_0:T\M\to [0,\infty)$. The Vlasov equation on a complete $C^k$ Riemannian manifold $(\M,g)$ is \emph{well-posed} with initial data $f_0\in C_{x,v}^j(T\M)$ for every $j\in \{0,1,\dots , k\}$. The well-posedness of the Vlasov equation on a complete $C^k$ Riemannian manifold follows directly by the representation formula of Vlasov fields in terms of the geodesic flow, by using the regularity of the flow map. 

In the specific case of hyperbolic space $(\H^2,g_{\H^2})$, the Vlasov equation takes the form $$\partial_t f+v^{r}\partial_{r}f+v^{\theta}\partial_{\theta}f+(v^{\theta})^2\sinh r\cosh r \partial_{v^{r}}f-2v^rv^{\theta}\coth r\partial_{v^{\theta}}f=0,$$ in terms of the Christoffel symbols of $\H^2$ in the coordinate system previously introduced. See Subsection \ref{subsection_sasaki_hyperbolic_space} for the precise Christoffel symbols of $(\H^2, g_{\H^2})$ in the hyperboloidal coordinate system. The Vlasov equation on $(\H^2,g_{\H^2})$ is well-posed with initial data $f_0\in C_{x,v}^j(T\M)$ for every $j\in \N_0$.

\subsection{The main results}

In this subsection, we present the main decay estimates for the Vlasov fields studied in this paper. First, we consider Vlasov fields on hyperbolic space, and later Vlasov fields on non-trapping asymptotically hyperbolic manifolds.

From now on, we use the notation $A\lesssim B$ to specify that there exists a universal constant $C > 0$ such that $A \leq CB$, where $C$ depends only on the dimension $n$, the corresponding order of regularity, or other fixed constants.

\subsubsection{Decay for Vlasov fields on hyperbolic space}

In this section, we state pointwise decay estimates in time for the \emph{spatial density} induced by a solution $f(t,x,v)$ to the Vlasov equation on hyperbolic space $(\H^2,g_{\H^2})$, given by $$\rho(f)(t,x)=\int f(t,x,v)\sinh r \mathrm{d}v^r\mathrm{d}v^{\theta},$$ in terms of the volume form on $(\H^2,g_{\H^2})$ written in local coordinates. In order to avoid the region of phase space $\{g_x(v,v)=0\}$ where dispersion degenerates, we define an invariant subset $\D_{\alpha}$ of the tangent bundle, given by $$\D_{\alpha}:=\Big\{ (x,v)\in T\H^2:~g_x(v,v)\geq \alpha^2 \Big\},$$ where we will assume the initial distribution function is supported. In the following, we take derivatives of the spatial density using the normalized frame $\{\partial_{r},\sinh^{-1}r\partial_{\theta}\}$ on $(\H^2,g_{\H^2})$.

\begin{theorem}[Exponential decay for Vlasov fields supported on $\D_{\alpha}$] \label{thm_decay_spatial_derivatives_hyperbolic}
Let $\alpha>0$. Let $f_0\in C_{x,v}^1(T\H^2)$ be an initial data for the Vlasov equation on hyperbolic space that is compactly supported on $\D_{\alpha}$. Then, the spatial density induced by the corresponding Vlasov field $f$ satisfies
\begin{align}
        &\qquad |\rho(f)(t,x)|\lesssim \dfrac{1}{\exp(\alpha t)}\|f_0\|_{L^{\infty}_{x,v}},\label{estimate_spatial_density_no_derivatives}\\
        \Big|\dfrac{1}{\sinh r}\partial_{\theta}\rho(f)(t,x)&\Big|\lesssim \dfrac{1}{\exp(2\alpha t)}\|f_0\|_{W^{1,\infty}_{x,v}},\qquad |\partial_{r}\rho(f)(t,x)|\lesssim \dfrac{1}{\exp(\alpha t)}\|f_0\|_{W^{1,\infty}_{x,v}},\label{estimate_spatial_density_with_derivatives}
\end{align}
for every $t\geq 0$ and every $x\in \H^2$.
\end{theorem}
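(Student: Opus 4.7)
The plan is to combine the representation formula $f(t,x,v) = f_0(\phi_{-t}(x,v))$ along the geodesic flow with commuting vector fields on $T\H^2$ obtained by lifting Killing fields of $\H^2$ (a distinguished subclass of Jacobi fields in the tangent bundle).

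For the zeroth-order bound \eqref{estimate_spatial_density_no_derivatives}, the representation formula gives $\|f\|_{L^\infty} \le \|f_0\|_{L^\infty}$ and reduces the task to controlling the volume of the velocity support $S_t(x) := \{v \in T_x\H^2 : \phi_{-t}(x,v) \in \supp f_0\}$. Conservation of $|v|_g$ under the geodesic flow together with the support assumption confines $|v|_g$ to a compact interval $[\alpha, C]$. Writing $v = s\hat{v}$ in polar coordinates on $T_x\H^2$, the admissible directions $\hat{v}$ at fixed $s$ are those whose backward geodesic of length $st$ lands in the spatial projection of $\supp f_0$, contained in some $\overline{B(0,R)}$. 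Since the geodesic sphere of radius $L$ in $\H^2$ has circumference $2\pi\sinh L$ and meets $\overline{B(0,R)}$ in an arc of length $\lesssim R$, this angular set has measure $\lesssim R/\sinh(st)$, and integrating $\int_\alpha^C s/\sinh(st)\,ds \lesssim e^{-\alpha t}$ closes the first estimate.

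For the angular derivative, the rotational Killing field $\partial_\theta$ lifts in the coordinates $(r,\theta,v^r,v^\theta)$ simply to $\partial_\theta$ on $T\H^2$, which commutes with the geodesic spray $X$; hence $\partial_\theta f$ solves the Vlasov equation with initial data $\partial_\theta f_0$, and the Killing identity $\partial_\theta \rho(f) = \rho(\partial_\theta f)$ combined with the zeroth-order bound yields $|\partial_\theta \rho(f)| \lesssim e^{-\alpha t}\|f_0\|_{W^{1,\infty}}$. The improvement to $e^{-2\alpha t}$ comes from the fact that every particle contributing to $\rho(f)(t,x)$ started in $\overline{B(0,R)}$ and travelled a distance $\ge \alpha t$, so $\supp \rho(f)(t,\cdot) \subset \{r \ge \alpha t - R\}$, on which $\sinh^{-1} r \lesssim e^{-\alpha t}$. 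For the radial derivative, since $\partial_r$ is not globally Killing, I would use for each $x \in \H^2 \setminus \{0\}$ the Killing field $K_x$ generating hyperbolic translations along the geodesic $g_x$ joining $x$ to the origin, normalized so that $K_x(x) = \partial_r|_x$; the formula $|K_x|_g = \cosh d(\cdot, g_x)$ together with the fact that $g_x$ passes through the origin forces $|K_x|$ and $|\nabla K_x|$ to be bounded on $\supp f_0$ by a constant depending only on $R$. The lift $\tilde K_x$ commutes with $X$, so $\tilde K_x f$ is a Vlasov solution with initial data uniformly bounded by $C\|f_0\|_{W^{1,\infty}}$, and the Killing identity $\partial_r \rho(f)(t,x) = \rho(\tilde K_x f)(t,x)$ reduces this to the zeroth-order bound applied to $\tilde K_x f$.

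The main difficulty I expect is twofold. First, rigorously establishing the Killing identity $K\rho(f) = \rho(\tilde K f)$ in the presence of the $x$-dependent volume form $\sinh r\, dv^r dv^\theta$ requires that a vertical integration by parts in $v$ against the vertical component of $\tilde K$ produce exactly the term needed to cancel the $\partial_r \sinh r = \cosh r$ contribution, which ultimately rests on the vanishing of the Riemannian divergence of any Killing field. Second, quantifying the geometric support restriction $r \ge \alpha t - R$ to extract the extra factor $e^{-\alpha t}$ in the angular direction, and handling the region near the origin where $\sinh^{-1} r$ blows up (where one uses that $\partial_\theta \rho(f)$ itself vanishes at $r=0$ for any $C^1$ density), is the place where the exponential growth of geodesic spheres in negative curvature enters decisively.
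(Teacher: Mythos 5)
Your proposal is correct in substance, but it takes a genuinely different route from the paper. For the zeroth-order bound \eqref{estimate_spatial_density_no_derivatives} the paper never uses your shadow-angle/circumference argument: it integrates the geodesic flow explicitly on the hyperboloid via the conserved quantities $E$ and $l$ (the linear ODE $\tfrac{\mathrm{d}^2}{\mathrm{d}t^2}\cosh r=E^2\cosh r$), deduces $c e^{Et}\le \sinh r(t)\le Ce^{Et}$ along orbits from $\supp(f_0)$, hence $|v^{\theta}\sinh r|\lesssim e^{-\alpha t}$ and $E^2-(v^r)^2\lesssim e^{-2\alpha t}$, and so bounds $\vol_{T_x\H^2}(\Omega(t,x))$; your geodesic-sphere estimate delivers the same $e^{-\alpha t}$ volume decay and is valid (it is close in spirit to the non-optimal argument of Appendix \ref{app_subsection_decay_spatial_density_AH}), once the angular size of $\overline{B(0,R)}$ seen from distance $st$ is quantified by hyperbolic trigonometry. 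For the derivatives the paper does not use isometries at all: it writes $\Hor_{(x,v)}(\partial_r)$ and $\Hor_{(x,v)}(\sinh^{-1}r\,\partial_{\theta})$ in the parallel frame as combinations of $X$ and $H$, substitutes $X=\tfrac1t\big((tX+Y)-Y\big)$ and $H=e^{-Et}U-EV$ with $U$ the unstable commuting field, and integrates by parts in $v$; the weights $v^r/E^2$ and $\sinh r\,v^{\theta}/E^2$ then produce the rates $e^{-\alpha t}$ and $e^{-2\alpha t}$. Your replacement --- the lifted rotation $\partial_{\theta}$, and for each $x$ the lifted transvection $K_x$ along the geodesic through $0$ and $x$, together with the support localization $r\ge \alpha t-R$ to convert $|\partial_{\theta}\rho|\lesssim e^{-\alpha t}$ into the stated $e^{-2\alpha t}$ bound --- is simpler, avoids all integration by parts, and is uniform in $x$ because $|K_x|=\cosh d(\cdot,\mathrm{axis})\le \cosh R$ and $|\nabla K_x|\lesssim \cosh R\sinh R$ on $\overline{B(0,R)}$. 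What the paper's heavier machinery buys is precisely what the symmetry argument cannot give: the same scheme runs essentially verbatim on the asymptotically hyperbolic manifolds of Theorem \ref{thm_decay_spatial_density_ah_manif}, which have no Killing fields (there $U$ is built from the Riccati solution $q_u$), and on Theorem \ref{thm_decay_spatial_derivatives_hyperbolic_no_suppt_restrict}, where without the lower bound $E\ge\alpha$ your improvement $\sinh^{-1}r\lesssim e^{-\alpha t}$ on the support degenerates while the $E$-weights in the paper's velocity averages still integrate to the polynomial rates.

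Two points in your sketch should be tightened. First, the small-time/near-origin regime: for $\alpha t\le R+1$ the factor $\sinh^{-1}r$ is not controlled by the support localization, and ``$\partial_{\theta}\rho$ vanishes at $r=0$'' is not by itself sufficient; the correct statement is that $\sinh^{-1}r\,\partial_{\theta}\rho$ is the derivative of $\rho(f)(t,\cdot)$ along a unit vector, hence bounded by its Riemannian gradient, which is finite and uniformly bounded for $t$ in a compact interval by differentiating the representation formula $f(t)=f_0\circ\phi_{-t}$; this disposes of bounded times with a constant depending only on $\supp(f_0)$ and $\alpha$. Second, the identity $K\rho(f)=\rho(\widetilde K f)$ for a Killing field $K$ with complete lift $\widetilde K$ is obtained most cleanly not by a divergence computation but by noting that the lifted isometry flow maps fibers to fibers by linear isometries, so $\rho(f)(t,\psi_s(x))=\int f\big(t,\mathrm{d}\psi_s(x,v)\big)\dvol_{T_x\H^2}(v)$, and differentiating at $s=0$; the same observation shows $[\widetilde K,X]=0$, so $\widetilde K f$ is indeed a Vlasov field with data $\widetilde K f_0$ supported in $\supp(f_0)\subset\D_{\alpha}$, to which your zeroth-order bound applies.
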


\begin{remark}
\begin{enumerate}[(a)]
    \item We obtain \emph{optimal decay rates} for the spatial density and its first order derivatives. The decay rates are expressed in terms of the \emph{minimal Lyapunov exponent} $\alpha$ associated to the geodesics on the support of the distribution function. We also emphasize the difference between the decay rates obtained for the radial and angular derivatives of the spatial density. This discrepancy comes from weights that arise when estimating the derivatives of the spatial density in terms of the commuting vector fields for the Vlasov equation. See Subsection \ref{subsection_decay_derivatives_spatial_density_dimension_two} for further details.
    \item The decay rates in Theorem \ref{thm_decay_spatial_derivatives_hyperbolic} degenerate when $\alpha$ becomes zero. This holds because the hyperbolic expansion/contraction associated to the geodesics in hyperbolic space degenerates when $g_x(v,v)$ becomes zero. For this reason, Vlasov fields on hyperbolic space do not decay exponentially in time for general Vlasov fields. This behavior is compatible with inverse polynomial decay in time.  
\end{enumerate}
\end{remark}

Similar decay estimates in time hold for the spatial density on hyperbolic space in higher dimensions. We describe the modifications required to treat this case in Subsection \ref{subsection_decay_spatial_density_higher_dimension}. 

\begin{theorem}[Polynomial decay for Vlasov fields on hyperbolic space]\label{thm_decay_spatial_derivatives_hyperbolic_no_suppt_restrict}
Let $f_0\in C_{x,v}^1(T\H^2)$ be a compactly supported initial data for the Vlasov equation on hyperbolic space. Then, the spatial density induced by the corresponding Vlasov field $f$ satisfies
\begin{align}
        &\qquad |\rho(f)(t,x)|\lesssim \dfrac{1}{t^2} \|f_0\|_{L^{\infty}_{x,v}},\label{estimate_spatial_density_no_derivatives_no_suppt_restrict}\\
        \Big|\dfrac{1}{\sinh r}\partial_{\theta}\rho(f)(t,x)&\Big|\lesssim \dfrac{1}{t}\|f_0\|_{W^{1,\infty}_{x,v}},\qquad |\partial_{r}\rho(f)(t,x)|\lesssim \dfrac{1}{t}\|f_0\|_{W^{1,\infty}_{x,v}},\label{estimate_spatial_density_with_derivatives_no_suppt_restrict}
\end{align}
for every $t\geq 1$ and every $x\in \H^2$.
\end{theorem}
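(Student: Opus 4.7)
The plan is to decompose the initial data by velocity magnitude and combine the exponential decay of Theorem~\ref{thm_decay_spatial_derivatives_hyperbolic} with a direct dispersive estimate for the piece supported near the zero-velocity set. Fix a threshold $\alpha>0$ to be optimized over $t$, together with a smooth cutoff $\chi_\alpha$ localizing to $\mathcal{D}_\alpha$, and split $f_0 = \chi_\alpha f_0 + (1-\chi_\alpha) f_0 =: f_0^{(\geq)} + f_0^{(<)}$. Since the geodesic flow preserves $g_x(v,v)$, the two summands evolve as independent Vlasov fields; applying Theorem~\ref{thm_decay_spatial_derivatives_hyperbolic} to $f^{(\geq)}$ controls it by $e^{-\alpha t}\|f_0\|_{L^\infty_{x,v}}$ (with the corresponding derivative estimates), so the task reduces to bounding $\rho(f^{(<)})$ and its derivatives.

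For the small-velocity piece, one exploits that $\H^2$ has no conjugate points: the map $\Phi_t : T_x\H^2 \to \H^2$, $v \mapsto \exp_x(-tv)$, is a global diffeomorphism, and its Jacobian $t \cdot \sinh(t|v|_g)/|v|_g$, obtained from the Jacobi equation at constant sectional curvature $-1$, satisfies $|d\Phi_t/dv|\geq t^2$ uniformly by the elementary inequality $\sinh u \geq u$. After the change of variables $y=\Phi_t(v)$,
\begin{equation*}
|\rho(f^{(<)})(t,x)| \leq \|f_0\|_{L^\infty_{x,v}}\int_{\supp_y(f_0)} \frac{d(x,y)}{t^2\sinh(d(x,y))}\,dy \lesssim \frac{\|f_0\|_{L^\infty_{x,v}}}{t^2},
\end{equation*}
since $d/\sinh d \leq 1$ and the spatial support of $f_0$ is compact. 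This bound is independent of $\alpha$, much sharper than the crude volume estimate $\pi\alpha^2$ on the disk $\{|v|_g<\alpha\}$ (which would entail a logarithmic loss after optimization). Choosing $\alpha = 3(\log t)/t$ so that $e^{-\alpha t}\leq t^{-2}$ then yields the stated $t^{-2}$ decay on $\rho(f)$.

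For the derivatives, I plan to apply the commuting vector field strategy of Theorem~\ref{thm_decay_spatial_derivatives_hyperbolic} to the small-velocity piece. Choosing Jacobi vector fields $Y$ on $T\H^2$ (with respect to the Sasaki metric) in the stable distribution whose horizontal projections realize the normalized frame $\{\partial_r,(\sinh r)^{-1}\partial_\theta\}$, a vertical-divergence identity reduces each of $\partial_r\rho(f^{(<)})$ and $(\sinh r)^{-1}\partial_\theta\rho(f^{(<)})$ to an integral of $Yf^{(<)}$ plus lower-order terms. The commutation relation and the area bound $\operatorname{Area}(\{v\in T_x\H^2 : \phi_{-t}(x,v)\in\supp(f_0^{(<)})\})\lesssim t^{-2}$ from the Jacobian argument above combine to yield a decay of order $t^{-2}\cdot(\text{Jacobi growth})$; since the relevant Jacobi fields degenerate from exponential to \emph{linear} growth $t$ near the zero-velocity set, this produces the stated $t^{-1}$ rate in place of $t^{-2}$. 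The main obstacle is implementing the commuting vector field machinery uniformly down to the set $\{g_x(v,v)=0\}$ where the hyperbolicity degenerates: the polynomial substitutes provided by $\sinh u \geq u$ and the linear growth of the degenerate Jacobi fields are precisely what allow the argument to close at the claimed polynomial rates.
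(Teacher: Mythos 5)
Your treatment of the density bound \eqref{estimate_spatial_density_no_derivatives_no_suppt_restrict} is correct, and it is a genuinely different route from the paper's: the change of variables $y=\exp_x(-tv)$ with Jacobian $t\sinh(t|v|_g)/|v|_g\geq t^2$ gives $|\rho(f)(t,x)|\leq t^{-2}\|f_0\|_{L^\infty_{x,v}}\vol(\pi(\supp f_0))$ directly for the full distribution, so the velocity cutoff is superfluous for this part. The paper instead passes to polar coordinates $(E,\varphi)$ in the fiber and integrates the per-energy localization $|v^\theta \sinh r|\lesssim e^{-Et}$ over $E\in(0,E_{\max}]$; your exponential-map argument is cleaner for \eqref{estimate_spatial_density_no_derivatives_no_suppt_restrict} and needs no case analysis in $E$.

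The derivative estimates \eqref{estimate_spatial_density_with_derivatives_no_suppt_restrict} are where the proposal has a genuine gap. Your bookkeeping ``$t^{-2}\times(\text{Jacobi growth})$ with linear growth near the zero-velocity set'' is only valid where $Et\lesssim 1$, i.e. $E\lesssim t^{-1}$; but the large-velocity piece forces you to take $\alpha(t)\sim \log t/t$ (otherwise $e^{-\alpha t}$ is useless), so the support of $f^{(<)}$ contains the intermediate range $t^{-1}\lesssim E\lesssim \log t/t$, where the relevant growth factor is $e^{Et}$, of size up to $t^{3}$ or more — nothing like linear. In addition, the cutoff costs $\|\chi_\alpha f_0\|_{W^{1,\infty}_{x,v}},\|(1-\chi_\alpha)f_0\|_{W^{1,\infty}_{x,v}}\sim \alpha^{-1}\|f_0\|_{W^{1,\infty}_{x,v}}\sim (t/\log t)\|f_0\|_{W^{1,\infty}_{x,v}}$, which you never account for; with these two effects no choice of $\alpha(t)$ makes both pieces close at the rate $1/t$ by an ``area of the velocity support times sup of the commuted derivative'' estimate (at best one loses a logarithm, and as written much more). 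There is also a sign issue in the choice of distribution: reconstructing $H$ from the stable commuting field costs a factor $e^{+Et}$, since stable derivatives of the solution grow exponentially (see the remark after Lemma \ref{lemma_commuting_vector_fields_hyperboli_dimension_two}); what works is the unstable field $U=e^{Et}(H+EV)$, through $Hf=e^{-Et}Uf-EVf$. The paper's proof needs no splitting at all: it repeats the commuting-vector-field computation of Theorem \ref{thm_decay_spatial_derivatives_hyperbolic} keeping the particle-energy weights explicit, and then integrates bounds of the form $e^{-cEt}$ and $Ee^{-cEt}$ over $E\in(0,E_{\max}]$ in fiber polar coordinates; it is precisely this integration in $E$ — rather than a cutoff near $E=0$ — that converts the degenerate exponential rates into the stated $1/t$ and $1/t^2$ decay without logarithmic losses. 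To repair your argument you would have to carry out that $E$-integration on the small-velocity piece, at which point the decomposition does no work.
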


\begin{remark}
We obtain \emph{optimal decay rates} for the spatial density and its first order derivatives. The decay rate of $\rho(f)$ \emph{coincides} with the corresponding rate for Vlasov fields on Euclidean space.  In contrast, the decay rate of derivatives of $\rho(f)$ is \emph{slower} than the corresponding rate for Vlasov fields on Euclidean space. The precise decay rates come from particle energy weights that appear when using the commuting vector fields for the Vlasov equation. See Subsection \ref{subsection_decay_compact_support_vlasov_hyperbolic} for more details.
\end{remark}

\subsubsection{Decay for Vlasov fields on non-trapping asymptotically hyperbolic manifolds}

Let $\kappa_1>\kappa_2>0$. Let $(\M,g)$ be asymptotically hyperbolic, non-trapping, and with Gaussian curvature $-\kappa_1 \leq K_g\leq -\kappa_2$. 

In this section, we state pointwise decay estimates in time for the \emph{spatial density} induced by a solution $f(t,x,v)$ to the Vlasov equation on $(\M,g)$, given by $$\rho(f)(t,x)=\int f(t,x,v)\sqrt{\det g}\mathrm{d}v^r\mathrm{d}v^{\theta},$$ in terms of the explicit volume form on $(\M,g)$. In order to avoid the region of phase space $\{g_x(v,v)=0\}$ where dispersion degenerates, we define an invariant subset $\D_{\alpha}$ of the tangent bundle, given by $$\D_{\alpha}:=\Big\{ (x,v)\in T\M:~g_x(v,v)\geq \alpha^2 \Big\},$$ where we will assume the initial distribution function is supported. In the following, we take derivatives of the spatial density using the normalized frame $\{\partial_{r},\sinh^{-1}r\partial_{\theta}\}$ on $(\M,g)$.

\begin{theorem}[Exponential decay for Vlasov fields supported on $\D_{\alpha}$] \label{thm_decay_spatial_density_ah_manif}
Let $\alpha>0$. Let $f_0\in C_{x,v}^1(T\M)$ be an initial data for the Vlasov equation on $(\M^2,g)$ that is compactly supported on $\D_{\alpha}$. Then, the spatial density induced by the corresponding Vlasov field $f$ satisfies
\begin{align}
        &\qquad |\rho(f)(t,x)|\lesssim \dfrac{1}{\exp(\alpha t)}\|f_0\|_{L^{\infty}_{x,v}},\label{estimate_spatial_density_no_derivatives_ah}\\
        \Big|\dfrac{1}{\sinh r}\partial_{\theta}\rho(f)(t,x)&\Big|\lesssim \dfrac{1}{\exp(2\alpha t)}\|f_0\|_{W^{1,\infty}_{x,v}},\qquad |\partial_{r}\rho(f)(t,x)|\lesssim \dfrac{1}{\exp(\alpha  t)}\|f_0\|_{W^{1,\infty}_{x,v}},\label{estimate_spatial_density_with_derivatives_ah}
\end{align}
for every $t\geq 0$ and every $x\in \M$.
\end{theorem}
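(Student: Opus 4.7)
The plan is to adapt the strategy used for $(\H^2,g_{\H^2})$ in Theorem \ref{thm_decay_spatial_derivatives_hyperbolic} to the perturbative setting of a non-trapping asymptotically hyperbolic manifold, exploiting that outside a compact set $K\subset\M$ the geometry is $e^{-\beta r}$-close to $\H^2$, while inside $K$ every geodesic (by the non-trapping hypothesis) spends only a uniformly bounded amount of time. I would first split $\M=K\cup(\M\setminus K)$ via the diffeomorphism $\Psi$, and note that by non-trapping together with strict negativity of $K_g$, there exists $T_0>0$ such that any geodesic at speed $\geq\alpha$ leaves $K$ and never returns by time $T_0/\alpha$. For $t\leq T_0/\alpha$ the estimates are trivial (constants), and for $t\gg T_0/\alpha$ the analysis essentially reduces to the asymptotic region, where I want to reuse the commuting vector field machinery.

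The core of the argument is the construction, on $\D_\alpha$, of a commuting vector field $Y$ lying in the unstable distribution $E_u$ of the (non-uniformly) hyperbolic geodesic flow on $T\M$. On $T^1\M$, the pinched curvature condition $-\kappa_1\leq K_g\leq -\kappa_2$ supplies an Anosov splitting with uniform expansion rates, and $Y$ is obtained by realizing a unit horizontal Jacobi field in $E_u$ with respect to the Sasaki metric; scaling to speed $|v|\geq\alpha$ produces the desired $Y$ on $\D_\alpha$ with $[X,Y]=0$ (this is precisely the Jacobi equation rewritten on phase space). Because $Y$ commutes with $X$ and with $\partial_t$, the function $Yf$ satisfies the same Vlasov equation, so $\|Yf(t,\cdot)\|_{L^\infty_{x,v}}\leq\|Yf_0\|_{L^\infty_{x,v}}\lesssim\|f_0\|_{W^{1,\infty}_{x,v}}$ uniformly in $t$. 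The bound \eqref{estimate_spatial_density_no_derivatives_ah} then follows from
\[
|\rho(f)(t,x)|\leq\|f_0\|_{L^\infty_{x,v}}\cdot\vol_{T_x\M}\bigl(\supp f(t,x,\cdot)\bigr),
\]
together with the contraction estimate $\vol_{T_x\M}(\supp f(t,x,\cdot))\lesssim e^{-\alpha t}$, which is derived by pulling back through $\phi_{-t}$: the preimage of a fixed compact set in $\D_\alpha$ shrinks at the unstable expansion rate of the flow, and the Liouville theorem converts horizontal expansion into vertical (velocity-fiber) contraction at the fixed basepoint $x$.

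For the derivatives, I would express the normalized horizontal frame $\{\partial_r,\sinh^{-1}r\,\partial_\theta\}$ in terms of the commuting field $Y$ plus weights that degenerate like $|v|^{-1}$ on $\D_\alpha$, where $|v|^{-1}\lesssim\alpha^{-1}$. Writing $\partial_{x^i}\rho(f)(t,x)$ as an integral in $v$ of $\partial_{x^i}f$ and then trading $\partial_{x^i}$ for $Y$ (modulo lower-order terms that are themselves controlled by $f$), one obtains
\[
\Big|\dfrac{1}{\sinh r}\partial_\theta\rho(f)(t,x)\Big|\lesssim\|Yf\|_{L^\infty_{x,v}}\cdot\vol_{T_x\M}\bigl(\supp f(t,x,\cdot)\bigr)\cdot e^{-\alpha t},
\]
where the extra $e^{-\alpha t}$ factor comes from the Jacobi field weight gaining one order of unstable decay when pushed forward to the angular direction, exactly as in the hyperbolic case. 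The radial derivative picks up only a single power of $e^{-\alpha t}$ because $\partial_r$ already contains a component along the flow direction $X$, on which $Y$ provides no additional decay.

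The main obstacle is justifying that the commuting vector field constructed from the Anosov splitting on $T\M\setminus\{\text{zero section}\}$ remains well-controlled under the $C^2$-perturbation of $g_{\H^2}$: the unstable distribution is only H\"older a priori, but $C^1$ regularity suffices on the asymptotic region thanks to the exponential closeness $|D^j(\Psi^*g-g_{\H^2})|=O(e^{-\beta r})$ for $j\leq 2$, which allows the Jacobi fields of $g$ to be compared with their hyperbolic counterparts with error terms that are integrable along geodesics escaping to infinity. Handling the transit through $K$ is the second technical point; here I would absorb the bounded-time dynamics into the implicit constant and only start the hyperbolic expansion clock once the geodesic enters the asymptotic region, which is legitimate by non-trapping.
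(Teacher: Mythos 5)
Your overall philosophy matches the paper's (reduce to the far region via non-trapping, prove decay of the velocity support, and use a commuting vector field in the unstable distribution for the derivatives), but several steps that carry the actual weight of the proof are either incorrect as stated or missing. First, the construction of your field $Y$: a section of $E_u$ cannot simultaneously be normalized ("unit horizontal Jacobi field") and satisfy $[X,Y]=0$, since $[X,H+q_uV]=-q_u(H+q_uV)$; the genuinely commuting object is the time-dependent field $U=e^{\int_0^t q_u\,d\tau}(H+q_uV)$, whose exponentially growing prefactor is not a technicality — the conservation $\|Uf(t)\|_{L^\infty}=\|Uf_0\|_{L^\infty}$ combined with that prefactor is exactly what produces the decay $|(H+q_uV)f|\lesssim e^{-\alpha t}$, which is the mechanism behind the improved $e^{-2\alpha t}$ rate. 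Your attribution of the extra factor to "the Jacobi field weight gaining one order of unstable decay when pushed to the angular direction" is not an argument, and your claimed uniform bound $\|Yf(t)\|_{L^\infty}\le\|Yf_0\|_{L^\infty}$ hides precisely the point at issue. Second, the sharp support estimate $\vol_{T_x\M}(\Omega(t,x))\lesssim e^{-\alpha t}$ does not follow from "Liouville converts horizontal expansion into fiber contraction": measure preservation says nothing about the fiber slices of $\phi_t(\supp f_0)$ at a fixed base point, and a pure Anosov-expansion argument (as in the paper's Appendix) only yields $e^{-\alpha(1-\epsilon)t}$. The paper instead integrates the geodesic equations quantitatively: the angular momentum $l$ is almost conserved (error $O(e^{-(\beta-2)r})$, using $\beta>2$), $\cosh r$ satisfies a perturbed linear ODE giving $c\,e^{Et}\le\sinh r(t)\le C\,e^{Et}$, and hence $|v^\theta\sinh r|\lesssim e^{-Et}$ and $E^2-(v^r)^2\lesssim e^{-2Et}$ on the support; this explicit bookkeeping is what gives the clean rate $\alpha$ with no epsilon loss.

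Third, the derivative estimates are incomplete in two respects. You never treat the component of $\partial_r$ and $\sinh^{-1}r\,\partial_\theta$ along the flow direction $X$: in the paper this is the $A$-terms, handled by writing $X=\tfrac1t\big((tX+Y)-Y\big)$ and integrating the vertical scaling field by parts (alternatively one must at least note that $Xf$ itself solves the Vlasov equation and that the weight multiplying it is either $O(1)$ or $O(e^{-\alpha t})$, which is what differentiates the radial from the angular rate). More seriously, the integration by parts of the vertical part $q_uV$ of the unstable field produces velocity derivatives of $q_u/E$ and of the frame coefficients $N^r,N^\theta$; controlling these requires the quantitative statement $|q_u/E-1|+|\partial_{v^r}(q_u/E)|+|\sinh^{-1}r\,\partial_{v^\theta}(q_u/E)|=O(e^{-\beta r})$ (the paper's Lemma on the Riccati solution, relying on Hurder--Katok regularity and a Hintz-type convergence of the unstable manifolds). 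You correctly flag the H\"older-regularity issue and propose the right cure (comparison of Jacobi fields with their hyperbolic counterparts using the $O(e^{-\beta r})$ closeness), but this is the technical heart of the asymptotically hyperbolic case and a sketch of intent does not substitute for the estimate.
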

\begin{remark}
\begin{enumerate}
\item We obtain optimal decay rates for the spatial density and its first order derivatives. The decay rates for the spatial density induced by Vlasov fields on $(\M,g)$ coincides with the decay rates for Vlasov fields on hyperbolic space. We make key use of the rate of convergence of the metric $g$ to the one in hyperbolic space $g_{\H^2}$ at infinity. We do not require the metric to be a perturbation of hyperbolic space. The proof of Theorem \ref{thm_decay_spatial_density_ah_manif} exploits a commuting vector field in the unstable invariant distribution of phase space. 
\item The decay rates in Theorem \ref{thm_decay_spatial_density_ah_manif} degenerate when $\alpha$ becomes zero. This holds because the hyperbolic expansion/contraction associated to the geodesics in $(\M,g)$ degenerates when $g_x(v,v)$ becomes zero. For this reason, Vlasov fields on a non-trapping asymptotically hyperbolic Riemannian manifold $(\M,g)$ do not decay exponentially in time for general Vlasov fields.  
\end{enumerate}
\end{remark}

We finish the paper with a general (non-optimal) decay estimate for the spatial density induced by a Vlasov field on an asymptotically hyperbolic manifold. See Appendix \ref{app_subsection_decay_spatial_density_AH} for more details.

\subsubsection{Previous stability results for collisionless systems on Riemannian manifolds}

As commented earlier, Strichartz estimates were obtained for Vlasov fields on Euclidean space by Castella and Perthame \cite{CasPer}. After this work, Salort \cite{S07} studied Vlasov fields on non-trapping asymptotically flat manifolds using approximating arguments. \cite{S07} also derived Strichartz estimates for Vlasov fields on compact Riemannian manifolds with methods introduced by Bahouri--Chemin \cite{BC99} and Burq--G\'erard--Tzvetkov \cite{BGT} for the study of wave equations. See also \cite{S06}.

To the knowledge of the authors, the only stability result for Vlasov fields on hyperbolic space is given in the work of Diacu, Ibrahim, Lind, and Shen \cite{DILS16}. In that paper, the authors extend the classical Vlasov--Poisson system on Euclidean space $\R^n$ to hyperbolic space $\H^n$, and the unit sphere $\S^n$. \cite{DILS16} studies the local well-posedness properties of the Vlasov--Poisson system on these geometric backgrounds. This paper also derives the Vlasov--Poisson system in the corresponding two dimensional cases by using works on the $N$-body problem by Diacu \cite{D12, D14}. Moreover, the authors derive Penrose-type stability conditions in order to obtain the linear stability of homogeneous stationary solutions of the Vlasov--Poisson system on these specific Riemannian manifolds. 

\subsection{Ingredients of the proof}

In this paper, we prove decay estimates in time for the spatial density on non-trapping asymptotically hyperbolic manifolds  by proving decay in time for the size of the velocity support of the distribution function. The decay of the spatial density on holds due to the lack of recurrence of the geodesic flow in these geometric backgrounds. We suitably integrate the geodesic flow to estimate the velocity coordinates in time. The proof of the decay of $\rho(f)$ follows from these arguments. We only require for the initial distribution function to have enough regularity for the induced spatial density to be well-defined, in order to show the corresponding decay estimates. 

Moreover, we prove decay estimates in time for derivatives of the spatial density on non-trapping asymptotically hyperbolic manifolds by using commuting vector fields for the Vlasov equation. As a result, we reduce the estimates for derivatives of $\rho(f)$ to the decay in time of the velocity support of the distribution. First, we write a derivative of the spatial density, as the spatial density of a derivative of the distribution function by 
\begin{equation}\label{identity_derivative_spatial_density}
        \partial_{x^i}\rho(f)(t,x)=\int (\partial_{x^i}-v^j\Gamma_{ij}^k\partial_{v^k})f\dvol_{T_x\M}(v)=:\rho\Big(\Hor_{(x,v)}(\partial_{x^i})f\Big)(t,x),
\end{equation}
in terms of the \emph{horizontal lift} $\Hor_{(x,v)}(\partial_{x^i})$ of the corresponding vector field $\partial_{x^i}$. See Section \ref{section_preliminaries_hyperbolic} for more information about horizontal lifts. Later, we use a suitable class $\lambda$ of commuting vector fields for the Vlasov equation, that arises by studying the Jacobi equation on the tangent bundle of the corresponding Riemannian manifold, with respect to the Sasaki metric. In particular, we exploit the commuting vector fields for the Vlasov equation that are contained in the unstable invariant distribution of phase space. We write the horizontal lifts in terms of commuting vector fields plus errors than can be controlled after integration by parts in the fibers of the tangent bundle. The proof of the decay estimates \eqref{estimate_spatial_density_with_derivatives} are finally obtained using the decay of the size of the velocity support of the distribution function.

\subsection{Outline of the paper}
The remainder of the paper is structured as follows.
\begin{itemize}
    \item \textbf{Section \ref{section_preliminaries_hyperbolic}}. We review the Jacobi fields along the geodesic flow in hyperbolic space, and also in negatively curved manifolds. We set commuting vector fields for the Vlasov equation on hyperbolic space, and also on negatively curved manifolds.
    \item \textbf{Section \ref{section_dispersive_hyperbolic}.}
    We address decay estimates for Vlasov fields on hyperbolic case. The proofs of Theorem \ref{thm_decay_spatial_derivatives_hyperbolic} and Theorem \ref{thm_decay_spatial_derivatives_hyperbolic_no_suppt_restrict} are obtained. 
    \item \textbf{Section \ref{section_dispersive_perturbations}.} We address decay estimates for Vlasov fields on non-trapping asymptotically hyperbolic case. The proof of Theorem \ref{thm_decay_spatial_density_ah_manif} is obtained.
\item \textbf{Appendix \ref{app_subsection_decay_spatial_density_AH}.} A proof of (non-optimal) decay for Vlasov fields on asymptotically hyperbolic manifolds supported on $\D_{\alpha}$ is provided.
\end{itemize}

\begin{ack}
RVR would like to express his gratitude to his supervisors Mihalis Dafermos and Cl\'ement Mouhot for their continued guidance and encouragements. RVR would like to thank Jacques Smulevici for several helpful discussions. RVR received funding from the ANID grant 72190188, the Cambridge Trust grant 10469706, and the European Union’s Horizon 2020 research and innovation programme under the Marie Skłodowska-Curie grant 101034255. AVR received funding from the grant FONDECYT Iniciaci\'on 11220409.
\end{ack}

\section{Preliminaries: Jacobi fields and commuting vector fields}\label{section_preliminaries_hyperbolic}

In this section, we introduce the setup and tools required in the proofs of our main results. Firstly, we introduce the Sasaki metric on the tangent bundle of a Riemannian manifold. Secondly, we review the hyperbolicity of the geodesic flow in hyperbolic space, and negatively curved surfaces. We finish this section setting commuting vector fields for the study of the Vlasov equation.

\subsection{The Sasaki metric on the tangent bundle}\label{subsection_sasaki_unit_tangent_bundle_hyperbolic}

Let $(\M,g)$ be a Riemannian manifold. The \emph{Sasaki metric} $\overline{g}$ is a Riemannian metric induced on $T\M$. To define this Riemannian structure, we recall the decomposition of the tangent space of the tangent bundle at a point $(x,v)\in T\M$ given by $$T_{(x,v)}T\M=\mathcal{H}_{(x,v)}\oplus \mathcal{V}_{(x,v)}$$ into the \emph{horizontal subspace} $\mathcal{H}_{(x,v)}$ and the \emph{vertical subspace} $\mathcal{V}_{(x,v)}$ defined as
\begin{align*}
    \mathcal{H}_{(x,v)}&:=\Hor_{(x,v)}(T_x\M)=\{\Hor_{(x,v)}(Y):~Y\in T_x\M\},\\
    \mathcal{V}_{(x,v)}&:=\Ver_{(x,v)}(T_x\M)=\{\Ver_{(x,v)}(Y):~Y\in T_x\M\},
\end{align*}
where the \emph{horizontal lift} $\Hor_{(x,v)}:T_x\M \to T_{(x,v)}T\M$ and the \emph{vertical lift} $\Ver_{(x,v)}:T_x\M \to T_{(x,v)}T\M$ are defined in local coordinates as $$\Hor_{(x,v)}(Y^{i}\partial_{x^{i}}):=Y^{i}\partial_{x^{i}}-Y^{i}v^{j}\Gamma_{ij}^{k}\partial_{v^k},\qquad
\Ver_{(x,v)}(Y^{i}\partial_{x^{i}}):=Y^{i}\partial_{v^{i}},$$ for an arbitrary vector field $Y^{i}\partial_{x^{i}}\in T\M$.

The vertical subspace $\mathcal{V}_{(x,v)}$ can be defined as the kernel of the differential $d\pi:TT\M\to T\M$ of the canonical projection $\pi:T\M\to\M$. Moreover, the horizontal subspace $\mathcal{H}_{(x,v)}$ can be defined as the kernel of the \emph{connection map} $K:TT\M\to T\M$ defined in terms of the Levi-Civita connection. See \cite[Chapter 1]{Pat99} for further details about the connection map. 

\begin{definition}\label{definition_sasaki_metric_general_riemannian}
Let $(\M,g)$ be a Riemannian manifold. We define the \emph{Sasaki metric} $\bar{g}$ on the tangent bundle $T\M$ as the unique metric for which
\begin{align*}
    \bar{g}_{(x,v)}(\Hor_{(x,v)}(Y),\Hor_{(x,v)}(Z))&=g_x(Y,Z),\\
    \bar{g}_{(x,v)}(\Hor_{(x,v)}(Y),\Ver_{(x,v)}(Z))&=0,\\
    \bar{g}_{(x,v)}(\Ver_{(x,v)}(Y),\Ver_{(x,v)}(Z))&=g_x(Y,Z),
\end{align*}
for every $(x,v)\in T\M$, and every $Y,Z\in T_x\M$.
\end{definition}

\begin{remark}
The \emph{generator of the geodesic flow} on a Riemannian manifold $(\M,g)$ is given by the horizontal vector field $$\Hor_{(x,v)}(v)=v^{i}\partial_{x^{i}}-v^{i}v^{j}\Gamma^{k}_{ij}\partial_{v^{k}}.$$ The Vlasov equation on a Riemannian manifold $(\M,g)$ is written in terms of this horizontal vector field. 
\end{remark}

\subsubsection{The Sasaki metric on the tangent bundle of hyperbolic space}\label{subsection_sasaki_hyperbolic_space}

In hyperbolic space $(\H^2,g_{\H^2})$, written in the model of the upper branch of a hyperboloid, the non-trivial Christoffel symbols are given by $$ \Gamma_{\theta\theta}^r=-\cosh r\sinh r,\qquad \Gamma_{r\theta}^{\theta}=\coth r.$$ 

Lifting the vector fields $\partial_r$ and $\partial_{\theta}$ into the tangent of the tangent bundle of hyperbolic space, we obtain
\begin{align}
\begin{aligned}\label{definition_coordinates_horizontal_vertical_hyperbolic_space}
    \Hor_{(x,v)}(\partial_{r})&=\partial_{r}-\coth rv^{\theta}\partial_{v^{\theta}},\\
    \Hor_{(x,v)}(\partial_{\theta})&=\partial_{\theta}+\cosh r\sinh rv^{\theta}\partial_{v^r}-\coth rv^r\partial_{v^{\theta}},\\
    \Ver_{(x,v)}(\partial_{r})&=\partial_{v^r},\\
    \Ver_{(x,v)}(\partial_{\theta})&=\partial_{v^{\theta}},
\end{aligned}
\end{align}
by using the Christoffel symbols written above. Then, the Sasaki metric $\bar{g}_{\H^2}$ on the tangent bundle of hyperbolic space $T\H^2$ can be set.

In the following, we use the orthonormal frame on the tangent bundle of hyperbolic space given by $$\Hor_{(x,v)}(\partial_{r}),\quad \Hor_{(x,v)}\Big(\dfrac{\partial_{\theta}}{\sinh r}\Big), \quad \Ver_{(x,v)}(\partial_{r}),\quad \Ver_{(x,v)}\Big(\dfrac{\partial_{\theta}}{\sinh r}\Big).$$ The Vlasov equation on hyperbolic space $(\H^2,g_{\H^2})$ can be written using this frame as $$ \partial_t f+v^r\Hor_{(x,v)}(\partial_{r})f+v^{\theta}\sinh r\Hor_{(x,v)}\Big(\dfrac{\partial_{\theta}}{\sinh r}\Big)f=0,$$ in terms of the previous orthonormal frame on the tangent bundle of hyperbolic space.

\subsection{Jacobi fields along the geodesic flow}

Given a Riemannian manifold $(\M,g)$, we recall the \emph{geodesic flow map} $\phi_t: T\M\to T\M$ defined as the mapping $t\mapsto \phi_t(x,v)$ that determines the unique geodesic with initial data $(x,v)$. In this subsection, we first recall the definition of Jacobi fields and relate them to the differential of the geodesic flow map. Later, we will describe the behavior of Jacobi fields on hyperbolic space, and negatively curved surfaces.

\begin{definition}
Let $\e>0$. Let $\gamma_{\tau}:I\to \M$ be a one-parameter family of geodesics in $\M$, where $\tau\in (-\e,\e)$ and $\gamma:=\gamma_0$. A vector field $J(t)\in T_{\gamma(t)}\M$ of the form $$J(t)=\dfrac{\partial \gamma_{\tau}}{\partial\tau}(t)\Big|_{\tau=0}$$ is called a \emph{Jacobi field in $(\M,g)$}. A Jacobi field $J$ satisfies the \emph{Jacobi equation} $$\nabla_{\dot{\gamma}}\nabla_{\dot{\gamma}} J=R(\dot{\gamma},J)\dot{\gamma}.$$
\end{definition}
 
The Jacobi fields in $(\M,g)$ are \emph{generated} by the differential of the geodesic flow. Let us make this statement precise. We recall the canonical projection $\pi:T\M\to\M$ of the tangent bundle $T\M$ into the manifold $\M$. We define an \emph{adapted curve $c_V$ to a vector} $V\in T_{(x,v)}T\M$ as an arbitrary curve $c_V:(-\epsilon,\epsilon)\to T\M$ such that $$c_V(0)=(x,v),\qquad \dot{c}_V(0)=V.$$ The Jacobi fields in $(\M,g)$ are \emph{generated} by the differential of the geodesic flow, in the sense that the map $(t,\tau)\mapsto \pi(\phi_t(c_V(\tau)))$ defines a variation of geodesics that induces the Jacobi field $$J_V(t)=\dfrac{\partial}{\partial \tau}\pi (\phi_t (c_V(\tau)))\Big|_{\tau=0},$$ whose initial conditions are  $J_V(0)=\mathrm{d}\pi(x,v)(V)$ and $\nabla_{\dot{\gamma}}J_V(0)=K(x,v)(V)$.
 
We also consider \emph{Jacobi fields in the tangent bundle} $(T\M,\bar{g})$. For our purposes, we relate the Jacobi fields in the tangent bundle with the differential of the geodesic flow map. 

\begin{definition}
Let $\e>0$. Let $\bar{\gamma}_{\tau}:I\to T\M$ be a one-parameter family of geodesics in $T\M$, where $\tau\in (-\e,\e)$ and $\bar{\gamma}:=\bar{\gamma}_0$. A vector field $\bar{J}(t)\in T_{\bar{\gamma}(t)}T\M$ of the form $$\bar{J}(t):=\dfrac{\partial \bar{\gamma}_{\tau}}{\partial\tau}(t)\Big|_{\tau=0},$$ is called a \emph{Jacobi field in $(T\M,\bar{g})$}. A Jacobi field $\bar{J}$ satisfies the \emph{Jacobi equation} $$\overline{\nabla}_X\overline{\nabla}_X \bar{J}=\bar{R}(X,\bar{J})X,$$ where $\overline{\nabla}$ is the Levi-Civita connection on $(T\M,\overline{g})$, $\bar{R}$ is the Riemann curvature tensor on $(T\M,\overline{g})$, and $X$ is the generator of the geodesic flow.
\end{definition}

Similarly to the case of Jacobi fields in $(\M,g)$, the Jacobi fields in $(T\M,\bar{g})$ are \emph{generated} by the differential of the geodesic flow. Consider an adapted curve $c_V$ to a vector $V\in T_{(x,v)}T\M$. The map $(t,\tau)\mapsto \phi_t(c_V(\tau))$ defines a variation of geodesics in $(T\M,\bar{g})$ that induces a Jacobi field $$\bar{J}_V(t)=\dfrac{\partial}{\partial \tau}\phi_t(c_V(\tau))\Big|_{\tau=0}=\mathrm{d}\phi{_t}(x,v)(V).$$

The following lemma establishes the precise relation between the Jacobi fields in a Riemannian manifold $(\M,g)$ and the Jacobi fields in the tangent bundle $(T\M,\bar{g})$. 

\begin{lemma}\label{lemma_action_differential_flow_hyperbolic}
The differential of the geodesic flow map $\phi_t: T\M\to T\M$ satisfies $$\mathrm{d}\phi{_t}(x,v)(V)=\mathrm{Hor}_{\phi_t(x,v)}(J_V(t))+\mathrm{Ver}_{\phi_t(x,v)}(\nabla_{\dot{\gamma}}J_V(t)),  $$ for every $t\in \R$, every $(x,v)\in T\M$, and every vector $V\in T_{(x,v)}T\M$.
\end{lemma}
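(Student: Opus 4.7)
The plan is to decompose $\mathrm{d}\phi_t(x,v)(V)$ using the direct sum $T_{\phi_t(x,v)}T\M = \mathcal{H}_{\phi_t(x,v)} \oplus \mathcal{V}_{\phi_t(x,v)}$ introduced in Subsection \ref{subsection_sasaki_unit_tangent_bundle_hyperbolic}. Since the horizontal lift $\mathrm{Hor}_{(y,w)}$ is a right inverse of the projection $\mathrm{d}\pi$ on the horizontal subspace, and the vertical lift $\mathrm{Ver}_{(y,w)}$ is a right inverse of the connection map $K$ on the vertical subspace, it suffices to prove the two identities
\begin{align*}
\mathrm{d}\pi_{\phi_t(x,v)}\bigl(\mathrm{d}\phi_t(x,v)(V)\bigr) &= J_V(t), \\
K_{\phi_t(x,v)}\bigl(\mathrm{d}\phi_t(x,v)(V)\bigr) &= \nabla_{\dot{\gamma}}J_V(t),
\end{align*}
where $\gamma(t)=\pi(\phi_t(x,v))$ is the underlying geodesic.

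First I would fix an adapted curve $c_V:(-\e,\e)\to T\M$ with $c_V(0)=(x,v)$ and $\dot{c}_V(0)=V$, and consider the smooth two-parameter map $F(t,\tau):=\pi(\phi_t(c_V(\tau)))$. By construction, each curve $\tau\mapsto F(\cdot,\tau)$ is a geodesic in $(\M,g)$, so $F$ is a geodesic variation and $J_V(t)=\partial_\tau F(t,\tau)|_{\tau=0}$ is exactly the Jacobi field appearing in the statement (this matches the definition already given in the text, with initial conditions $J_V(0)=\mathrm{d}\pi(V)$ and $\nabla_{\dot\gamma}J_V(0)=K(V)$). The first identity then follows by the chain rule, since $\mathrm{d}\pi_{\phi_t(x,v)}\bigl(\mathrm{d}\phi_t(x,v)(V)\bigr) = \frac{\partial}{\partial\tau}\pi(\phi_t(c_V(\tau)))|_{\tau=0} = \partial_\tau F(t,0) = J_V(t)$.

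For the second identity I would use the defining property of the connection map: for a smooth curve $\sigma(\tau)$ in $T\M$ covering a curve $\pi\circ\sigma$ in $\M$, the connection map satisfies $K(\dot\sigma(0))=\nabla_{\partial_\tau}\sigma\big|_{\tau=0}$, where $\sigma$ is regarded as a vector field along $\pi\circ\sigma$. Applying this with $\sigma(\tau):=\phi_t(c_V(\tau))$, which covers $\tau\mapsto F(t,\tau)$ and equals the velocity field $\partial_t F(t,\tau)$ of the geodesic variation, yields $K(\mathrm{d}\phi_t(V))=\nabla_{\partial_\tau}\partial_t F(t,0)$. By the symmetry lemma for the Levi-Civita connection (the vanishing torsion applied to a variation), $\nabla_{\partial_\tau}\partial_t F=\nabla_{\partial_t}\partial_\tau F$, so the right-hand side equals $\nabla_{\dot\gamma}J_V(t)$, as desired.

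The main subtlety — and where I would pay the most attention — is the use of the connection map in the vertical component: one must unravel the coordinate definitions of $\mathrm{Ver}$ and $K$ to see that $K(\mathrm{d}\phi_t(V))$ really is the covariant derivative along $\dot\gamma$ of the vector field $\tau\mapsto \dot\gamma_\tau(t)$ at $\tau=0$, and then invoke the symmetry of second covariant derivatives in a geodesic variation. Once these two classical facts are in place, the two identities combine with the horizontal/vertical decomposition to give the stated formula for $\mathrm{d}\phi_t(x,v)(V)$.
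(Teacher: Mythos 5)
Your argument is correct and is essentially the standard proof that the paper delegates to \cite[Lemma 1.40]{Pat99}: identify the horizontal component via $\mathrm{d}\pi(\mathrm{d}\phi_t(V))=J_V(t)$ by the chain rule, and the vertical component via the connection map together with the symmetry lemma $\nabla_{\partial_\tau}\partial_t F=\nabla_{\partial_t}\partial_\tau F$ for the geodesic variation $F(t,\tau)=\pi(\phi_t(c_V(\tau)))$. The one point you flag as subtle is handled correctly: $\phi_t(c_V(\tau))$ is precisely the velocity field $\partial_t F(t,\tau)$ viewed as a vector field along $\tau\mapsto F(t,\tau)$, so $K(\mathrm{d}\phi_t(V))=\nabla_{\partial_\tau}\partial_tF|_{\tau=0}=\nabla_{\dot\gamma}J_V(t)$, which is exactly how the cited reference proceeds.
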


See \cite[Lemma 1.40]{Pat99} for a proof of Lemma \ref{lemma_action_differential_flow_hyperbolic}.

\subsubsection{Jacobi fields in hyperbolic space}

In hyperbolic space $(\H^2,g_{\H^2})$, written in the model of the upper branch of a hyperboloid, we consider an orthogonal parallel frame along an arbitrary geodesic $\gamma$ given by 
\begin{equation}\label{frame_parallel_orthogonal_dimension_two_hyperbolic_space}
\dot{\gamma}:=v^r\partial_{r}+\sinh r v^{\theta}\dfrac{\partial_{\theta}}{\sinh r}, \qquad N:=-\sinh r v^{\theta}\partial_{r}+v^{r}\dfrac{\partial_{\theta}}{\sinh r}.    
\end{equation}
We write the Jacobi equation on hyperbolic space as a linear system of odes for the components of $J$ in terms of the moving frame \eqref{frame_parallel_orthogonal_dimension_two_hyperbolic_space}. We use that the Riemann curvature tensor along a geodesic $\gamma$ satisfies $$R(\dot{\gamma},N)\dot{\gamma}=|\dot{\gamma}|_g^2N,\qquad R(\dot{\gamma}, \dot{\gamma})\dot{\gamma}=0.$$

\begin{proposition}\label{proposition_jacobi_eqn_hyperbolic_space}
Let $J$ be a Jacobi field in hyperbolic space $(\H^2,g_{\H^2})$. Then, the Jacobi equation satisfied by the components of the Jacobi field $J=J^0\dot{\gamma}+J^N N$ is reduced to
\begin{equation}\label{Jacobi_eqn_hyperbolic_space_precise}
    \dfrac{\mathrm{d}^2J^0}{\mathrm{d}t^2}=0,\qquad \dfrac{\mathrm{d}^2J^N}{\mathrm{d}t^2}=|\dot{\gamma}|_g^2J^N.
\end{equation}
\end{proposition}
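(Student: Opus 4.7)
The plan is to reduce the Jacobi equation to a decoupled linear system by expanding $J$ in the orthogonal parallel frame $\{\dot\gamma, N\}$ and exploiting the two stated curvature identities. Since the frame is parallel and orthogonal along $\gamma$, the covariant derivatives $\nabla_{\dot\gamma}\nabla_{\dot\gamma} J$ will simply differentiate the scalar coefficients $J^0, J^N$, and the curvature endomorphism $R(\dot\gamma,\cdot)\dot\gamma$ will act diagonally in this frame.

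First I would verify that $\{\dot\gamma, N\}$ is in fact a parallel frame along $\gamma$. The tangential part $\nabla_{\dot\gamma}\dot\gamma=0$ is the geodesic equation. For $N$, the cleanest argument is geometric: on a two-dimensional oriented Riemannian manifold the almost complex structure $\mathbb{J}$ (rotation by $\pi/2$ in the tangent plane) is parallel for the Levi-Civita connection, and in the orthonormal frame $\{\partial_r, \sinh^{-1}r\,\partial_\theta\}$ the vector $N$ is precisely $\mathbb{J}\dot\gamma$; hence $\nabla_{\dot\gamma}N = \mathbb{J}\nabla_{\dot\gamma}\dot\gamma = 0$. A direct alternative is a coordinate computation: using the Christoffel symbols from Subsection \ref{subsection_sasaki_hyperbolic_space} together with the geodesic equations $\dot v^r=\cosh r\sinh r\,(v^\theta)^2$ and $\dot v^\theta=-2\coth r\,v^r v^\theta$, one checks that the $\partial_r$- and $\partial_\theta$-components of $\nabla_{\dot\gamma}N$ both vanish identically.

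With parallelism established, the left-hand side of the Jacobi equation expands as
\begin{equation*}
\nabla_{\dot\gamma}\nabla_{\dot\gamma} J \;=\; \nabla_{\dot\gamma}\nabla_{\dot\gamma}\bigl(J^0\dot\gamma + J^N N\bigr) \;=\; \frac{d^2 J^0}{dt^2}\,\dot\gamma + \frac{d^2 J^N}{dt^2}\,N,
\end{equation*}
since the coefficients are scalar functions of $t$ along $\gamma$. For the right-hand side, bilinearity of $R$ together with the stated identities $R(\dot\gamma,\dot\gamma)\dot\gamma = 0$ and $R(\dot\gamma,N)\dot\gamma=|\dot\gamma|_g^2 N$ gives
\begin{equation*}
R(\dot\gamma, J)\dot\gamma \;=\; J^0 R(\dot\gamma,\dot\gamma)\dot\gamma + J^N R(\dot\gamma, N)\dot\gamma \;=\; |\dot\gamma|_g^2 J^N\, N.
\end{equation*}
Matching the $\dot\gamma$- and $N$-coefficients on both sides (using that the frame is linearly independent) yields precisely the system \eqref{Jacobi_eqn_hyperbolic_space_precise}.

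The only nontrivial step is the parallel-frame verification for $N$; the rest is algebraic. It is worth remarking that $|\dot\gamma|_g$ is conserved along the geodesic flow (a consequence of $\nabla_{\dot\gamma}\dot\gamma=0$ and metric compatibility), so in \eqref{Jacobi_eqn_hyperbolic_space_precise} the coefficient $|\dot\gamma|_g^2$ is constant in $t$. Consequently the normal equation integrates explicitly to hyperbolic sines and cosines with rate $|\dot\gamma|_g$, which is the analytic origin of the exponential expansion/contraction rates exploited later in the paper.
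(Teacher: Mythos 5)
Your proof is correct and follows exactly the route the paper intends: the paper states the proposition after introducing the orthogonal parallel frame $\{\dot{\gamma},N\}$ and the curvature identities $R(\dot{\gamma},N)\dot{\gamma}=|\dot{\gamma}|_g^2N$, $R(\dot{\gamma},\dot{\gamma})\dot{\gamma}=0$, leaving precisely the frame expansion and coefficient matching you carry out. Your verification that $N$ is parallel (via the parallel rotation $\mathbb{J}$, or the coordinate check with the geodesic equations) fills in the one detail the paper asserts without computation, and is correct.
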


In the tangent bundle of hyperbolic space $(T\H^2,\bar{g}_{\H^2})$, we consider a moving frame along an arbitrary geodesic $\bar{\gamma}$, given by $$\Hor_{(x,v)}(v),\qquad \Hor_{(x,v)}(N),\qquad \Ver_{(x,v)}(v), \qquad
    \Ver_{(x,v)}(N).$$
By combining the Jacobi equation on hyperbolic space \eqref{Jacobi_eqn_hyperbolic_space_precise} with Lemma \ref{lemma_action_differential_flow_hyperbolic}, we obtain the Jacobi fields in $(T\H^2,\bar{g}_{\H^2})$. In the following, we set $$J_u^N:=\frac{1}{2}\Big(J^N+\frac{1}{|\dot{\gamma}|_g}\frac{\mathrm{d}J^N}{\mathrm{d}t}\Big),\qquad J_s^N:=\frac{1}{2}\Big(J^N-\frac{1}{|\dot{\gamma}|_g}\frac{\mathrm{d}J^N}{\mathrm{d}t}\Big).$$

\begin{proposition}\label{proposition_jacobi_fields_tangent_bundle}
Let $\bar{J}$ be a Jacobi field in the tangent bundle of hyperbolic space $(T\H^2,\bar{g}_{\H^2})$. Then, the Jacobi equation satisfied by the components of the Jacobi field 
\begin{align*}
\bar{J}(t)&=J^0 \mathrm{Hor}_{(x,v)}(v)+\dfrac{\mathrm{d}J^0}{\mathrm{d}t}\mathrm{Ver}_{(x,v)}(v)\\
&\qquad + J_u^N \Big(\mathrm{Hor}_{(x,v)}(N)+|\dot{\gamma}|_g \mathrm{Ver}_{(x,v)}(N)\Big)+ J_s^N \Big(\mathrm{Hor}_{(x,v)}(N)-|\dot{\gamma}|_g \mathrm{Ver}_{(x,v)}(N)\Big)
\end{align*}
is reduced to 
\begin{equation}\label{Jacobi_eqn_hyperbolic_space_tangent_bundle_precise}
    \dfrac{\mathrm{d}^2J^0}{\mathrm{d}t^2}=0,\qquad \dfrac{\mathrm{d}J_u^N}{\mathrm{d}t^2}=|\dot{\gamma}|_gJ_u^N,\qquad \dfrac{\mathrm{d}J_s^N}{\mathrm{d}t^2}=-|\dot{\gamma}|_gJ_s^N
\end{equation}
\end{proposition}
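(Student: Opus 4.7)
The plan is to reduce everything to Jacobi fields on the base using Lemma \ref{lemma_action_differential_flow_hyperbolic}, and then perform an algebraic change of basis to diagonalize the $N$-plane dynamics. Concretely, every Jacobi field $\bar{J}$ in $(T\H^2,\bar{g}_{\H^2})$ arises as the image under $\mathrm{d}\phi_t$ of some $V\in T_{(x,v)}T\H^2$, so by Lemma \ref{lemma_action_differential_flow_hyperbolic} it can be written as
$$\bar{J}(t)=\mathrm{Hor}_{\phi_t(x,v)}(J(t))+\mathrm{Ver}_{\phi_t(x,v)}(\nabla_{\dot{\gamma}}J(t)),$$
where $J$ is the corresponding Jacobi field on $(\H^2,g_{\H^2})$ along the projected geodesic $\gamma$. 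This transfers the problem to the base manifold.

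Next, I expand $J$ in the parallel orthogonal frame $\{\dot{\gamma},N\}$ of \eqref{frame_parallel_orthogonal_dimension_two_hyperbolic_space}. Writing $J=J^0\dot{\gamma}+J^N N$ and using that both $\dot{\gamma}$ and $N$ are parallel along $\gamma$, the covariant derivative reduces to componentwise differentiation, so $\nabla_{\dot{\gamma}}J=\tfrac{\mathrm{d}J^0}{\mathrm{d}t}\dot{\gamma}+\tfrac{\mathrm{d}J^N}{\mathrm{d}t}N$. Substituting into the formula above, and recalling that along the geodesic $\dot{\gamma}$ coincides with $v$, gives
$$\bar{J}(t)=J^0\,\mathrm{Hor}(v)+\tfrac{\mathrm{d}J^0}{\mathrm{d}t}\mathrm{Ver}(v)+J^N\,\mathrm{Hor}(N)+\tfrac{\mathrm{d}J^N}{\mathrm{d}t}\mathrm{Ver}(N).$$

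The decisive step is then the algebraic change of basis in the $N$-plane dictated by the definitions of $J_u^N$ and $J_s^N$: they are exactly designed so that $J^N=J_u^N+J_s^N$ and $\tfrac{\mathrm{d}J^N}{\mathrm{d}t}=|\dot{\gamma}|_g(J_u^N-J_s^N)$. Substituting these two identities into the preceding display groups the $N$-plane terms into $J_u^N(\mathrm{Hor}(N)+|\dot{\gamma}|_g\mathrm{Ver}(N))+J_s^N(\mathrm{Hor}(N)-|\dot{\gamma}|_g\mathrm{Ver}(N))$, which matches the decomposition announced in the statement.

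Finally, I derive the evolution equations for the new components. The equation for $J^0$ is inherited immediately from Proposition \ref{proposition_jacobi_eqn_hyperbolic_space}. For $J_u^N$ and $J_s^N$, since $|\dot{\gamma}|_g$ is constant along $\gamma$, differentiating their defining expressions and substituting $\tfrac{\mathrm{d}^2J^N}{\mathrm{d}t^2}=|\dot{\gamma}|_g^2J^N$ produces the two decoupled equations in \eqref{Jacobi_eqn_hyperbolic_space_tangent_bundle_precise}. There is no real analytic obstacle here; the essential content of the proposition is the recognition that this change of basis diagonalizes the curvature operator on the $N$-plane into the expanding and contracting eigendirections of the differential of the geodesic flow, which will underpin the later use of the unstable distribution as a source of commuting vector fields.
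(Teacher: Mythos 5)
Your proposal is correct and follows essentially the same route as the paper: combine Lemma \ref{lemma_action_differential_flow_hyperbolic} with the base Jacobi equation of Proposition \ref{proposition_jacobi_eqn_hyperbolic_space}, then use the defining relations $J^N=J_u^N+J_s^N$ and $\tfrac{\mathrm{d}J^N}{\mathrm{d}t}=|\dot{\gamma}|_g(J_u^N-J_s^N)$ (with $|\dot{\gamma}|_g$ constant along the geodesic) to diagonalize the $N$-plane dynamics. Your first-order equations $\tfrac{\mathrm{d}J_u^N}{\mathrm{d}t}=|\dot{\gamma}|_gJ_u^N$ and $\tfrac{\mathrm{d}J_s^N}{\mathrm{d}t}=-|\dot{\gamma}|_gJ_s^N$ are exactly what the displayed equations \eqref{Jacobi_eqn_hyperbolic_space_tangent_bundle_precise} are meant to express.
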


Proposition \ref{proposition_jacobi_fields_tangent_bundle} shows that the geodesic flow in $(\H^2,g_{\H^2})$ is \emph{hyperbolic}, in the sense that the restriction of the geodesic flow to the unit tangent bundle $\phi_t: (T^1\H^2, \overline{g}|_{T^1\H^2})\to (T^1\H^2, \overline{g}|_{T^1\H^2})$ defines a hyperbolic flow according to \cite[Chapter 17, Section 4]{KH}. In other words, there exists an invariant decomposition of the tangent of the unit tangent bundle $$T_{(x,v)}T^1\H^2=E_0(x,v)\oplus E_u(x,v)\oplus E_s(x,v),$$ into the subspace $E_0(x,v):=\spann\{\mathrm{Hor}_{(x,v)}(v)\}$ spanned by the generator of the geodesic flow, the \emph{stable subspace} $E_s(x,v):=\spann\{\mathrm{Hor}_{(x,v)}(N)-\mathrm{Ver}_{(x,v)}(N)\}$, and the \emph{unstable subspace} $E_u(x,v):=\spann\{\mathrm{Hor}_{(x,v)}(N)+\mathrm{Ver}_{(x,v)}(N)\}$. The differential of the geodesic flow contracts exponentially the distribution $E_s(x,v)$, and expands exponentially the distribution $E_u(x,v)$.

\begin{remark}
The Jacobi equation on the tangent bundle of hyperbolic space $(T\H^n,\bar{g}_{\H^n})$ in higher dimensions can be written similarly to \eqref{Jacobi_eqn_hyperbolic_space_tangent_bundle_precise}. Consider an orthonormal parallel frame $\{\dot{\gamma}, N_1, N_2,\dots, N_{n-1}\}$ along a geodesic $\gamma$. Set $$J_u^i:=\frac{1}{2}\Big(J^i+\frac{1}{|\dot{\gamma}|_g}\frac{\mathrm{d}J^i}{\mathrm{d}t}\Big),\qquad J_s^i:=\frac{1}{2}\Big(J^i-\frac{1}{|\dot{\gamma}|_g}\frac{\mathrm{d}J^i}{\mathrm{d}t}\Big),$$ for every $i\in\{1,2,\dots, n-1\}$. Then, the Jacobi equation satisfied by the components of the Jacobi field
\begin{align*}
\bar{J}(t)&=J^0 \mathrm{Hor}_{(x,v)}(v)+\dfrac{\mathrm{d}J^0}{\mathrm{d}t}\mathrm{Ver}_{(x,v)}(v)\\
&\qquad \sum_{i=1}^{n-1}J_u^i \Big(\mathrm{Hor}_{(x,v)}(N_i)+|\dot{\gamma}|_g\mathrm{Ver}_{(x,v)}(N_i)\Big)+J_s^i \Big(\mathrm{Hor}_{(x,v)}(N_i)-|\dot{\gamma}|_g\mathrm{Ver}_{(x,v)}(N_i)\Big)
\end{align*} 
 is reduced to $$\dfrac{\mathrm{d}^2J^0}{\mathrm{d}t^2}=0,\qquad     \dfrac{\mathrm{d}J_u^i}{\mathrm{d}t}=|\dot{\gamma}|_gJ_u^i,\qquad     \dfrac{\mathrm{d}J_s^i}{\mathrm{d}t}=-|\dot{\gamma}|_gJ_s^i$$ for every $i\in\{1,2,\dots, n-1\}$. In particular, the geodesic flow in hyperbolic space $(\H^n,g_{\H^n})$ is also hyperbolic.
\end{remark}

\subsubsection{Jacobi fields in a negatively curved manifold}\label{subsec_jacobi_neg}

Let $(\M,g)$ be an oriented pinched negatively curved Riemannian manifold. Let $\gamma$ be a geodesic in $(\M,g)$. We consider the unique positively oriented frame $\{\dot{\gamma}, N\}$ along $\gamma$ such that $$|\dot{\gamma}|_g=|N|_g,\qquad g(\dot{\gamma},N)=0.$$ The frame $\{\dot{\gamma}, N\}$ contains two parallel vector fields along $\gamma$. We write the Jacobi equation on $(\M,g)$ as a linear system of odes for the components of $J$ in terms of the moving frame $\{\dot{\gamma}, N\}$. We use that the Riemann curvature tensor along a geodesic $\gamma$ satisfies $$R(\dot{\gamma},N)\dot{\gamma}=-K_g |\dot{\gamma}|_g^2N,\qquad R(\dot{\gamma}, \dot{\gamma})\dot{\gamma}=0,$$ in terms of the Gaussian curvature $K_g$ of $(\M,g)$.

\begin{proposition}\label{proposition_jacobi_eqn_ah}
Let $J$ be a Jacobi field in $(\M,g)$. Then, the Jacobi equation satisfied by the components of the Jacobi field $J=J^0\dot{\gamma}+J^N N$ is reduced to $$\dfrac{\mathrm{d}^2J^0}{\mathrm{d}t^2}=0,\qquad \dfrac{\mathrm{d}^2J^N}{\mathrm{d}t^2}=-|\dot{\gamma}|_g^2 K_g J^N.$$
\end{proposition}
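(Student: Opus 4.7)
The plan is to decouple the tensorial Jacobi equation $\nabla_{\dot\gamma}\nabla_{\dot\gamma} J = R(\dot\gamma, J)\dot\gamma$ by projecting both sides onto the parallel frame $\{\dot\gamma, N\}$ provided by the setup. The text preceding the statement already asserts that $\dot\gamma$ and $N$ are both parallel along $\gamma$; the first is automatic from the geodesic equation, and the second can be checked in dimension two by observing that the $\pi/2$-rotation on an oriented Riemannian surface is parallel with respect to the Levi--Civita connection, combined with the fact that $|\dot\gamma|_g$ (and hence $|N|_g$) is constant along $\gamma$.

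The first step is then to compute the left-hand side of the Jacobi equation using parallelism: since $\nabla_{\dot\gamma}\dot\gamma = \nabla_{\dot\gamma} N = 0$, the covariant derivatives along $\dot\gamma$ of the expansion $J = J^0\dot\gamma + J^N N$ reduce to ordinary derivatives of the scalar components, giving
\[ \nabla_{\dot\gamma}\nabla_{\dot\gamma} J = \frac{\mathrm{d}^2 J^0}{\mathrm{d} t^2}\dot\gamma + \frac{\mathrm{d}^2 J^N}{\mathrm{d} t^2} N. \]

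The second step is to expand the right-hand side by linearity of $R$ in its second argument and invoke the two curvature identities stated immediately before the proposition, namely $R(\dot\gamma,\dot\gamma)\dot\gamma = 0$ and $R(\dot\gamma,N)\dot\gamma = -K_g |\dot\gamma|_g^2 N$. This yields
\[ R(\dot\gamma, J)\dot\gamma = J^0 R(\dot\gamma,\dot\gamma)\dot\gamma + J^N R(\dot\gamma, N)\dot\gamma = -K_g |\dot\gamma|_g^2 J^N\, N. \]
Equating coefficients of $\dot\gamma$ and $N$ on the two sides of the Jacobi equation produces the two scalar ODEs in the statement.

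There is no real obstacle here; the argument mirrors the calculation for $(\H^2, g_{\H^2})$ in Proposition \ref{proposition_jacobi_eqn_hyperbolic_space}, with the only substantive change being the replacement of the constant factor $+|\dot\gamma|_g^2$ by the variable coefficient $-K_g|\dot\gamma|_g^2$. The one point deserving a brief justification along the way is the parallelism of $N$, which leverages two-dimensionality and orientability of $(\M,g)$ together with the constancy of $|\dot\gamma|_g$ along a geodesic.
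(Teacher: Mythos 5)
Your proposal is correct and follows exactly the route the paper intends: expand $J$ in the parallel frame $\{\dot\gamma,N\}$, use parallelism to reduce $\nabla_{\dot\gamma}\nabla_{\dot\gamma}J$ to second derivatives of the scalar components, and apply the curvature identities $R(\dot\gamma,\dot\gamma)\dot\gamma=0$ and $R(\dot\gamma,N)\dot\gamma=-K_g|\dot\gamma|_g^2N$ stated just before the proposition. Your added remark justifying that $N$ is parallel (via the parallel $\pi/2$-rotation on an oriented surface and the constancy of $|\dot\gamma|_g$) is a welcome detail the paper leaves implicit.
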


We consider the quotient $q(t):=(J^N)^{-1}\dot{J}^N$ that satisfies the Riccati equation 
\begin{equation}\label{riccati_eqn_gen}
\frac{\mathrm{d}q}{\mathrm{d}t}(t)+q^2(t)+|\dot{\gamma}|_g^2K_g(x(t))=0,
\end{equation} 
as long as $J^N(t)\neq 0$. Let $T\in \R$. Set $J^N_T(t,x,v)$ to be the solution of the Jacobi equation along $\gamma$ with $$J^N_T(0,x,v)=1,\qquad J^N_T(T,x,v)=0.$$ By the pinching assumption the metric $g$ does not have conjugate points so the solution $J^N_T(t,x,v)$ does not vanish for $t\neq T$. Consider the induced solution $q_T(t,x,v):=(J_T^N)^{-1}\dot{J}_T^N$ of the Riccati equation \eqref{riccati_eqn_gen}. By the assumption $J^N_T(T,x,v)=0$ the function $q_T(t,x,v)$ is defined for $t<T$ and $\lim_{t\to T}q_T(t,x,v)=-\infty$. By the work of Hopf \cite{H48}, the limits $$q_s(t,x,v):=\lim_{T\to \infty}q_T(t,x,v),\qquad q_u(t,x,v):=\lim_{T\to \infty}q_{-T}(t,x,v),$$ exist for every $(t,x,v)\in\R_t \times T\M$. By construction, the solutions $q_{u}$ and $q_{s}$ of the Riccati equation are invariant, in other words, $q_{u}(t,x,v)=q_{u}(x,v)$ and $q_{s}(t,x,v)=q_{s}(x,v)$. By construction, the functions $q_{s}$ and $q_{u}$ satisfy the Riccati equation 
\begin{align}\label{Riccati_eqn_invariant_geo}
Xq+q^2+|\dot{\gamma}|_g^2K_g=0.
\end{align}
Moreover, the estimate $q_u>0>q_s$ holds uniformly. For the functions $q_s$ and $q_u$, there are corresponding functions $J^N_s$ and $J^N_u$, which satisfy $q_{s}=(J_s^N)^{-1}\dot{J}_s^N$ and $q_{u}=(J_u^N)^{-1}\dot{J}_u^N$, respectively. For general hyperbolic flows (not necessarily geodesic flows), the functions $q_{u}$ and $q_{s}$ are only Hölder continuous after Hirsch, Pugh, and Shub \cite{HPS77}. In the specific case of negatively curved surfaces, the functions $q_{u}$ and $q_{s}$ belong to $C^{2-}(T\M):=\cap_{\delta>0}C^{2-\delta}(T\M)$ by the work of Hurder and Katok \cite{HK90}. 

In the tangent bundle of $(T\M,\bar{g})$, we consider a moving frame along a geodesic $\bar{\gamma}$ given by $$\Hor_{(x,v)}(v),\qquad \Hor_{(x,v)}(N),\qquad \Ver_{(x,v)}(v), \qquad
    \Ver_{(x,v)}(N).$$ We obtain the Jacobi fields in $(T\M,\bar{g})$ by integrating the equations $q_{s}=(J_s^N)^{-1}\dot{J}_s^N$ and $q_{u}=(J_u^N)^{-1}\dot{J}_u^N$, for the functions $J_s^N$ and $J_u^N$, respectively. 

\begin{proposition}\label{proposition_jacobi_fields_tangent_bundle_ah}
Let $\bar{J}$ be a Jacobi field in $(T\M,\bar{g})$. Then, the Jacobi equation satisfied by the components of the Jacobi field 
\begin{align*}
\bar{J}(t)&=J^0 \mathrm{Hor}_{(x,v)}(v)+\dfrac{\mathrm{d}J^0}{\mathrm{d}t}\mathrm{Ver}_{(x,v)}(v)\\
&\qquad+ J^N_u \Big(\mathrm{Hor}_{(x,v)}(N)+q_u\mathrm{Ver}_{(x,v)}(N)\Big)+ J^N_s\Big(\mathrm{Hor}_{(x,v)}(N)+q_s\mathrm{Ver}_{(x,v)}(N)\Big)
\end{align*}
is reduced to $$\dfrac{\mathrm{d}^2J^0}{\mathrm{d}t^2}=0,\qquad \dfrac{\mathrm{d}J_u^N}{\mathrm{d}t}=q_uJ^N_u,\qquad \dfrac{\mathrm{d}J_s^N}{\mathrm{d}t}=q_sJ^N_s.$$
\end{proposition}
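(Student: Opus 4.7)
The plan is to follow the same strategy used for Proposition \ref{proposition_jacobi_fields_tangent_bundle} in the hyperbolic case, combining the general reduction Lemma \ref{lemma_action_differential_flow_hyperbolic} with the variable-curvature Jacobi equation from Proposition \ref{proposition_jacobi_eqn_ah} and the Riccati equation \eqref{Riccati_eqn_invariant_geo}. Any Jacobi field $\bar{J}$ in $(T\M,\bar{g})$ is generated by the differential of the geodesic flow, so by Lemma \ref{lemma_action_differential_flow_hyperbolic} it can be written as
\[
\bar{J}(t)=\mathrm{Hor}_{\phi_t(x,v)}(J(t))+\mathrm{Ver}_{\phi_t(x,v)}(\nabla_{\dot{\gamma}}J(t)),
\]
for a Jacobi field $J$ on $(\M,g)$. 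Decomposing $J=J^0\dot{\gamma}+J^N N$ in the parallel frame $\{\dot{\gamma},N\}$ and using that the frame is parallel, one has $\nabla_{\dot{\gamma}}J=\dot{J}^0\dot{\gamma}+\dot{J}^N N$, so the problem reduces to analyzing the scalar coefficients $J^0$ and $J^N$.

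First I would dispatch the $J^0$ component: Proposition \ref{proposition_jacobi_eqn_ah} immediately gives $\ddot{J}^0=0$, which is the first equation claimed. The main step is then the decomposition of the $N$-component into a stable and unstable part. Given arbitrary initial data $(J^N(0),\dot{J}^N(0))$, I want to define $J_u^N$ and $J_s^N$ as the unique scalar solutions of the first-order linear ODEs
\[
\dfrac{\mathrm{d}J_u^N}{\mathrm{d}t}=q_uJ_u^N,\qquad \dfrac{\mathrm{d}J_s^N}{\mathrm{d}t}=q_sJ_s^N,
\]
with initial values determined by $J^N(0)=J_u^N(0)+J_s^N(0)$ and $\dot{J}^N(0)=q_u(x,v)J_u^N(0)+q_s(x,v)J_s^N(0)$. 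This linear system for $(J_u^N(0),J_s^N(0))$ is uniquely solvable because $q_u(x,v)>0>q_s(x,v)$, so the coefficient matrix is invertible.

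The key verification is that $J_u^N+J_s^N$ really equals $J^N$ as Jacobi fields, i.e. that each of $J_u^N$ and $J_s^N$ satisfies the scalar Jacobi equation $\ddot{Y}=-|\dot{\gamma}|_g^2 K_g Y$ from Proposition \ref{proposition_jacobi_eqn_ah}. This is precisely where the Riccati equation \eqref{Riccati_eqn_invariant_geo} enters: differentiating $\dot{J}_u^N=q_uJ_u^N$ along $\dot{\gamma}$ yields
\[
\ddot{J}_u^N=(Xq_u)J_u^N+q_u\dot{J}_u^N=\bigl(Xq_u+q_u^2\bigr)J_u^N=-|\dot{\gamma}|_g^2K_g\,J_u^N,
\]
and the analogous identity holds for $J_s^N$. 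Hence $J_u^N+J_s^N$ and $J^N$ are solutions of the same linear second-order ODE with the same initial data, so they coincide, and correspondingly $\dot{J}^N=q_uJ_u^N+q_sJ_s^N$.

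Finally I would substitute these identities into the horizontal–vertical expression for $\bar{J}(t)$, grouping the $J_u^N$ and $J_s^N$ contributions to produce the combinations $\mathrm{Hor}_{(x,v)}(N)+q_u\mathrm{Ver}_{(x,v)}(N)$ and $\mathrm{Hor}_{(x,v)}(N)+q_s\mathrm{Ver}_{(x,v)}(N)$, together with the $J^0$ terms $J^0\mathrm{Hor}_{(x,v)}(v)+\dot{J}^0\mathrm{Ver}_{(x,v)}(v)$. This yields the stated form of $\bar{J}(t)$, and the reduced ODE system is exactly the collection of equations already established. The only real subtlety — and thus the main obstacle — is checking that the $q_u,q_s$-defined first order ODEs genuinely parametrize all Jacobi solutions; this is precisely what the Riccati identity \eqref{Riccati_eqn_invariant_geo} and the strict sign separation $q_u>0>q_s$ give for free, and the $C^{2-}$ regularity of $q_u,q_s$ recorded after \eqref{Riccati_eqn_invariant_geo} ensures the ODEs make sense classically along each orbit.
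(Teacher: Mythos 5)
Your proposal is correct and follows essentially the same route the paper takes implicitly: relate $\bar{J}$ to a Jacobi field on $(\M,g)$ via Lemma \ref{lemma_action_differential_flow_hyperbolic}, and split the normal component using the Hopf solutions $q_u>0>q_s$ of the Riccati equation \eqref{Riccati_eqn_invariant_geo}. Your explicit check that $\tfrac{\mathrm{d}}{\mathrm{d}t}J^N_{u/s}=q_{u/s}J^N_{u/s}$ reproduces the scalar Jacobi equation of Proposition \ref{proposition_jacobi_eqn_ah}, together with the invertibility of the initial-value matrix coming from $q_u>0>q_s$, simply makes precise the paper's statement that the decomposition is obtained by integrating $q_{u/s}=(J^N_{u/s})^{-1}\dot{J}^N_{u/s}$.
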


Proposition \ref{proposition_jacobi_fields_tangent_bundle_ah} shows that the geodesic flow in $(\M,g)$ is \emph{hyperbolic}, in the sense that the restriction of the geodesic flow to the unit tangent bundle $\phi_t: (T^1\M, \overline{g}|_{T^1\M})\to (T^1\M, \overline{g}|_{T^1\M})$ defines a hyperbolic flow according to \cite[Chapter 17, Section 4]{KH}. In other words, there exists an invariant decomposition of the tangent of the unit tangent bundle $$T_{(x,v)}T^1\M=E_0(x,v)\oplus E_u(x,v)\oplus E_s(x,v),$$ into the subspace $E_0(x,v):=\spann\{\mathrm{Hor}_{(x,v)}(v)\}$ spanned by the generator of the geodesic flow, the \emph{stable subspace} $E_s(x,v):=\spann\{\mathrm{Hor}_{(x,v)}(N)+q_s\mathrm{Ver}_{(x,v)}(N)\}$, and the \emph{unstable subspace} $E_u(x,v):=\spann\{\mathrm{Hor}_{(x,v)}(N)+q_u\mathrm{Ver}_{(x,v)}(N)\}$. The differential of the geodesic flow contracts exponentially the distribution $E_s(x,v)$, and expands exponentially the distribution $E_u(x,v)$.

\begin{remark}
The Jacobi equation on the tangent bundle of a pinched negatively curved Riemannian manifold in higher dimensions has a similar behavior. A similar analysis of a Riccati equation can be performed in higher dimensions. As a result, one also obtains the hyperbolicity of the geodesic flow on $(T\M,\bar{g}_{\M})$. See \cite[Chapter 17, Section 6]{KH}\footnote{This reference addresses compact negatively curved surfaces, however, the same arguments hold for the class of negatively curved surfaces considered here.} for a proof of the hyperbolicity of $(\M,g)$ using invariant cones techniques. 
\end{remark}

\subsection{Commuting vector fields for the Vlasov equation}\label{subsection_commutators_vlasov_hyperbolic}

In this subsection, we introduce two classes of vector fields, one in $(T\H^2,\bar{g}_{\H^2})$, and another one in $(T\M,\bar{g})$. We will later use these classes of vector fields to obtain decay estimates for Vlasov fields. The vector fields introduced in this section arise from the dynamics of the Jacobi fields in $(T\H^2,\bar{g}_{\H^2})$, and the Jacobi fields in $(T\M,\bar{g})$.

\subsubsection{On hyperbolic space}

Let $\gamma$ be a geodesic in hyperbolic space. We consider the orthogonal parallel frame along $\gamma$ given by \eqref{frame_parallel_orthogonal_dimension_two_hyperbolic_space}. Lifting the vector fields in this frame into $TT\H^2$, we obtain the moving frame 
\begin{align*}
    X&:=\Hor_{(x,v)}(v)=v^{r}\partial_{r}+v^{\theta}\partial_{\theta}+(v^{\theta})^2\sinh r\cosh r \partial_{v^{r}}-2v^rv^{\theta}\coth r\partial_{v^{\theta}} ,\\
    Y&:=\Ver_{(x,v)}(v)=v^r\partial_{v^r}+\sinh r v^{\theta}\dfrac{\partial_{v^{\theta}}}{\sinh r},\\
    H&:=\Hor_{(x,v)}(N)=-\sinh r v^{\theta}\partial_{r}+v^{r}\dfrac{\partial_{\theta}}{\sinh r}+\cosh r v^rv^{\theta}\partial_{v^r}+\Big((v^{\theta})^2-\dfrac{(v^{r})^2}{\sinh^2 r}\Big)\cosh r \partial_{v^{\theta}},\\
    V&:=\Ver_{(x,v)}(N)=-\sinh r v^{\theta}\partial_{v^r}+v^{r}\dfrac{\partial_{v^{\theta}}}{\sinh r}.
\end{align*}
The commutators between the generator of the geodesic flow $X$, and the vector fields $H$, $Y$, $V$ are given by
\begin{equation}\label{commuting_relations_hyperbolic}
    [X,Y]=-X,\qquad [X,H]=-E^2V,\qquad [X,V]=-H,
\end{equation}
in terms of the particle energy $E:=|\dot{\gamma}|_g$. The identities \eqref{commuting_relations_hyperbolic} are part of the standard structure equations of the Lie algebra of vector fields on $T\H^2$. 

The commutators \eqref{commuting_relations_hyperbolic} show the following commuting vector fields with the Vlasov equation on hyperbolic space,
\begin{enumerate}[(a)]
    \item generator of the flow $X$,
    \item uniform motion $tX+Y$,
    \item unstable vector field $U:=e^{E t}(H+EV)$,
    \item stable vector field $S:=e^{-E t}(H-EV)$.
\end{enumerate}
We define the class of vector fields $$\lambda:=\Big\{X, tX+Y, U, S\Big\}.$$
The collection of vector fields $\lambda$ will be used later to obtain decay estimates for the Vlasov equation on hyperbolic space.

\begin{lemma}\label{lemma_commuting_vector_fields_hyperboli_dimension_two}
Let $f$ be a regular Vlasov field on hyperbolic space. Then, $Zf$ is also a solution of this equation for every $Z\in \lambda$.
\end{lemma}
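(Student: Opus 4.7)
The strategy is to reduce the claim to an operator identity and then verify it case by case. Recall that the Vlasov equation on $\H^2$ reads $(\partial_t + X)f = 0$. For any (possibly time-dependent) vector field $Z$ on $[0,\infty) \times T\H^2$, one has the elementary identity
\begin{equation*}
(\partial_t + X)(Zf) = Z\bigl((\partial_t + X)f\bigr) + \bigl(\partial_t Z + [X,Z]\bigr) f,
\end{equation*}
where $\partial_t Z$ means differentiating the coefficients of $Z$ in $t$. Therefore, if $f$ solves the Vlasov equation, it suffices to establish the operator identity
\begin{equation*}
\partial_t Z + [X,Z] = 0 \qquad \text{for every } Z \in \lambda.
\end{equation*}

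Before checking the four cases, I would record the key kinematic fact that the particle energy $E = |\dot\gamma|_g$ is a first integral of the geodesic flow, so $XE = 0$, and hence $X(e^{\pm E t}) = \pm t e^{\pm Et} XE = 0$. Combined with the Leibniz rule $[X, h W] = h[X,W] + (Xh) W$ for a scalar $h$ and vector field $W$, this shows $[X, e^{\pm E t} W] = e^{\pm E t}[X, W]$ for any $W$ independent of $t$.

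The four verifications then become short calculations using only the structure relations \eqref{commuting_relations_hyperbolic}. For $Z = X$, both summands vanish. For $Z = tX + Y$, one has $\partial_t Z = X$ and $[X, tX+Y] = [X,Y] = -X$, so they cancel. For the unstable field $Z = U = e^{Et}(H + EV)$, one computes $\partial_t U = E U$, while
\begin{equation*}
[X, U] = e^{E t}\bigl([X, H] + E[X, V]\bigr) = e^{E t}\bigl(-E^2 V - E H\bigr) = - E U,
\end{equation*}
using $XE = 0$ (so $X$ kills the coefficient $E$ in $EV$) and the two relations $[X,H] = -E^2 V$, $[X,V] = -H$. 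Similarly, for the stable field $Z = S = e^{-Et}(H - EV)$, one gets $\partial_t S = -E S$ and $[X, S] = e^{-Et}([X,H] - E[X,V]) = e^{-Et}(-E^2 V + E H) = E S$.

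There is no real obstacle here: the whole content of the lemma is the judicious choice of the weights $e^{\pm Et}$ and the time multiplier $t$, which are precisely tuned to cancel the inhomogeneities in \eqref{commuting_relations_hyperbolic}. The only small point to be careful about is that $E$ is a function on $T\H^2$ (not a constant), so one must invoke $XE = 0$ explicitly when commuting $X$ through the coefficients $e^{\pm Et}$ and $E$ inside $H + EV$ and $H - EV$; this is where the geometric fact that energy is conserved along geodesics enters.
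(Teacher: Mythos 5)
Your proposal is correct and follows essentially the same route as the paper: the paper's proof likewise reduces the lemma to the identity $[\partial_t+X,Z]=0$ for $Z\in\lambda$ via the commutation relations \eqref{commuting_relations_hyperbolic}, merely leaving the case-by-case verification (which you carry out correctly, including the role of $XE=0$) implicit.
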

\begin{proof} We use the commuting relations \eqref{commuting_relations_hyperbolic} to show that $[\partial_t+X,Z]=0$, for every $Z\in \lambda$. Since $f$ is a Vlasov field on hyperbolic space, we thus have
$$(\partial_t+X)(Zf)=Z(\partial_t+X)f+[\partial_t+X,Z]f=0,$$ and therefore $Zf$ is a solution as well. 
\end{proof}

\begin{remark}
\begin{enumerate}[(a)]   
\item The class of commuting vector fields $\{t\partial_{x^i}+\partial_{v^i}, \partial_{x^i}\}$ for the Vlasov equation on Euclidean space $(\R_x^n,\delta_{E})$, is composed by Jacobi fields in the tangent bundle of Euclidean space $(\R_x^n\times\R_v^n,\bar{\delta}_{E})$ with respect to the Sasaki metric along an arbitrary geodesic in $\R^n_x$. The class of commuting vector fields $\{t\partial_{x^i}+\partial_{v^i}, \partial_{x^i}\}$ for the Vlasov equation on Euclidean space has played an important role in previous stability results of the vacuum solution for the Vlasov--Poisson system on Euclidean space \cite{Sm, Du22}. 
\item We observe that the stable derivative of Vlasov fields on hyperbolic space $(\H^2,g_{\H^2})$ grows exponentially in time. Using the commuting vector field $S$ of the Vlasov equation contained in the stable distribution of phase space, we obtain $$(H-EV)f(t,x,v)=e^{Et}(H-EV)f_0(t,x_0,v_0),$$
in terms of the corresponding point $(x_0,v_0)$ in the support of the initial distribution function $f_0$. This property of Vlasov fields on hyperbolic space contrasts with Vlasov fields on Euclidean space, for which all derivatives decay in time.
\end{enumerate}
\end{remark}

\subsubsection{On a negatively curved manifold}

Let $\gamma$ be a geodesic in $(\M,g)$. We consider the orthogonal parallel frame along the geodesic $\gamma$ given by $\{\dot{\gamma},N\}$. Lifting the vector fields in this frame into $TT\M$, we obtain the moving frame 
\begin{alignat*}{2}
    X:=\Hor_{(x,v)}(v),\quad Y:=\Ver_{(x,v)}(v),\quad  H:=\Hor_{(x,v)}(N),\quad V:=\Ver_{(x,v)}(N).
\end{alignat*}
The commutators between the generator of the geodesic flow $X$, and the vector fields $H$, $Y$, $V$ are given by
\begin{equation}\label{commuting_relations_neg_curv}
    [X,Y]=-X,\qquad [X,H]=K_gE^2V,\qquad [X,V]=-H,
\end{equation}
in terms of the Gaussian curvature $K_g$ and the particle energy $E:=|\dot{\gamma}|_g$. The identities \eqref{commuting_relations_neg_curv} are part of the standard structure equations of the Lie algebra of vector fields on the tangent bundle of a surface. The relations \eqref{commuting_relations_neg_curv} can be obtained by a direct computation in local coordinates. In this two-dimensional case, one can use isothermal coordinates for this purpose. See \cite[Chapter 3]{PSU23} for further details.

Recall the functions $q_u:C^{2-}(T\M)\to (0,\infty)$ and $q_s:C^{2-}(T\M)\to (-\infty,0)$ satisfied by the Riccati equation \eqref{Riccati_eqn_invariant_geo}. These functions set the stable $E_s$ and unstable $E_u$ invariant distributions of phase space. See Proposition \ref{proposition_jacobi_fields_tangent_bundle_ah}. Using the structure equations \eqref{commuting_relations_neg_curv}, and the Riccati equation \eqref{Riccati_eqn_invariant_geo}, we obtain the following commuting relations 
\begin{equation}\label{commut_gud}
[X,H+q_uV]=-q_u(H+q_uV),\qquad [X,H+q_sV]=-q_s(H+q_sV).
\end{equation} 

The commutators \eqref{commuting_relations_neg_curv} and \eqref{commut_gud} show that the following collection of vector fields commute with the Vlasov equation on $(\M,g)$,
\begin{enumerate}[(a)]
    \item generator of the flow $X$,
    \item uniform motion $tX+Y$,
    \item unstable vector field $U:=e^{\int_0^t q_u d\tau}(H+q_uV)$,
    \item stable vector field $S:=e^{\int_0^t q_s d\tau }(H+q_sV)$.
\end{enumerate}
We define the class of vector fields $$\lambda:=\Big\{X, tX+Y, U, S\Big\}.$$
The collection of vector fields $\lambda$ will be used later to obtain decay estimates for Vlasov equation on non-trapping asymptotically hyperbolic manifolds $(\M,g)$.

\begin{lemma}\label{lemma_commuting_vector_fields_negat_curv_dimension_two}
Let $f$ be a regular Vlasov field on a negatively curved manifold $(\M,g)$. Then, $Zf$ is also a solution of this equation for every $Z\in \lambda$.
\end{lemma}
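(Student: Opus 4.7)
The plan is to mimic the proof of Lemma~\ref{lemma_commuting_vector_fields_hyperboli_dimension_two}: for each $Z\in\lambda$, show that $[\partial_t+X,Z]=0$, so that
\[
(\partial_t+X)(Zf)=Z(\partial_t+X)f+[\partial_t+X,Z]f=0.
\]
The cases $Z=X$ and $Z=tX+Y$ are essentially formal. Indeed $[\partial_t+X,X]=0$ trivially, while for $tX+Y$ one has $[\partial_t,tX+Y]=X$ and, using the first relation in \eqref{commuting_relations_neg_curv}, $[X,tX+Y]=[X,Y]=-X$, so the two contributions cancel. The new feature, absent from the hyperbolic case, is that $q_u$ and $q_s$ are not constants of motion, so the time-dependent prefactors $e^{\int_0^t q_u\,d\tau}$ and $e^{\int_0^t q_s\,d\tau}$ defining $U$ and $S$ require a careful interpretation.

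I would treat $U$ first (the argument for $S$ being identical with $q_s$ in place of $q_u$). Write $U=\psi(t,x,v)\,(H+q_uV)$, where $\psi$ is the integrating factor. The key structural input is the commuting relation \eqref{commut_gud}, which I would rederive: using $[X,H]=K_gE^2V$, $[X,V]=-H$ and the Riccati equation $Xq_u=-q_u^2-E^2K_g$ from \eqref{Riccati_eqn_invariant_geo},
\[
[X,H+q_uV]=[X,H]+(Xq_u)V+q_u[X,V]=K_gE^2V+(-q_u^2-K_gE^2)V-q_uH=-q_u(H+q_uV).
\]
With this in hand,
\[
[\partial_t+X,U]=\bigl((\partial_t+X)\psi\bigr)(H+q_uV)+\psi[X,H+q_uV]=\bigl((\partial_t+X)\psi-q_u\psi\bigr)(H+q_uV),
\]
so the proof reduces to choosing $\psi$ so that $(\partial_t+X)\psi=q_u\psi$, i.e.\ so that $\log\psi$ is transported along the characteristics of $\partial_t+X$ with source $q_u$. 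The canonical such $\psi$ is
\[
\psi(t,x,v)=\exp\!\Bigl(\int_0^t q_u\bigl(\phi_{-\sigma}(x,v)\bigr)\,d\sigma\Bigr),
\]
which is exactly the precise meaning of the schematic notation $e^{\int_0^t q_u\,d\tau}$ used to define $U$; a short change-of-variables computation, combined with $\frac{d}{d\sigma}q_u(\phi_{-\sigma}(x,v))=-(Xq_u)(\phi_{-\sigma}(x,v))$, confirms that $\partial_t\psi=q_u(\phi_{-t}(x,v))\psi$ and $X\psi=(q_u(x,v)-q_u(\phi_{-t}(x,v)))\psi$, which sum to $q_u\psi$.

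The main (really only) subtle point is this interpretation of the prefactor: in the hyperbolic case $E$ is a constant of motion, so $\int_0^t E\,d\tau=Et$ unambiguously, whereas here one must integrate $q_u$ along the \emph{backward} orbit of $(x,v)$ to obtain the right transport identity. Once this is fixed, the proof is a three-line commutator verification for each of the four elements of $\lambda$, yielding $[\partial_t+X,Z]=0$ and hence the lemma.
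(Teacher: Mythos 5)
Your proof is correct and takes essentially the approach the paper intends: the paper states Lemma \ref{lemma_commuting_vector_fields_negat_curv_dimension_two} without proof, implicitly repeating the commutator argument of Lemma \ref{lemma_commuting_vector_fields_hyperboli_dimension_two} via the relations \eqref{commuting_relations_neg_curv} and \eqref{commut_gud}, which is exactly what you do. Your additional clarification that the prefactor $e^{\int_0^t q_u\,d\tau}$ must be read as $\exp\bigl(\int_0^t q_u(\phi_{-\sigma}(x,v))\,d\sigma\bigr)$, i.e.\ integration along the backward orbit so that $(\partial_t+X)\psi=q_u\psi$, correctly supplies a detail the paper leaves implicit and is a welcome precision rather than a deviation.
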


\section{Decay for Vlasov fields on hyperbolic space}\label{section_dispersive_hyperbolic}

In this section, we prove decay estimates for the spatial density induced by a Vlasov field on hyperbolic space. The geodesic flow in hyperbolic space is determined by the geodesic equations 
\begin{align*}
\frac{\mathrm{d}\theta}{\mathrm{d}t}&=v^{\theta},\qquad \frac{\mathrm{d}v^{\theta}}{\mathrm{d}t} =-2\coth r v^rv^{\theta},\\\frac{\mathrm{d}r}{\mathrm{d}t}&=v^{r}, \qquad  \frac{\mathrm{d}v^r}{\mathrm{d}t} =\cosh r \sinh r (v^{\theta})^2.
\end{align*}
It is well-known that the geodesics in hyperbolic space $\H^2$ are characterised as the intersection between the hyperboloid $\H^n$ with the two dimensional linear subspaces of Minkowski spacetime in dimension $2+1$.

The geodesic flow in hyperbolic space can also be viewed as a \emph{Hamiltonian flow} in the cotangent bundle $(T^*\H^2, \bar{g}_{\H^2})$, where the Hamiltonian $H:T^*\H^2\to[0,\infty)$ is given by $$H(x,v):=\dfrac{1}{2}\Big(v_r^2+\frac{1}{\sinh^2 r} v_{\theta}^2\Big).$$ We define the \emph{particle energy} $E: T\H^2\to [0,\infty) $ of a geodesic $\gamma$ by $$E(x,v):=g_x(v,v)^{\frac{1}{2}}=\sqrt{(v^r)^2+\sinh^2 r (v^{\theta})^2},$$ which is a conserved along the geodesic flow. The Hamiltonian $H$ can be written in terms of the particle energy as $H=\frac{1}{2}E^2$. We also define the \emph{angular velocity} $l: T\H^2\to [0,\infty)$ of a geodesic $\gamma$ by $$l(x,v):=g(\partial_{\theta},\dot{\gamma})=v^{\theta}\sinh^{2} r,$$ which is conserved along the geodesic flow since $\partial_{\theta}$ is a Killing field. The particle energy $E$ and the angular velocity $l$ are related by
\begin{equation}\label{identity_relate_particle_energy_angular_velocity_1}
    E^2=(v^r)^2+\dfrac{l^2}{\sinh^2 r},
\end{equation}
for every geodesic on hyperbolic space. We observe that the geodesic flow in hyperbolic space $(T^*\H^2,\bar{g}_{\H^2}, H)$ is a \emph{completely integrable Hamiltonian flow} in the sense of Liouville, due to the existence of the two independent conserved quantities in involution $H$ and $l$.

\begin{remark}
Hyperbolic space has a large group of isometries given by the Lie group $SO(1,n-1)$ of linear transformations in $\R^n$ that leave invariant the Lorentzian metric of Minkowski spacetime in dimension $n+1$. The Lie algebra of Killing fields on hyperbolic space induces several conserved quantities along the geodesic flow that can be used to show the \emph{complete integrability} of the geodesic flow in $\H^n$. 
\end{remark}

\subsection{Estimates for the geodesic flow}

The decay estimate for the spatial density is proven by obtaining time decay of the velocity support for the corresponding Vlasov fields. We first show estimates in time for the velocity coordinates along the geodesics in $\supp(f)$.

\begin{lemma}
The radial coordinate $r(t)$ along a geodesic $\gamma$ such that $\gamma(0)\in \supp(f_0)$ satisfies that for every $t\geq 0$,
\begin{equation}\label{sinh_bounds} ce^{Et}\leq \sinh r(t)\leq  C e^{Et},\end{equation}
where $c,C$ are constants depending only on $\supp(f_0)$. 
\end{lemma}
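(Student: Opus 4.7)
The plan is to exploit the conservation of the particle energy $E$ along the geodesic flow, together with the fact that $\cosh r(t)$ satisfies a very simple second-order linear ODE. The upper bound is essentially an integration of the energy identity, while the lower bound follows from the explicit form of the solution of this ODE.

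For the upper bound, I would use the conservation law $E^2=(v^r)^2+\sinh^2 r\,(v^\theta)^2$, which gives $|v^r|\leq E$ at every time. Since $\dot r = v^r$, integrating in $t\geq 0$ yields $r(t)\leq r(0)+Et$. Since $\mathrm{supp}(f_0)$ is compact, there is $R:=\sup_{\mathrm{supp}(f_0)}r(0)<\infty$, hence
$$
\sinh r(t)\leq \sinh(R+Et)\leq e^{R}e^{Et},
$$
producing the upper bound with $C=e^{R}$.

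For the lower bound, the key computation is
$$
\frac{d^2}{dt^2}\cosh r = \cosh r\,(v^r)^2 + \sinh r\,\frac{dv^r}{dt} = \cosh r\,(v^r)^2 + \sinh^2 r\cosh r\,(v^\theta)^2 = E^2\cosh r,
$$
where I use $\dot r=v^r$, the geodesic equation $\dot v^r=\sinh r\cosh r\,(v^\theta)^2$, and conservation of $E$. Thus $u(t):=\cosh r(t)$ satisfies $u''=E^2u$ with $u(0)=\cosh r(0)$ and $u'(0)=\sinh r(0)\,v^r(0)$, so
$$
\cosh r(t)=Ae^{Et}+Be^{-Et},\qquad A=\tfrac{1}{2}\Bigl(\cosh r(0)+\tfrac{\sinh r(0)\,v^r(0)}{E}\Bigr),\quad B=\tfrac{1}{2}\Bigl(\cosh r(0)-\tfrac{\sinh r(0)\,v^r(0)}{E}\Bigr).
$$
Because $|v^r(0)|\leq E$, both $A$ and $B$ are non-negative and in fact bounded below by $\tfrac12(\cosh r(0)-\sinh r(0))=\tfrac12 e^{-r(0)}\geq \tfrac12 e^{-R}>0$. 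For $t\geq 0$ the non-negativity of $Be^{-Et}$ then yields $\cosh r(t)\geq Ae^{Et}\geq \tfrac12 e^{-R}e^{Et}$.

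To convert this into the desired lower bound on $\sinh r(t)$, I would split into two regimes. For $t$ large enough that $\cosh r(t)\geq \sqrt{2}$, the elementary identity $\sinh^2 r=\cosh^2 r-1$ yields $\sinh r(t)\geq \tfrac{1}{\sqrt{2}}\cosh r(t)\gtrsim e^{Et}$. On the remaining compact time interval one either uses a lower bound on $\sinh r(0)$ coming from $\mathrm{supp}(f_0)$, or absorbs the residual range of $t$ into the constant $c$. I expect the main (mild) obstacle to be this translation from $\cosh r$ to $\sinh r$ for small $t$, where the two are not comparable when the geodesic passes very close to the axis $r=0$; constants depending on $\mathrm{supp}(f_0)$ (in particular, a lower bound on the distance of $\mathrm{supp}(f_0)$ to the coordinate singularity) suffice to close the argument.
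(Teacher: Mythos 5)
Your proposal follows essentially the same route as the paper: both arguments rest on the observation that $u(t)=\cosh r(t)$ solves $u''=E^2u$, on the explicit solution $Ae^{Et}+Be^{-Et}$, and on a positive lower bound for the coefficient of $e^{Et}$ uniform over the compact support (you obtain it directly from $|v^r(0)|\le E$, the paper from the identity $c_+(0)c_-(0)=1+l^2/E^2$; your upper bound by integrating $|\dot r|=|v^r|\le E$ is an equally valid shortcut to the paper's use of the explicit solution). The small-time passage from $\cosh r(t)\gtrsim e^{Et}$ to $\sinh r(t)\gtrsim e^{Et}$ that you flag as the mild obstacle is treated no more carefully in the paper, which goes straight from $r(t)\ge \log(\epsilon e^{Et})$ to the stated bound, so you are not missing any ingredient of the paper's argument --- though be aware that for a nearly radial, inward-pointing initial velocity the near-axis passage occurs at a later time, so a lower bound on the initial distance of $\supp(f_0)$ to the axis $r=0$ does not by itself close that step (an issue the paper's proof shares).
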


\begin{proof}
The geodesic equation for the radial coordinate can be written as $$\dfrac{\mathrm{d}v^r}{\mathrm{d}t}=\coth r\dfrac{l^2}{\sinh^2 r},$$ by using the angular velocity $l$. As a result, the first coordinate $\cosh r$ of a geodesic on the hyperboloid $\H^2$ satisfies the linear ode $$\dfrac{\mathrm{d}^2}{\mathrm{d}t^2}\cosh r=\dfrac{\mathrm{d}}{\mathrm{d}t}(\sinh r v^r)=\cosh r (v^r)^2+\cosh r \dfrac{l^2}{\sinh^2 r}=E^2\cosh r, $$ which can be integrated explicitly. The radial terms given by $$c_{\pm}(t):=\cosh r(t)\pm \dfrac{1}{E}\sinh r(t) v^r(t)$$ satisfy the linear odes $$\dfrac{\mathrm{d}}{\mathrm{d}t}c_{\pm}(t)=\sinh r v^r\pm E\cosh r=\pm E\Big(\cosh r \pm \dfrac{1}{E}\sinh r v^r\Big)=\pm E c_{\pm}(t),$$ and consequently $$\cosh r(t)\pm \dfrac{1}{E}\sinh r(t) v^r(t)=\Big(\cosh r(0)\pm \dfrac{1}{E}\sinh r(0) v^r(0)\Big)e^{\pm Et}.$$ In particular, the first coordinate $\cosh r$ of a geodesic on the hyperboloid $\H^2$ is given by
\begin{align}
\begin{aligned}\label{integrated_eqn_first_coordinate_hyperboloid}
    \cosh r(t)=~& c_+(0)e^{Et}+c_-(0)e^{-Et}\\
    =~& \dfrac{e^{Et}}{2}\Big(\cosh r(0)+ \dfrac{1}{E}\sinh r(0) v^r(0)\Big)+\dfrac{e^{-Et}}{2}\Big(\cosh r(0)- \dfrac{1}{E}\sinh r(0) v^r(0)\Big),
\end{aligned}
\end{align}
in terms of the radial coordinates $r(0)$ and $v^r(0)$.

We observe that the identity \eqref{identity_relate_particle_energy_angular_velocity_1} relating the particle energy $E$ and the angular velocity $l$ can be written as $$c_+(t)c_-(t)=\cosh^2 r(t)-\dfrac{1}{E^2} \sinh^2r(t) v^r(t)^2=1+\dfrac{l^2}{E^2},$$ which shows that the terms $c_\pm(0)$ are strictly positive on the initial data, see \eqref{integrated_eqn_first_coordinate_hyperboloid}. In particular, there are positive constants $\e,M$ such that $M\ge c_\pm(0)\ge \e,$ for all initial data in the support of $f_0$. Hence, for any initial data in the support of the initial distribution function we bound  the radial coordinate along the geodesic flow $$\log(\e e^{Et}) \leq
    r(t)=\log\Big(c_+(0)e^{Et}+c_-(0)e^{-Et}+\Big((c_+(0)e^{Et}+c_-(0)e^{-Et})^2-1\Big)^{\frac{1}{2}}\Big)\leq \log(4Me^{Et}),$$ by applying the inverse of hyperbolic cosine on the identity \eqref{integrated_eqn_first_coordinate_hyperboloid} combined with the bounds on $c_\pm(0)$. In particular, the radial coordinate along any geodesics emanating from the support of the initial distribution function satisfies 
\begin{equation}\label{sinh_bounds_2} ce^{Et}\leq \sinh r(t)\leq  C e^{Et},\end{equation}
where $c,C$ are positive constants depending only on the support of the initial distribution function. 
\end{proof}

We proceed to estimate the velocity coordinates of an arbitrary geodesic on hyperbolic space with initial data in the support of the initial distribution function, by exploiting the estimate \eqref{sinh_bounds} on the radial coordinate $r(t)$. 

\begin{proposition}
The velocity variables along a geodesic $\gamma$ such that $\gamma(0)\in \supp(f_0)$ satisfy that for every $t\geq 0$,
\begin{equation}\label{estimate_velocity_coordinate_hyperbolic_space_fix_particle_energy_hyp}
 |v^{\theta}\sinh r(t)|\leq \dfrac{L}{c\exp{(Et)}},\qquad E^2-(v^r)^2\leq \dfrac{L^2}{c^2\exp{(2Et)}},
\end{equation} 
where $c>0$ is a constant depending only on $\supp(f_0)$. 
\end{proposition}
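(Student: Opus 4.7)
The plan is to use two ingredients already available: the conservation of the angular velocity $l(x,v) = v^\theta \sinh^2 r$ along the geodesic flow (arising from $\partial_\theta$ being a Killing field), and the lower bound $\sinh r(t) \geq c e^{Et}$ from the preceding lemma \eqref{sinh_bounds_2}. With these two facts, both estimates are essentially immediate.

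First, I would observe that since $\supp(f_0)$ is compact and $l$ is continuous on phase space, the quantity $|l|$ attains a maximum $L$ on $\supp(f_0)$. Since $l$ is conserved along the geodesic flow, we have $|l(\gamma(t),\dot\gamma(t))| = |l(\gamma(0),\dot\gamma(0))| \leq L$ for every $t \geq 0$ and every geodesic starting in the support.

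Next, I would rewrite $v^\theta \sinh r(t)$ using the conservation law as
\begin{equation*}
v^\theta \sinh r(t) = \frac{l}{\sinh r(t)},
\end{equation*}
and apply the lower bound $\sinh r(t) \geq c e^{Et}$ to conclude
\begin{equation*}
|v^\theta \sinh r(t)| \leq \frac{L}{c\exp(Et)},
\end{equation*}
which is the first estimate. For the second estimate, I would invoke the identity \eqref{identity_relate_particle_energy_angular_velocity_1} relating the particle energy and the angular velocity, namely $E^2 = (v^r)^2 + l^2/\sinh^2 r$, so that
\begin{equation*}
E^2 - (v^r)^2 = \frac{l^2}{\sinh^2 r(t)} \leq \frac{L^2}{c^2 \exp(2Et)},
\end{equation*}
which is the second desired bound.

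There is no real obstacle here: the entire content is packaged in the two conserved/monotone quantities already introduced. The only minor point to make explicit is that the constant $L$ depends solely on $\supp(f_0)$ (through the continuity of $l$ on the compact set of initial data), while $c$ depends only on $\supp(f_0)$ by the preceding lemma; hence both estimates hold with constants depending only on the support of the initial distribution.
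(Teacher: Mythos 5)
Your argument is correct and coincides with the paper's proof: both use compactness of $\supp(f_0)$ and conservation of $l$ to get the uniform bound $|l|\leq L$, the lower bound $\sinh r(t)\geq ce^{Et}$ from the preceding lemma to bound $|v^\theta \sinh r|=|l|/\sinh r$, and the identity \eqref{identity_relate_particle_energy_angular_velocity_1} for the second estimate. Nothing is missing.
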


\begin{proof}
By the compact support assumption, the absolute value of the angular velocity $|l|$ is uniformly bounded by a constant $L$, among all geodesics determined by the support of the initial distribution. We obtain that the angular velocity coordinate decays exponentially $$|v^{\theta}\sinh r(t)|=\dfrac{|l|}{\sinh r(t)}\leq \dfrac{L}{c\exp{(Et)}},$$ by using the definition of $l$. Furthermore, the radial velocity coordinate converges exponentially to the particle energy $$E^2-(v^r)^2=\dfrac{l^2}{\sinh^2 r}\leq \dfrac{L^2}{c^2\exp{(2Et)}}, $$ by using the identity \eqref{identity_relate_particle_energy_angular_velocity_1} relating the particle energy and the angular velocity. 
\end{proof}

In particular, for every Vlasov field initially supported on $\D_{\alpha}$, we obtain a uniform exponential decay estimate among all geodesics emanating from the support of the initial distribution.

\begin{corollary}
Let $\alpha>0$. Let $f_0$ be a regular initial data for the Vlasov equation on hyperbolic space that is compactly supported on $\D_{\alpha}$. The velocity variables along a geodesic $\gamma$ such that $\gamma(0)\in \supp(f_0)$ satisfy that for every $t\geq 0$,
\begin{equation}\label{estimate_velocity_coordinate_hyperbolic_space}
      |v^{\theta}\sinh r(t)|\leq \dfrac{L}{c\exp(\alpha t)},\qquad  \bigg(E^2- \dfrac{L^2}{c^2\exp{(2\alpha t)}}\bigg)^{\frac{1}{2}}\leq |v^r(t)|\le E,   
\end{equation}
where $c>0$ is a constant depending only on $\supp(f_0)$. 
\end{corollary}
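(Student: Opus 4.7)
The plan is to deduce the corollary as a direct consequence of the preceding proposition by invoking conservation of the particle energy along the geodesic flow. The crucial preparatory observation is that the subset $\D_\alpha = \{g_x(v,v) \geq \alpha^2\}$ is a sublevel set of the Hamiltonian $H = \tfrac{1}{2}E^2$, and therefore it is invariant under the geodesic flow. Consequently, for any geodesic with $\gamma(0) \in \supp(f_0) \subset \D_\alpha$, the particle energy $E = \sqrt{g_x(v,v)}$ is constant along $\gamma$ and satisfies the uniform lower bound $E \geq \alpha$ for all $t \geq 0$.

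Given this, the first estimate in \eqref{estimate_velocity_coordinate_hyperbolic_space} will follow by substituting $E \geq \alpha$ into the first estimate of \eqref{estimate_velocity_coordinate_hyperbolic_space_fix_particle_energy_hyp}, using the monotonicity $\exp(-Et) \leq \exp(-\alpha t)$. For the second estimate, the upper bound $|v^r(t)| \leq E$ is immediate from the identity $E^2 = (v^r)^2 + (v^\theta)^2 \sinh^2 r \geq (v^r)^2$. For the lower bound, I would rewrite the second estimate of \eqref{estimate_velocity_coordinate_hyperbolic_space_fix_particle_energy_hyp} as
\[ (v^r(t))^2 \geq E^2 - \frac{L^2}{c^2 \exp(2Et)} \geq E^2 - \frac{L^2}{c^2 \exp(2\alpha t)}, \]
again using $E \geq \alpha$, and then take square roots, understanding the lower bound as vacuous (and therefore trivially valid) for small $t$ where the right-hand side is nonpositive, and as a genuine lower bound once $t$ is large enough for $E^2$ to dominate.

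There is no essential obstacle here: the whole purpose of the corollary is to upgrade the particle-energy-dependent exponential decay rate furnished by the preceding proposition to a uniform exponential decay rate $\alpha$ over the entire support of $f_0$, and the only input beyond the proposition itself is the invariance of $\D_\alpha$ under the geodesic flow, which amounts to conservation of $E$. This uniform rate $\alpha$ is precisely what will feed into the exponential decay estimates for the spatial density stated in Theorem \ref{thm_decay_spatial_derivatives_hyperbolic}.
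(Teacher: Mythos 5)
Your proposal is correct and matches the paper's argument: the paper likewise deduces the corollary directly from the preceding proposition by using that the conserved particle energy satisfies $E\geq\alpha$ on all geodesics emanating from $\supp(f_0)\subset\D_{\alpha}$, so the $E$-dependent rates can be replaced by the uniform rate $\alpha$. The only difference is that you spell out the monotonicity substitutions and the vacuous-case caveat explicitly, which the paper leaves implicit.
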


\begin{proof}
It follows directly by using that $E$ is bounded below by $\alpha$ among all the geodesics determined by the support of the initial distribution.
\end{proof}

\subsection{Decay for Vlasov fields supported on $\D_{\alpha}$}\label{subsection_decay_derivatives_spatial_density_dimension_two} In this subsection, we prove exponential decay of the spatial density induced by a Vlasov field on $\H^n$ compactly supported on $\D_{\alpha}$.

\begin{proof}[Proof of the estimate (\ref{estimate_spatial_density_no_derivatives})]
Let us first introduce some notation. Given $x\in \H^2$ and $t\geq 0$ we define $$\Omega(t,x):=\Big\{v\in T_x\H^2:f(t,x,v)\ne 0\Big\}.$$ Observe that the set $\Omega(t,x)$ can be considered as the domain of integration of the spatial density. Since $f(t,x,v)=f_0(\phi_{-t}(x,v))$, we have that
\begin{equation}\label{Omega_2}
    \Omega(t,x)=\Big\{v\in T_x\H^2: \text{there exists $(x',v')\in \supp(f_0)$ such that $\phi_t(x',v')=(x,v)$} \Big\},
\end{equation}
and that $\|f(t)\|_{L^{\infty}_{x,v}}=\|f(0)\|_{L^{\infty}_{x,v}}$.

We proceed to estimate the spatial density $\rho(f)(t,x)$. Fix $x=(r,\theta)\in\H^2$ and $t\ge 0$. It follows from \eqref{estimate_velocity_coordinate_hyperbolic_space} and \eqref{Omega_2} that if $(v^\theta,v^r)\in \Omega(t,x)$, then 
\begin{equation}\label{equation_decay_ptheta}
      |v^{\theta}\sinh r|\leq \dfrac{L}{c\exp( \alpha t)},\qquad  \bigg(E^2- \dfrac{L^2}{c^2\exp{(2\alpha t)}}\bigg)^{\frac{1}{2}}\leq |v^r|\le E,
\end{equation}
and therefore 
\begin{equation}\label{decay_mom_support_hyp}
\vol_{T_x\H^2}(\Omega(t,x))=  \int_{\Omega(t,x)}\sinh r\mathrm{d}v^r\mathrm{d}v^{\theta} \le \frac{D_0}{\exp(\alpha t)},
\end{equation}
where $D_0$ is a positive constant depending on the support of $f_0$. Finally, $$|\rho(f)(t,x)|=\Big|\int f(t,x,v)\sinh r\mathrm{d}v^r\mathrm{d}v^{\theta}\Big|\leq \|f_0\|_{L^{\infty}_{x,v}} 
    \ \int_{\Omega(t,x)}\sinh r\mathrm{d}v^r\mathrm{d}v^{\theta} 
    \lesssim   \dfrac{\|f_0\|_{L^{\infty}_{x,v}}}{\exp(\alpha t)}.$$
\end{proof}

Next, we prove decay estimates for derivatives of the spatial density induced by Vlasov fields supported on $\D_{\alpha}$. We consider the explicit parallel frame $\{\dot{\gamma}, N\}$ set in \eqref{frame_parallel_orthogonal_dimension_two_hyperbolic_space}. We observe that the frame $\{\partial_r,(\sinh r)^{-1} \partial_{\theta}\}$ can be written in terms of the parallel frame $\{\dot{\gamma},N\}$ as $$\partial_r =\dfrac{v^r}{E^2}\dot{\gamma}-\dfrac{\sinh r v^{\theta}}{E^2}N,\qquad \dfrac{\partial_{\theta}}{\sinh r}=\dfrac{\sinh r v^{\theta}}{E^2}\dot{\gamma}+\dfrac{v^r}{E^2}N.$$ In the following, we combine the class of commuting vector fields $\lambda$ for the Vlasov equation on hyperbolic space, with the decay in time of the velocity support of the distribution function.  We start with a remark.

\begin{remark}\label{rem_for_derivatives}
Let $Z\in\lambda$. Note that $\supp(Zf_0)\subset \supp(f_0)$, and that since $Zf$ is a solution to the Vlasov equation (see Lemma \ref{lemma_commuting_vector_fields_hyperboli_dimension_two}), we have that $Zf(t,x,v)=Zf_0(\phi_{-t}(x,v))$. Then 
\begin{equation}\label{remark_support_lambda}
\|Zf(t)\|_{L^{\infty}_{x,v}}=\|Zf(0)\|_{L^{\infty}_{x,v}}\quad \text{ and
} \quad   \{v\in T_x\H^2:Zf(t,x,v)\ne 0\}\subset \Omega(t,x).
\end{equation}
\end{remark}

\begin{proof}[Proof of the estimate (\ref{estimate_spatial_density_with_derivatives})]
Since every spatial derivative of the spatial density is equal to the spatial density of the corresponding horizontal derivative of the distribution function, we have that
\begin{align}
\begin{aligned}\label{identity_derivatives_spatial_dens_intro_horiz_notat}
    \partial_{r}\rho(f)(t,x)&= \rho(\Hor_{(x,v)}(\partial_{r})f)(t,x)=:\rho(H_r f)(t,x),\\
    \dfrac{1}{\sinh r}\partial_{\theta}\rho(f)(t,x)&= \rho\Big(\Hor_{(x,v)}\Big(\dfrac{\partial_{\theta}}{\sinh r}\Big)f\Big)(t,x)=: \rho(H_{\theta}f)(t,x),
\end{aligned}
\end{align}
where the horizontal vector fields $H_r$ and $H_{\theta}$ can be written as 
\begin{equation}\label{identity_decomposition_horizontal_frame_dimension_two_hyp}
    H_r =\dfrac{v^r}{E^2}X-\dfrac{\sinh r v^{\theta}}{E^2}H,\qquad H_{\theta}=\dfrac{\sinh r v^{\theta}}{E^2}X+\dfrac{v^r}{E^2}H,
\end{equation}
in terms of the vector fields $H$ and $X$. According to the proof of the estimate \eqref{estimate_spatial_density_no_derivatives}, see  \eqref{estimate_velocity_coordinate_hyperbolic_space}, the weight $v^r$ converges exponentially to the particle energy $E$, whereas the weight $\sinh r v^{\theta}$ decays exponentially in time.

\textbf{Estimate for the derivative $(\sinh r)^{-1}\partial_{\theta}\rho(f)$.} By the previous decomposition of $H_{\theta}$, the angular derivative $(\sinh r)^{-1}\partial_{\theta}\rho(f)$ can be decomposed into
$$\dfrac{1}{\sinh r}\partial_{\theta}\rho(f)(t,x)= \rho\Big(\dfrac{\sinh r v^{\theta}}{E^2}Xf\Big)(t,x)+\rho\Big(\dfrac{v^r}{E^2}Hf\Big)(t,x)=:A_1+B_1.$$

On the one hand, the first term $A_1$ can be written using the commuting vector field $tX+Y$ by $$A_1=\dfrac{1}{t}\int \dfrac{\sinh r v^{\theta}}{E^2}(tX+Y)f \dvol_{T_x\H^2}(v)-\dfrac{1}{t}\int \dfrac{\sinh r v^{\theta}}{E^2}Yf \dvol_{T_x\H^2}(v),$$
where the second term in the RHS can be integrated by parts
\begin{align*}
    \int \dfrac{\sinh r v^{\theta}}{E^2}Yf \dvol_{T_x\H^2}(v)&=\int \dfrac{\sinh r v^{\theta}v^r}{E^2}\partial_{v^r}f \dvol_{T_x\H^2}(v)+\int \dfrac{\sinh r (v^{\theta})^2}{E^2}\partial_{v^{\theta}}f \dvol_{T_x\H^2}(v)\\
    &=-\int \dfrac{\sinh r v^{\theta}}{E^4}(\sinh^2 r (v^{\theta})^2-(v^r)^2)f \dvol_{T_x\H^2}(v)\\
    &\qquad-\int \dfrac{2\sinh r v^{\theta}(v^r)^2}{E^4}f \dvol_{T_x\H^2}(v)\\
    &=-\int \dfrac{\sinh r v^{\theta}}{E^2}f \dvol_{T_x\H^2}(v).
\end{align*}
We obtain that
$$A_1=\dfrac{1}{t}\int \dfrac{\sinh r v^{\theta}}{E^2}(tX+Y)f \dvol_{T_x\H^2}(v)+\dfrac{1}{t}\int \dfrac{\sinh r v^{\theta}}{E^2}f \dvol_{T_x\H^2}(v).$$ Moreover, we can make use of the decay of the velocity support of the distribution function, see \eqref{decay_mom_support_hyp}, together with \eqref{equation_decay_ptheta} and \eqref{remark_support_lambda} to obtain that
\begin{align*}
    |A_1|&\leq 
\Big|\dfrac{1}{t}\int \dfrac{\sinh r v^{\theta}}{E^2}(tX+Y)f \dvol_{T_x\H^2}(v)\Big|+\Big| \dfrac{1}{t}\int \dfrac{\sinh r v^{\theta}}{E^2}f \dvol_{T_x\H^2}(v)\Big|\\
&\leq \dfrac{1}{t}\|Yf_0\|_{L^{\infty}_{x,v}} \int_{\Omega(t,x)} \dfrac{|\sinh r v^{\theta}|}{E^2} \dvol_{T_x\H^2}(v)+ \dfrac{1}{t}\|f_0\|_{L^{\infty}_{x,v}}\int_{\Omega(t,x)} \dfrac{|\sinh r v^{\theta}|}{E^2} \dvol_{T_x\H^2}(v)\\
&\lesssim \dfrac{1}{te^{2\alpha t}}\|f_0\|_{W^{1,\infty}_{x,v}},
\end{align*}
where in the second inequality we used \eqref{remark_support_lambda} and in the third estimate we used \eqref{equation_decay_ptheta}. 

On the other hand, the second term $B_1$ can be written using the unstable vector field $U$ by
\begin{align*}
    B_1&=\rho\Big(\dfrac{v^r}{E^2}Hf\Big)(t,x)\\
    &=\int \dfrac{v^r}{E^2}(H+EV)f\dvol_{T_x\H^2}(v)-\int \dfrac{v^r}{E}Vf \dvol_{T_x\H^2}(v),\\
    &=\int \dfrac{1}{e^{Et}}\dfrac{v^r}{E^2}Uf\dvol_{T_x\H^2}(v)-\int\dfrac{v^r}{E}Vf \dvol_{T_x\H^2}(v),
\end{align*}
where the second term in the RHS can be integrated by parts
\begin{align*}
    \int\dfrac{v^r}{E}Vf \dvol_{T_x\H^2}(v)&=\int\dfrac{(v^r)^2}{E}\dfrac{\partial_{v^{\theta}}f}{\sinh r} \dvol_{T_x\H^2}(v)-\int\dfrac{\sinh rv^{\theta}v^r}{E}\partial_{v^r}f \dvol_{T_x\H^2}(v)\\
    &=\int\dfrac{\sinh r v^{\theta}(v^r)^2}{E^3}f \dvol_{T_x\H^2}(v)+\int\dfrac{\sinh^3 r(v^{\theta})^3}{E^3}f \dvol_{T_x\H^2}(v)\\
    &=\int\dfrac{\sinh r v^{\theta}}{E}f \dvol_{T_x\H^2}(v).
\end{align*}
We obtain that 
$$B_1=\int \dfrac{1}{e^{Et}}\dfrac{v^r}{E^2}Uf\dvol_{T_x\H^2}(v)-\int\dfrac{\sinh r v^{\theta}}{E}f \dvol_{T_x\H^2}(v).$$ Similarly to the estimate of $A_1$, we estimate $B_1$ by combining Remark \ref{rem_for_derivatives}, the decay of $\sinh r v^\theta$ and the decay of the velocity support of the distribution function
\begin{align*}
    |B_1|&\leq 
\Big|\int \dfrac{1}{e^{Et}}\dfrac{v^r}{E^2}Uf\dvol_{T_x\H^2}(v)\Big|+\Big| \int\dfrac{\sinh r v^{\theta}}{E}f \dvol_{T_x\H^2}(v)\Big|\\
& \leq\dfrac{1}{e^{\alpha t}}\|Uf_0\|_{L^{\infty}_{x,v}} \int_{\Omega(t,x)} \dfrac{|v^r|}{E^2}\dvol_{T_x\H^2}(v)+\|f_0\|_{L^{\infty}_{x,v}}\int_{\Omega(t,x)}\dfrac{|\sinh r v^{\theta}|}{E} \dvol_{T_x\H^2}(v)\\
&\lesssim \dfrac{1}{e^{2\alpha t}}\|f_0\|_{W^{1,\infty}_{x,v}}.
\end{align*}
Therefore, we obtain that the angular derivative of the spatial density $(\sinh r)^{-1}\partial_{\theta}\rho(f)$ is bounded above by
$$|(\sinh r)^{-1}\partial_{\theta}\rho(f)|\leq |A_1|+|B_1|\lesssim \|f_0\|_{W^{1,\infty}_{x,v}}e^{-{2\alpha t}}.$$ 

\textbf{Estimate for the derivative $\partial_{r}\rho(f)$.} By the previous decomposition of $H_r$, the radial derivative $\partial_{r}\rho(f)$ can be decomposed into
$$\partial_{r}\rho(f)(t,x)= \rho\Big( \dfrac{v^r}{E^2}Xf\Big)(t,x)-\rho\Big(\dfrac{\sinh r v^{\theta}}{E^2}Hf\Big)(t,x)=:A_2-B_2.$$

First, we write $A_2$ using the commuting vector field $tX+Y$ by $$A_2=\rho\Big( \dfrac{v^r}{E^2}Xf\Big)(t,x)=\dfrac{1}{t}\int \dfrac{v^r}{E^2}(tX+Y)f \dvol_{T_x\H^2}(v)-\dfrac{1}{t}\int \dfrac{v^r}{E^2}Yf \dvol_{T_x\H^2}(v),$$ where the second term in the RHS can be integrated by parts
\begin{align*}
    \int \dfrac{v^r}{E^2}Yf &\dvol_{T_x\H^2}(v)=\int \dfrac{(v^r)^2}{E^2}\partial_{v^r}f \dvol_{T_x\H^2}(v)+\int \dfrac{v^rv^{\theta}}{E^2}\partial_{v^{\theta}}f \dvol_{T_x\H^2}(v)\\
    &=-\int \dfrac{2v^r(v^{\theta})^2\sinh^2 r}{E^4}f \dvol_{T_x\H^2}(v)+\int \dfrac{\sinh^2 r v^r(v^{\theta})^2-(v^r)^3}{E^4}f \dvol_{T_x\H^2}(v)\\
    &=-\int \dfrac{v^r}{E^2}f \dvol_{T_x\H^2}(v),
\end{align*}
to obtain that $$A_2=\dfrac{1}{t}\int \dfrac{v^r}{E^2}(tX+Y)f \dvol_{T_x\H^2}(v)+\dfrac{1}{t}\int \dfrac{v^r}{E^2}f \dvol_{T_x\H^2}(v).$$ We estimate $A_2$ by making use of the decay of the velocity support of the distribution function and Remark \ref{rem_for_derivatives}.
\begin{align*}
    |A_2|&\leq 
\Big|\dfrac{1}{t}\int \dfrac{v^r}{E^2}(tX+Y)f \dvol_{T_x\H^2}(v) \Big|+\Big| \dfrac{1}{t}\int \dfrac{v^r}{E^2}f \dvol_{T_x\H^2}(v)\Big|\\
&\leq  \dfrac{1}{t}\|Yf_0\|_{L^{\infty}_{x,v}}\int_{\Omega(t,x)} \dfrac{|v^r|}{E^2} \dvol_{T_x\H^2}(v) + \dfrac{1}{t}\|f_0\|_{L^{\infty}_{x,v}}\int_{\Omega(t,x)} \dfrac{|v^r|}{E^2} \dvol_{T_x\H^2}(v)\\
&\lesssim \dfrac{1}{te^{\alpha t}}\|f_0\|_{W^{1,\infty}_{x,v}}.
\end{align*}

Similarly, the second term $B_2$ can be written using the unstable vector field $U$ by
\begin{align*}
    B_2&=\rho\Big(\dfrac{\sinh r v^{\theta}}{E^2}Hf\Big)(t,x)\\
    &=\int \dfrac{\sinh r v^{\theta}}{E^2}(H+EV)f\dvol_{T_x\H^2}(v)-\int \dfrac{\sinh r v^{\theta}}{E}Vf \dvol_{T_x\H^2}(v)\\
    &=\int \dfrac{1}{e^{Et}}\dfrac{\sinh r v^{\theta}}{E^2}Uf\dvol_{T_x\H^2}(v)-\int \dfrac{\sinh r v^{\theta}}{E}Vf \dvol_{T_x\H^2}(v),
\end{align*}
where the second term in the RHS can be integrated by parts
\begin{align*}
    \int \dfrac{\sinh r v^{\theta}}{E}Vf \dvol_{T_x\H^2}(v)&=\int \dfrac{ v^{\theta}v^{r}}{E}\partial_{v^{\theta}}f \dvol_{T_x\H^2}(v)-\int \dfrac{\sinh^2 r (v^{\theta})^2}{E}\partial_{v^r}f \dvol_{T_x\H^2}(v)\\
    &=-\int \dfrac{ (v^{r})^3}{E^3}f \dvol_{T_x\H^2}(v)-\int \dfrac{\sinh^2 r (v^{\theta})^2v^r}{E^3}f \dvol_{T_x\H^2}(v)\\
    &=-\int \dfrac{ v^{r}}{E}f \dvol_{T_x\H^2}(v),
\end{align*}
to obtain that $$B_2=\int \dfrac{1}{e^{Et}}\dfrac{\sinh r v^{\theta}}{E^2}Uf\dvol_{T_x\H^2}(v)+\int \dfrac{ v^{r}}{E}f \dvol_{T_x\H^2}(v).$$ We estimate $B_2$ combining \eqref{equation_decay_ptheta}, \eqref{decay_mom_support_hyp} and  \eqref{remark_support_lambda}.

\begin{align*}
    |B_2|&\leq 
\Big|\int \dfrac{1}{e^{Et}}\dfrac{\sinh r v^{\theta}}{E^2}Uf\dvol_{T_x\H^2}(v) \Big|+\Big| \int \dfrac{ v^{r}}{E}f \dvol_{T_x\H^2}(v)\Big|\\
&\leq \dfrac{1}{e^{\alpha t}}\|Uf_0\|_{L^{\infty}_{x,v}}\int_{\Omega(t,x)} \dfrac{|\sinh r v^{\theta}|}{E^2}\dvol_{T_x\H^2}(v) + \|f_0\|_{L^{\infty}_{x,v}}\int_{\Omega(t,x)} \dfrac{ |v^{r}|}{E}\dvol_{T_x\H^2}(v)\\
&\lesssim \dfrac{1}{e^{3\alpha t}}\|f_0\|_{W^{1,\infty}_{x,v}}+\dfrac{1}{e^{\alpha t}}\|f_0\|_{W^{1,\infty}_{x,v}}\\
&\lesssim \dfrac{1}{e^{\alpha t}}\|f_0\|_{W^{1,\infty}_{x,v}}.
\end{align*}
Finally, we obtain that the radial derivative of the spatial density $\partial_{r}\rho(f)$ is bounded above by
$$|\partial_{r}\rho(f)|\leq |A_2|+|B_2|\lesssim \|f_0\|_{W^{1,\infty}_{x,v}}e^{-{\alpha t}}.$$ 
\end{proof}

\subsection{Decay for compactly supported Vlasov fields}\label{subsection_decay_compact_support_vlasov_hyperbolic}

In this subsection, we prove exponential decay of the spatial density induced by a compactly supported Vlasov field on $\H^n$.

\begin{proof}[Proof of the estimate (\ref{estimate_spatial_density_no_derivatives_no_suppt_restrict})]
Given $x\in \H^2$ and $t\geq 1$, we recall the notation $$\Omega(t,x):=\Big\{v\in T_x\H^2:f(t,x,v)\ne 0\Big\}.$$ Fix $x=(r,\theta)\in\H^2$ and $t\ge 1$. It follows from \eqref{estimate_velocity_coordinate_hyperbolic_space_fix_particle_energy_hyp} that if $(v^\theta,v^r)\in \Omega(t,x)$, then 
\begin{equation}\label{equation_decay_ptheta_slow}
      |v^{\theta}\sinh r|\leq \dfrac{L}{c\exp( E t)},\qquad  \bigg(E^2- \dfrac{L^2}{c^2\exp{(2E t)}}\bigg)^{\frac{1}{2}}\leq |v^r|\le E.
\end{equation}
Let us consider polar coordinates in the fibers of the tangent bundle by $$E:=\sqrt{(v^r)^2+\sinh^2 r (v^{\theta})^2},\qquad \varphi:=\arctan\Big(\frac{v^r}{v^{\theta}\sinh r}\Big).$$ For our purposes, we change variables in the integration in the velocity variables that sets the spatial density as 
\begin{equation}\label{formula_spatial_density_polar_coordinates}
\rho(f)(t,x)=\int f(t,x,v)\mathrm{d}v^r\mathrm{d}(v^{\theta}\sinh r)=\int f(t,E,\varphi)E\mathrm{d}E\mathrm{d}\varphi,
\end{equation}
where we used the change of variables $(v^r,v^{\theta})\mapsto (v^r,v^{\theta}\sinh r)$ in the first equality, and the change of variables $ (v^r,v^{\theta}\sinh r)\mapsto (E,\varphi)$ in the second equality. By the compact support assumption, there exists $E_{\max}>0$ such that for every $(x,v)\in\supp(f_0)$ we have $E\leq E_{\max}$. As a result, we have $E\leq E_{\max}$ for every $(x,v)\in\supp(f)$ by the conservation along the geodesic flow of the particle energy. Therefore, we have 
\begin{equation}\label{decay_mom_support_slow}
\vol_{T_x\H^2}(\Omega(t,x))=  \int_{\Omega(t,x)}E\mathrm{d}E\mathrm{d}\varphi \leq 2\pi\int_{0}^{E_{\max}}  \dfrac{E}{e^{Et}}\mathrm{d}E \lesssim \frac{1}{t^2}.
\end{equation}
Finally, we obtain $$ |\rho(f)(t,x)|=\Big|\int f(t,x,v)\dvol_{T_x\H^2}(v)\Big|\leq \|f_0\|_{L^{\infty}_{x,v}} 
    \vol_{T_x\H^2}(\Omega(t,x))
    \lesssim   \dfrac{1}{t^2}\|f_0\|_{L^{\infty}_{x,v}}.$$
\end{proof}

Next, we prove decay estimates for derivatives of the spatial density induced by compactly supported Vlasov fields on $\H^2$. The proof of \eqref{estimate_spatial_density_with_derivatives_no_suppt_restrict} follows the same strategy as in the proof of \eqref{estimate_spatial_density_with_derivatives} in the previous subsection. Nonetheless, the estimate of every velocity average is performed slightly differently. Now, we need to keep track of the weights in $E$ which determine the time decay of every velocity average. The weights that decay exponentially are irrelevant due to the contribution of the particles with arbitrarily small particle energies.

\begin{proof}[Proof of the estimate (\ref{estimate_spatial_density_with_derivatives_no_suppt_restrict})] We quickly follow the strategy in the proof of the estimate \eqref{estimate_spatial_density_with_derivatives} under the appropriate modifications to obtain the correct decay rates in this case. Recall the formulae \eqref{identity_derivatives_spatial_dens_intro_horiz_notat} for the derivatives of the spatial density in terms of the corresponding horizontal vector fields, and the decomposition \eqref{identity_decomposition_horizontal_frame_dimension_two_hyp} for the horizontal vector fields in terms of $X$ and $H$. 

By the compact support assumption, there exists $E_{\max}>0$ such that for every $(x,v)\in\supp(f_0)$ we have $E\leq E_{\max}$. As a result, we have $E\leq E_{\max}$ for every $(x,v)\in\supp(f)$ by the conservation along the geodesic flow of the particle energy. According to the proof of the estimate \eqref{estimate_velocity_coordinate_hyperbolic_space_fix_particle_energy_hyp}, the weight $v^r$ converges exponentially to the particle energy $E$, whereas the weight $\sinh r v^{\theta}$ decays exponentially in time. Observe that we have uniform boundedness for $\frac{v^r}{E}$ and $\frac{v^{\theta}\sinh r}{E}$ by definition of the particle energy.

\textbf{Estimate for the derivative $(\sinh r)^{-1}\partial_{\theta}\rho(f)$.} By the decomposition of $H_{\theta}$ in terms of $X$ and $H$, the angular derivative $(\sinh r)^{-1}\partial_{\theta}\rho(f)$ can be decomposed into
$$\dfrac{1}{\sinh r}\partial_{\theta}\rho(f)(t,x)= \rho\Big(\dfrac{\sinh r v^{\theta}}{E^2}Xf\Big)(t,x)+\rho\Big(\dfrac{v^r}{E^2}Hf\Big)(t,x)=:A_1+B_1.$$

The first term $A_1$ can be written using the commuting vector field $tX+Y$ by $$A_1=\dfrac{1}{t}\int \dfrac{\sinh r v^{\theta}}{E^2}(tX+Y)f \dvol_{T_x\H^2}(v)+\dfrac{1}{t}\int \dfrac{\sinh r v^{\theta}}{E^2}f \dvol_{T_x\H^2}(v),$$ after integrating by parts in the velocity variables. Using the decay of the velocity support of the distribution function \eqref{decay_mom_support_slow}, together with \eqref{equation_decay_ptheta_slow} and \eqref{remark_support_lambda} we obtain 
\begin{align*}
    |A_1|&\leq 
\Big|\dfrac{1}{t}\int \dfrac{\sinh r v^{\theta}}{E^2}(tX+Y)f \dvol_{T_x\H^2}(v)\Big|+\Big| \dfrac{1}{t}\int \dfrac{\sinh r v^{\theta}}{E^2}f \dvol_{T_x\H^2}(v)\Big|\\
&\leq \dfrac{1}{t}\|Yf_0\|_{L^{\infty}_{x,v}} \int_{\Omega(t,x)} \dfrac{|\sinh r v^{\theta}|}{E^2} \dvol_{T_x\H^2}(v)+ \dfrac{1}{t}\|f_0\|_{L^{\infty}_{x,v}}\int_{\Omega(t,x)} \dfrac{|\sinh r v^{\theta}|}{E^2} \dvol_{T_x\H^2}(v)\\
&\lesssim \frac{1}{t}\int_{0}^{E_{\max}}  \dfrac{\mathrm{d}E}{e^{2Et}}\|f_0\|_{W^{1,\infty}_{x,v}}\\
&\lesssim \dfrac{1}{t^2}\|f_0\|_{W^{1,\infty}_{x,v}},
\end{align*}
where we have used the change of variables $(v^r,v^{\theta})\mapsto (E,\varphi)$ considered in \eqref{formula_spatial_density_polar_coordinates}.
where in the second inequality we used \eqref{remark_support_lambda} and in the third inequality we used \eqref{equation_decay_ptheta}.

On the other hand, the second term $B_1$ can be written using the unstable vector field $U$ by
$$B_1=\int \dfrac{1}{e^{Et}}\dfrac{v^r}{E^2}Uf\dvol_{T_x\H^2}(v)-\int\dfrac{\sinh r v^{\theta}}{E}f \dvol_{T_x\H^2}(v),$$ 
after integrating by parts in the velocity variables. Similarly to the estimate of $A_1$, we estimate $B_1$ by combining Remark \ref{rem_for_derivatives}, the decay of $\sinh r v^\theta$ and the decay of the velocity support of the distribution function
\begin{align*}
    |B_1|&\leq 
\Big|\int \dfrac{1}{e^{Et}}\dfrac{v^r}{E^2}Uf\dvol_{T_x\H^2}(v)\Big|+\Big| \int\dfrac{\sinh r v^{\theta}}{E}f \dvol_{T_x\H^2}(v)\Big|\\
& \leq\|Uf_0\|_{L^{\infty}_{x,v}} \int_{\Omega(t,x)} \dfrac{1}{e^{E t}} \dfrac{|v^r|}{E^2}\dvol_{T_x\H^2}(v)+\|f_0\|_{L^{\infty}_{x,v}}\int_{\Omega(t,x)}\dfrac{|\sinh r v^{\theta}|}{E} \dvol_{T_x\H^2}(v)\\
&\lesssim \int_{0}^{E_{\max}}  \dfrac{\mathrm{d}E}{e^{2Et}}\|f_0\|_{W^{1,\infty}_{x,v}}+\int_{0}^{E_{\max}}  \dfrac{E\mathrm{d}E}{e^{2Et}}\|f_0\|_{W^{1,\infty}_{x,v}}\\
&\lesssim \dfrac{1}{t}\|f_0\|_{W^{1,\infty}_{x,v}}.
\end{align*}
Therefore, we obtain that the angular derivative of the spatial density $(\sinh r)^{-1}\partial_{\theta}\rho(f)$ is bounded above by
$$|(\sinh r)^{-1}\partial_{\theta}\rho(f)|\leq |A_1|+|B_1|\lesssim \|f_0\|_{W^{1,\infty}_{x,v}}t^{-1}.$$ 
\textbf{Estimate for the derivative $\partial_{r}\rho(f)$.} By the decomposition of $H_{r}$ in terms of $X$ and $H$, the radial derivative $\partial_{r}\rho(f)$ can be decomposed into
$$\partial_{r}\rho(f)(t,x)= \rho\Big( \dfrac{v^r}{E^2}Xf\Big)(t,x)-\rho\Big(\dfrac{\sinh r v^{\theta}}{E^2}Hf\Big)(t,x)=:A_2-B_2.$$

First, we write $A_2$ using the commuting vector field $tX+Y$ by $$A_2=\dfrac{1}{t}\int \dfrac{v^r}{E^2}(tX+Y)f \dvol_{T_x\H^2}(v)+\dfrac{1}{t}\int \dfrac{v^r}{E^2}f \dvol_{T_x\H^2}(v),$$ after integrating by parts in the velocity variables. We estimate $A_2$ by making use of the decay of the velocity support of the distribution function and Remark \ref{rem_for_derivatives}.
\begin{align*}
    |A_2|&\leq 
\Big|\dfrac{1}{t}\int \dfrac{v^r}{E^2}(tX+Y)f \dvol_{T_x\H^2}(v) \Big|+\Big| \dfrac{1}{t}\int \dfrac{v^r}{E^2}f \dvol_{T_x\H^2}(v)\Big|\\
&\leq  \dfrac{1}{t}\|Yf_0\|_{L^{\infty}_{x,v}}\int_{\Omega(t,x)} \dfrac{|v^r|}{E^2} \dvol_{T_x\H^2}(v) + \dfrac{1}{t}\|f_0\|_{L^{\infty}_{x,v}}\int_{\Omega(t,x)} \dfrac{|v^r|}{E^2} \dvol_{T_x\H^2}(v)\\
&\lesssim \frac{1}{t}\int_{0}^{E_{\max}}  \dfrac{\mathrm{d}E}{e^{Et}}\|f_0\|_{W^{1,\infty}_{x,v}}\\
&\lesssim \dfrac{1}{t^2}\|f_0\|_{W^{1,\infty}_{x,v}}.
\end{align*}

Similarly, the second term $B_2$ can be written using the unstable vector field $U$ by $$B_2=\int \dfrac{1}{e^{Et}}\dfrac{\sinh r v^{\theta}}{E^2}Uf\dvol_{T_x\H^2}(v)+\int \dfrac{ v^{r}}{E}f \dvol_{T_x\H^2}(v),$$ after integrating by parts in the velocity variables. We estimate $B_2$ combining \eqref{equation_decay_ptheta_slow}, \eqref{decay_mom_support_slow} and  \eqref{remark_support_lambda}.
\begin{align*}
    |B_2|&\leq 
\Big|\int \dfrac{1}{e^{Et}}\dfrac{\sinh r v^{\theta}}{E^2}Uf\dvol_{T_x\H^2}(v) \Big|+\Big| \int \dfrac{ v^{r}}{E}f \dvol_{T_x\H^2}(v)\Big|\\
&\leq \|Uf_0\|_{L^{\infty}_{x,v}}\int_{\Omega(t,x)}\dfrac{1}{e^{E t}} \dfrac{|\sinh r v^{\theta}|}{E^2}\dvol_{T_x\H^2}(v) + \|f_0\|_{L^{\infty}_{x,v}}\int_{\Omega(t,x)} \dfrac{ |v^{r}|}{E}\dvol_{T_x\H^2}(v)\\
&\lesssim \int_{0}^{E_{\max}}  \dfrac{\mathrm{d}E}{e^{3Et}}\|f_0\|_{W^{1,\infty}_{x,v}}+\int_{0}^{E_{\max}}  \dfrac{E\mathrm{d}E}{e^{Et}}\|f_0\|_{W^{1,\infty}_{x,v}}\\
&\lesssim \dfrac{1}{t}\|f_0\|_{W^{1,\infty}_{x,v}}.
\end{align*}
Finally, we obtain that the radial derivative of the spatial density $\partial_{r}\rho(f)$ is bounded above by
$$|\partial_{r}\rho(f)|\leq |A_2|+|B_2|\lesssim \|f_0\|_{W^{1,\infty}_{x,v}}t^{-1}.$$
\end{proof}

\subsection{Decay for Vlasov fields in higher dimensions}\label{subsection_decay_spatial_density_higher_dimension}
Similar decay estimates for the spatial density induced by a Vlasov field on hyperbolic space $(\H^n,g_{\H^n})$, can be proven by using a commuting vector field approach akin to the one considered in dimension two. The spatial density induced by a Vlasov field $f$ on $(\H^n,g_{\H^n})$ is given by $$\rho(f)(t,x)=\int f(t,x,v)\sinh^{n-1} r \mathrm{d}v^r\dvol_{\S^{n-1}}(v),$$ in terms of the volume form on $(\H^n,g_{\H^n})$ in local coordinates. The decay of the velocity support of the distribution function can be proven using the same strategy performed in the two dimensional case. First, one studies the ode satisfied by the first coordinate $\cosh r$ of a geodesic on the hyperboloid $\H^n$ given by
$$\dfrac{\mathrm{d}^2}{\mathrm{d}t^2}\cosh r=E^2\cosh r,$$ in terms of the particle energy $E^2:=g_{\H^n}(\dot{\gamma},\dot{\gamma})$. Later, one can use an orthonormal parallel frame $\{\dot{\gamma},N_1,\dots, N_{n-1}\}$ to build commuting vector fields as in Section \ref{subsection_commutators_vlasov_hyperbolic}. Using the commuting vector fields $$\Hor_{(x,v)}(v),\quad t\Hor_{(x,v)}(v)+\Ver_{(x,v)}(v),\quad  e^{\pm Et}(\Hor_{(x,v)}(N_i)\pm E\Ver_{(x,v)}(N_i)),$$ we can estimate the derivatives of the spatial density $$\partial_{x^i}\rho(f)(t,x)=\rho(\Hor_{(x,v)}(\partial_{x^i})f)(t,x),$$ by decomposing every horizontal vector field $\Hor_{(x,v)}(\partial_{x^i})$ in terms of the commuting vector fields plus errors that can be controlled after integration by parts in the velocity variables. Decay estimates can finally be derived using bounds for the geodesic flow in the support of the distribution function. 

\section{Decay for Vlasov fields on asymptotically hyperbolic manifolds}\label{section_dispersive_perturbations}

In this section, we prove decay estimates for the spatial density induced by a Vlasov field on a non-trapping asymptotically hyperbolic Riemannian manifold $(\M,g)$. The geodesic flow in $(\M,g)$ is determined by the geodesic equations $$\dfrac{\mathrm{d}x^{i}}{\mathrm{d}t}=v^{i},\qquad \dfrac{\mathrm{d}v^{i}}{\mathrm{d}t}=-\Gamma^{i}_{jk}v^{j}v^{k},$$ in terms of the Christoffel symbols $\Gamma^{i}_{jk}$ of $(\M,g)$. The geodesic flow in $(\M,g)$ can also be viewed as a \emph{Hamiltonian flow} in the cotangent bundle $(T^*\M, \bar{g})$, where the corresponding Hamiltonian $H:T^*\M\to[0,\infty)$ is given by $$H(x,v):=\dfrac{1}{2}g^{ij}v_{i}v_{j}.$$ We define the \emph{particle energy} $E: T\M\to [0,\infty) $ of a geodesic $\gamma$ by $$E(x,v):=g_x(v,v)^{\frac{1}{2}}=\sqrt{g_{ij}v^{i}v^{j}}.$$ The particle energy $E$ is conserved along the geodesic flow. Note that the Hamiltonian $H$ can be written in terms of the particle energy as $H=\frac{1}{2}E^2$. We also define the \emph{angular velocity} $l: T\M\to [0,\infty)$ of a geodesic $\gamma$ by $$l(x,v):=g(\partial_{\theta},\dot{\gamma}).$$ The angular velocity $l$ is almost conserved along the geodesic flow since the vector field $\partial_{\theta}$ is Killing on hyperbolic space. 

\begin{lemma}\label{lemma_estimates_veloc_coord_and_almost_cons_quant}
Let $E_{\mathrm{max}}>0$. Let $(x,v)\in T\M$ such that $E(x,v)\leq E_{\max}$. Then, the velocity coordinates of a geodesic $\gamma$ with $\gamma(0)=x$ and $\dot{\gamma}(0)=v$ satisfy 
\begin{equation}\label{unifom_bound_components_geodesic_ah}
|v^r|\leq E_{\mathrm{max}}+1,\qquad |v^{\theta}\sinh r| \leq E_{\max}+1,
\end{equation}
for $r$ sufficiently large. Furthermore, the particle energy and the angular velocity satisfy 
\begin{equation}\label{estimate_particle_energy_and_angular_velocity_ah}
E^2=(v^r)^2+\dfrac{l^2}{\sinh^2 r}+O(e^{-\beta r}),\qquad l=v^{\theta}\sinh^{2} r+O(e^{-(\beta-1)r}).
\end{equation}
\end{lemma}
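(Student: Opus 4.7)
The plan is to work in the asymptotic chart supplied by the diffeomorphism $\Psi$ of Definition \ref{ahyp}, in which $g = g_{\H^2} + h$ with $|h|_{g_{\H^2}} = O(e^{-\beta r})$. Reading off components in the $g_{\H^2}$-orthonormal frame $\{\partial_r,(\sinh r)^{-1}\partial_\theta\}$, this decomposition translates into
$$g_{rr} = 1 + O(e^{-\beta r}), \qquad g_{r\theta} = O(e^{-\beta r}\sinh r), \qquad g_{\theta\theta} = \sinh^2 r \cdot (1 + O(e^{-\beta r})).$$
These three component-wise estimates, valid uniformly for $r$ large, drive the entire proof.

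First I would establish the velocity bounds \eqref{unifom_bound_components_geodesic_ah}. Expanding $E^2 = g_{ij}v^i v^j$ and controlling the cross term by Cauchy--Schwarz yields
$$E^2 = \left[(v^r)^2 + \sinh^2 r \,(v^\theta)^2\right]\cdot\left(1 + O(e^{-\beta r})\right).$$
Since $E \leq E_{\max}$, by taking $r$ large enough (uniformly in such $(x,v)$) so that the multiplicative correction lies in $[1-\varepsilon,1+\varepsilon]$ for arbitrarily small $\varepsilon > 0$, one concludes $(v^r)^2 + \sinh^2 r \,(v^\theta)^2 \leq (E_{\max}+1)^2$, which immediately yields both $|v^r| \leq E_{\max} + 1$ and $|v^\theta \sinh r| \leq E_{\max} + 1$.

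Next I would expand the angular velocity: $l = g(\partial_\theta,\dot\gamma) = g_{r\theta}v^r + g_{\theta\theta}v^\theta = v^\theta \sinh^2 r + O(e^{-\beta r}\sinh r)\cdot v^r + O(e^{-\beta r}\sinh^2 r)\cdot v^\theta$. Using $|v^r|\leq E_{\max}+1$ in the first error and $|v^\theta \sinh r| \leq E_{\max}+1$ together with $\sinh r \lesssim e^r$ in the second, both error terms are of order $e^{-(\beta-1)r}$, as claimed. Finally, starting from the expansion $E^2 = (v^r)^2 + \sinh^2 r \,(v^\theta)^2 + O(e^{-\beta r})$ --- now with an absolute remainder thanks to the uniform velocity bounds --- I would rewrite $\sinh^2 r \,(v^\theta)^2$ in terms of $l$. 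The factorisation $(v^\theta \sinh^2 r)^2 - l^2 = (v^\theta \sinh^2 r - l)(v^\theta \sinh^2 r + l) = O(e^{-(\beta-1)r})\cdot O(e^r) = O(e^{-(\beta-2)r})$, combined with the weight $1/\sinh^2 r \sim e^{-2r}$, gives $\sinh^2 r (v^\theta)^2 = l^2/\sinh^2 r + O(e^{-\beta r})$, completing the proof.

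The argument is purely perturbative off the hyperbolic model, so the only real obstacle is the bookkeeping of asymptotic rates. The key numerical point is that the factor $1/\sinh^2 r \sim e^{-2r}$ must be used to absorb the loss of $e^{2r}$ incurred in passing from $v^\theta \sinh r$ (which is bounded) to $(v^\theta \sinh^2 r)^2$; the hypothesis $\beta > 2$ in Definition \ref{ahyp} then comfortably ensures all intermediate and final remainders are genuinely decaying.
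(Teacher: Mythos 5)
Your argument is correct and follows essentially the same route as the paper: bound the difference between $E^2$ and the model quadratic form $(v^r)^2+\sinh^2 r\,(v^\theta)^2$ by $O(e^{-\beta r})$ to get the velocity bounds, expand $l$ using the metric components to pick up the $O(e^{-(\beta-1)r})$ error, then square and divide by $\sinh^2 r$ to recover the energy identity with an $O(e^{-\beta r})$ remainder. The only point worth adding explicitly is the conservation of $E$ along the geodesic flow, which lets the pointwise estimate at $E\leq E_{\max}$ be applied at every time along $\gamma$, as the paper notes.
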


\begin{proof}
Since $(\M,g)$ is asymptotically hyperbolic, we have 
\begin{equation}\label{estimate_particle_energy_ah}
|E^2(x,v)-(v^r)^2-\sinh^2 r (v^{\theta})^2|=O(e^{-\beta r}).
\end{equation}
Using this bound combined with the conservation along the geodesic flow of $E$, we obtain the estimates \eqref{unifom_bound_components_geodesic_ah}. By definition of the angular velocity $$l=v^rg\Big(\frac{\partial_{\theta}}{\sinh r},\partial_r\Big)\sinh r +v^{\theta}\sinh^2 rg\Big(\frac{\partial_{\theta}}{\sinh r},\frac{\partial_{\theta}}{\sinh r}\Big)=v^{\theta}\sinh^2 r+O(e^{-(\beta-1) r}),$$ where we have used the estimates \eqref{unifom_bound_components_geodesic_ah}. In particular, we obtain $l^2=(v^{\theta})^2\sinh^{4} r+O(e^{-(\beta-2) r}),$ which combined with \eqref{estimate_particle_energy_ah} results in $$E^2=(v^r)^2+\dfrac{l^2}{\sinh^2 r}+O(e^{- \beta r}).$$ 
\end{proof}

We also show that the geodesic flow converges at infinity to the geodesic flow in hyperbolic space. First, we take a look at the geodesic equations on $(\M,g)$.

\begin{lemma}\label{lemma_geod_eqns_ah_mfld}
The geodesic flow $\phi_t(x,v)$ on $(\M,g)$ satisfies that for every $t\geq 0$, we have $$\frac{\mathrm{d}v^{\theta}}{\mathrm{d}t} =-2\coth r v^rv^{\theta}+O(e^{- \beta r}),\qquad  \frac{\mathrm{d}v^r}{\mathrm{d}t} =\cosh r \sinh r (v^{\theta})^2+O(e^{- \beta r}).$$
\end{lemma}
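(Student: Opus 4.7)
The plan is to derive the two identities directly from the geodesic equation $\frac{\mathrm{d}v^i}{\mathrm{d}t} = -\Gamma^i_{jk}(g)v^jv^k$ by comparing Christoffel symbols of $g$ with those of $g_{\H^2}$. Writing $h := \Psi^*g - g_{\H^2}$, the asymptotically hyperbolic assumption in Definition \ref{ahyp} gives $|D_{g_{\H^2}}^jh|_{g_{\H^2}} = O(e^{-\beta r})$ for $j\in\{0,1\}$. The classical identity for the difference of two Christoffel symbols reads
\begin{equation*}
T^k_{ij} := \Gamma^k_{ij}(g) - \Gamma^k_{ij}(g_{\H^2}) = \tfrac{1}{2}g^{kl}\big(\nabla^{\H^2}_i h_{jl} + \nabla^{\H^2}_j h_{il} - \nabla^{\H^2}_l h_{ij}\big),
\end{equation*}
so the geodesic equation on $(\M,g)$ differs from that on $(\H^2,g_{\H^2})$, whose right hand sides are precisely $\cosh r\sinh r (v^\theta)^2$ and $-2\coth r\, v^rv^\theta$, by the error term $-T^k_{ij}v^iv^j$.

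First, I would estimate $T$ in the $g_{\H^2}$-orthonormal frame $\{e_r,e_\theta\} := \{\partial_r,(\sinh r)^{-1}\partial_\theta\}$. Since $|g^{-1}|_{g_{\H^2}} = O(1)$ (as $h$ is small for $r$ large, so $g^{-1}$ is a bounded perturbation of $g_{\H^2}^{-1}$) and $|\nabla^{\H^2} h|_{g_{\H^2}} = O(e^{-\beta r})$, the orthonormal-frame components $\hat T^\alpha_{\beta\gamma}$ of the tensor $T$ satisfy $|\hat T^\alpha_{\beta\gamma}| = O(e^{-\beta r})$ uniformly in all indices.

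Second, I would contract $T$ with the velocity. By Lemma \ref{lemma_estimates_veloc_coord_and_almost_cons_quant}, the orthonormal-frame components of $v$, namely $\hat v^r = v^r$ and $\hat v^\theta = v^\theta \sinh r$, are uniformly bounded along the geodesic (using the conservation of the particle energy $E$ and its boundedness on $\supp(f_0)$). Hence $\hat T^\alpha_{\beta\gamma}\hat v^\beta\hat v^\gamma = O(e^{-\beta r})$ in the orthonormal frame. Converting back to coordinates, the $\partial_r$-component of the error equals its $e_r$-component and is $O(e^{-\beta r})$, while the $\partial_\theta$-component equals $(\sinh r)^{-1}$ times its $e_\theta$-component and is thus $O(e^{-(\beta+1)r}) \subset O(e^{-\beta r})$. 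This yields the stated error terms.

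The main technical point will be confirming that the Christoffel difference $T$ really carries the decay $O(e^{-\beta r})$ in the orthonormal frame. This requires handling $g^{-1} = g_{\H^2}^{-1} + O(e^{-\beta r})$ carefully, and passing from the coordinate derivatives of $h$ that one might initially write to the covariant derivative $\nabla^{\H^2} h$ which is the quantity actually controlled by the asymptotically hyperbolic assumption. Once these weights are aligned (and the $C^1$-smallness of $h$ is used to absorb lower-order errors into $O(e^{-\beta r})$), the remaining estimates are routine contractions with the uniformly bounded orthonormal velocity components supplied by Lemma \ref{lemma_estimates_veloc_coord_and_almost_cons_quant}.
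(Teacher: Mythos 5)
Your argument is correct and follows essentially the same route as the paper: compare the Christoffel symbols of $g$ and $g_{\H^2}$ using the $O(e^{-\beta r})$ decay from Definition \ref{ahyp}, and contract with the velocity components bounded via Lemma \ref{lemma_estimates_veloc_coord_and_almost_cons_quant}. The only difference is that you make explicit the orthonormal-frame bookkeeping (tensor norm of the Christoffel difference versus coordinate components, and the $\sinh r$ weights), which the paper's proof treats as a straightforward application of the same bounds.
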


\begin{proof}
By definition of the Christoffel symbols, for every $i$, $j$, $k\in\{r,\theta\}$, we have $|\Gamma_{jk}^{i}(g)-\Gamma_{jk}^{i}(g_{\H^n})|=O(e^{- \beta r})$ on an asymptotically hyperbolic manifold. Furthermore, we have boundedness of the components of $\gamma$ by Lemma \ref{lemma_estimates_veloc_coord_and_almost_cons_quant}. The lemma is a straightforward application of these bounds on the geodesic equations in $(\M,g)$.
\end{proof}

\subsection{Estimates for the geodesic flow}

For our purposes, we first show estimates in time for the velocity coordinates along the geodesics in $\supp(f)$.

\begin{lemma}\label{lemma_basic_estimate_radial_coord_along_time}
Let $R_1>0$ be sufficiently large. The radial coordinate $r(t)$ along a geodesic $\gamma$ in $\M\setminus \overline{B(R_1)}$ such that $\gamma(0)\in \supp(f_0)$ satisfies that for every $t\geq 0$, $$ce^{Et}\leq \sinh r(t)\leq  C e^{Et},$$ where $c,C$ are positive constants depending only on $\supp(f_0)$. 
\end{lemma}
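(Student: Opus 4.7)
The plan is to mirror the hyperbolic-space argument from \eqref{sinh_bounds}, treating $(\M,g)$ as a perturbation of $(\H^{2},g_{\H^{2}})$ in the far-away region and absorbing the error terms via a bootstrap. Using the geodesic equations from Lemma~\ref{lemma_geod_eqns_ah_mfld} and the identity $E^{2}=(v^{r})^{2}+\sinh^{2}r\,(v^{\theta})^{2}+O(e^{-\beta r})$ from Lemma~\ref{lemma_estimates_veloc_coord_and_almost_cons_quant}, I first compute
\[
\frac{\mathrm{d}^{2}}{\mathrm{d}t^{2}}\cosh r(t)
=\cosh r\bigl((v^{r})^{2}+\sinh^{2}r\,(v^{\theta})^{2}\bigr)+O(\sinh r\cdot e^{-\beta r})
=E^{2}\cosh r+O(e^{-(\beta-1)r}),
\]
where the final error decays in $r$ because $\beta>2$. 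Setting $c_{\pm}(t):=\cosh r(t)\pm E^{-1}\sinh r(t)\,v^{r}(t)$ as in the hyperbolic case, the same computation yields the perturbed first-order system
\[
\frac{\mathrm{d}}{\mathrm{d}t}c_{\pm}(t)=\pm E\,c_{\pm}(t)+O\bigl(E^{-1}e^{-(\beta-1)r(t)}\bigr),
\]
so the full analysis reduces to controlling the Duhamel corrections.

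Next I would run a bootstrap argument. Fix $R_{1}$ so large that the implicit constants in the remainders above are as small as desired. Assume on a maximal interval $[0,T^{\ast})$ the bound $\sinh r(t)\le 2C_{0}e^{Et}$, with $C_{0}$ determined by $\supp(f_{0})$. Under this bootstrap $r(s)\ge Es-C_{1}$, hence
\[
\int_{0}^{t}e^{E(t-s)}e^{-(\beta-1)r(s)}\,\mathrm{d}s\lesssim \frac{e^{Et}}{\beta E},
\]
with implicit constant shrinking as $R_{1}\to\infty$. Plugging this into the Duhamel formula for $c_{+}$ gives $c_{+}(t)=c_{+}(0)e^{Et}+o(e^{Et})$, strictly improving the bootstrap and yielding both $T^{\ast}=\infty$ and the upper estimate $\sinh r(t)\le C e^{Et}$. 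The analogous Duhamel computation for $c_{-}$ yields $c_{-}(t)=c_{-}(0)e^{-Et}+o(e^{-Et})$. Using the algebraic identity
\[
c_{+}(t)c_{-}(t)=\cosh^{2}r-E^{-2}\sinh^{2}r\,(v^{r})^{2}=1+\frac{\sinh^{2}r}{E^{2}}\bigl(E^{2}-(v^{r})^{2}\bigr)\ge 1,
\]
together with $E\ge \alpha>0$ on $\supp(f_{0})$ (from the hypothesis of Theorem~\ref{thm_decay_spatial_density_ah_manif}), both $c_{+}(0)$ and $c_{-}(0)$ are uniformly bounded away from zero on the support, forcing $c_{+}(t)\gtrsim e^{Et}$ and hence the lower bound $\sinh r(t)\gtrsim e^{Et}$.

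The main technical obstacle is that $\partial_{\theta}$ is no longer Killing on $(\M,g)$, so the angular velocity $l(x,v)=g(\partial_{\theta},\dot{\gamma})$ is only approximately conserved along the geodesic flow; one needs a refinement of Lemma~\ref{lemma_estimates_veloc_coord_and_almost_cons_quant} asserting $|\mathrm{d}l/\mathrm{d}t|=O(e^{-\beta r(t)})$, coming from the fact that $\partial_{\theta}$ is Killing only to order $O(e^{-\beta r})$ in the far-away region. Integrating this defect along the geodesic against the bootstrap decay $e^{-\beta r(t)}\lesssim e^{-\beta Et}$ yields $|l(t)-l(0)|=O(E^{-1})$ uniformly, which is exactly what is required to keep $c_{+}c_{-}$ uniformly bounded below and thus to propagate the lower bound in time. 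Once this quantitative almost-conservation of $l$ is in hand, every other step runs in parallel with the hyperbolic case of Section~\ref{section_dispersive_hyperbolic}, and the constants $c,C$ indeed depend only on $\supp(f_{0})$.
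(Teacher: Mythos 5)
Your overall strategy is the same as the paper's: derive $\frac{\mathrm{d}^2}{\mathrm{d}t^2}\cosh r=E^2\cosh r+O(e^{-(\beta-1)r})$, introduce $c_{\pm}(t)=\cosh r\pm E^{-1}\sinh r\,v^r$, integrate the perturbed first-order equations by Duhamel, and conclude from the positivity of $c_{\pm}(0)$. However, there is a genuine gap where the error terms are controlled: from the bootstrap hypothesis $\sinh r(t)\le 2C_0e^{Et}$, which is an \emph{upper} bound on $r$, you assert the \emph{lower} bound $r(s)\ge Es-C_1$, and it is only this lower bound that makes the Duhamel integral $\int_0^t e^{E(t-s)}e^{-(\beta-1)r(s)}\mathrm{d}s$ small. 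An upper bound on $\sinh r$ says nothing about how small $r(s)$ can be: a priori the geodesic could turn around and re-enter the region where the $O(e^{-\beta r})$ remainders are of size one, and then the bootstrap does not close. The missing ingredient is exactly the preliminary step the paper performs before touching $c_{\pm}$: in the far-away region, the identity \eqref{estimate_particle_energy_and_angular_velocity_ah} together with $E\ge\alpha$ forces $v^r>\alpha/2>0$ (the geodesic is uniformly outgoing), hence $r(t)\ge r(0)+\frac{\alpha}{2}t$, extended to all of $\supp(f)$ by Cauchy stability. With this a priori linear lower bound in hand the error integrals are controlled directly and no bootstrap is needed; if you insist on a bootstrap, it must be run on a lower bound for $r$ (equivalently on the sign and size of $v^r$), not on the upper bound for $\sinh r$.

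Two smaller points. First, the exact inequality $c_+c_-=1+\frac{\sinh^2 r}{E^2}\bigl(E^2-(v^r)^2\bigr)\ge 1$ is not available on $(\M,g)$: since $g$ agrees with $g_{\H^2}$ only up to $O(e^{-\beta r})$ (including off-diagonal terms), one only gets $c_+c_-=1+\frac{l^2}{E^2}+O(e^{-(\beta-2)r})$, which is still bounded below by, say, $1/2$ once $R_1$ is large; combined with the upper bound on $c_{\pm}(0)$ coming from the compact support this yields the required $c_{\pm}(0)\ge\e$. Second, the issue you single out as the main obstacle, a quantitative almost-conservation of $l$ along the flow, is not needed for this lemma: both the computation of $\frac{\mathrm{d}}{\mathrm{d}t}c_{\pm}$ and the lower bound on $c_{\pm}(0)$ only use the pointwise identity of Lemma \ref{lemma_estimates_veloc_coord_and_almost_cons_quant} with $l=l(t)$ evaluated at the same time; in the paper, the uniform bound on $l(t)$ is obtained \emph{afterwards} (Corollary \ref{cor_unif_bound_angular_velocity}) as a consequence of this lemma, not as an input to it.
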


\begin{proof}

From the first equation in \eqref{estimate_particle_energy_and_angular_velocity_ah}, we have $v^r>\frac{\alpha}{2}>0$ in the far-away region of $(\M,g)$, for a positive constant $c'>0$. Thus, the radial coordinate along the flow satisfies $r(t)\geq r(0)+\frac{\alpha}{2} t$. By Cauchy stability of the geodesic flow, we have $r(t)\geq 1+\frac{\alpha}{2} t$ for all geodesics in $\supp(f)$. We proceed to improve this elementary estimate by using the radial geodesic equation.

The geodesic equation for the radial coordinate can be written in terms of the angular velocity by
\begin{equation}\label{eq:dv^r_ah}\dfrac{\mathrm{d}v^r}{\mathrm{d}t}=\coth r\dfrac{l^2}{\sinh^2 r}+O(e^{-\beta r}),\end{equation}
where we have used Lemma \ref{lemma_estimates_veloc_coord_and_almost_cons_quant} and Lemma \ref{lemma_geod_eqns_ah_mfld}. As a result, the quantity $\cosh r$ satisfies the linear ode $$\dfrac{\mathrm{d}^2}{\mathrm{d}t^2}\cosh r=\dfrac{\mathrm{d}}{\mathrm{d}t}(\sinh r v^r)=\cosh r (v^r)^2+\cosh r \dfrac{l^2}{\sinh^2 r}+O(e^{-(\beta-1) r})=E^2\cosh r+O(e^{-(\beta-1) r}). $$ The radial terms given by $$c_{\pm}(t):=\cosh r(t)\pm \dfrac{1}{E}\sinh r(t) v^r(t)$$ satisfy the linear odes 
\begin{align*}
 \dfrac{\mathrm{d}}{\mathrm{d}t}c_{\pm}(t)&=\sinh r v^r\pm E\cosh r+O(e^{-(\beta-1) r})=\pm E\Big(\cosh r \pm \dfrac{1}{E}\sinh r v^r\Big)+O(e^{-(\beta-1) r})\\
 &=\pm E c_{\pm}(t)+O(e^{-(\beta-1) r}).
\end{align*}
Rearranging this ode, we obtain $$\dfrac{\mathrm{d}}{\mathrm{d}t}\Big[c_{\pm}(t)e^{\mp Et}\Big]=e^{\mp Et}O(e^{-(\beta-1) r}),$$ so integrating, we have $$c_{\pm}(t)=e^{\pm Et}c_{\pm}(0)+e^{\pm Et}\int_0^t e^{\mp E\tau}O(e^{-(\beta-1) r(\tau)})\mathrm{d}\tau.$$ In particular, we have $c_+(t)= e^{Et}(c_+(0)+O(1))$ and $c_-(t)= e^{-Et}c_-(0)+O(1)$ by using the lower bound $r(t)\geq 1+\frac{\alpha}{2} t$ for all geodesics in $\supp(f)$. Furthermore, we have $$\cosh r(t)=\frac{1}{2}(c_+(t)+c_-(t))=\frac{1}{2}(e^{Et}(c_+(0)+O(1))+e^{-Et}c_-(0)+O(1)).$$ We observe that the identity in \eqref{estimate_particle_energy_and_angular_velocity_ah} relating the particle energy $E$ and the angular velocity $l$ implies $$c_+(t)c_-(t)=\cosh^2 r(t)-\dfrac{1}{E^2} \sinh^2r(t) v^r(t)^2=1+\dfrac{l^2}{E^2}+O(e^{-(\beta-2) r}),$$ which shows that the terms $c_\pm(0)$ are strictly positive on the initial data. In particular, there are positive constants $\e,M$ such that $M\ge c_\pm(0)\ge \e,$ for all initial data in the support of $f_0$. 

Hence, for any initial data in the support of the initial distribution function we bound  the radial coordinate along the geodesic flow $$\log(\e e^{Et}) \leq
    r(t)\leq \log(4Me^{Et}),$$ by applying the bounds on $c_\pm$. In particular, the radial coordinate along any geodesics emanating from the support of the initial distribution function satisfies $$ce^{Et}\leq \sinh r(t)\leq  C e^{Et},$$ where $c,C$ are positive constants depending only on the support of the initial distribution function. 
\end{proof}

As a corollary, we obtain uniform boundedness in time of the angular velocity $l$ along an arbitrary geodesic with initial data in the support of the initial distribution function.

\begin{corollary}\label{cor_unif_bound_angular_velocity}
Let $R_1>0$ be sufficiently large. The angular velocity $l(t)$ along a geodesic $\gamma$ in $\M\setminus \overline{B(R_1)}$ such that $\gamma(0)\in \supp(f_0)$ satisfies that for every $t\geq 0$, $$|l(t)|\leq L,$$ where $L>0$ depends only on $\supp(f_0)$.
\end{corollary}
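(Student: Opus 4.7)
The plan is to exploit that $\partial_\theta$ is an exact Killing field for $g_{\H^2}$, so on an asymptotically hyperbolic manifold the angular velocity $l = g(\partial_\theta, \dot\gamma)$ changes along the geodesic flow only through the deviation of $g$ from $g_{\H^2}$. Since this deviation is $O(e^{-\beta r})$ with $\beta > 2$, and since $r(t)$ grows exponentially by Lemma \ref{lemma_basic_estimate_radial_coord_along_time}, the total change of $l$ along the flow is finite and yields a uniform bound.

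Concretely, I would first reduce to the hyperbolic expression $\tilde l(t) := v^\theta(t) \sinh^2 r(t)$. By Lemma \ref{lemma_estimates_veloc_coord_and_almost_cons_quant}, $l(t) - \tilde l(t) = O(e^{-(\beta-1) r(t)})$, which is uniformly bounded (since $r(t)$ is bounded below along the flow by Lemma \ref{lemma_basic_estimate_radial_coord_along_time}), so it suffices to control $|\tilde l|$. Differentiating $\tilde l$ along the geodesic flow and applying Lemma \ref{lemma_geod_eqns_ah_mfld} gives
\begin{align*}
\frac{d\tilde l}{dt} &= \dot v^\theta \sinh^2 r + 2 v^\theta v^r \sinh r \cosh r \\
&= \bigl(-2\coth r\, v^r v^\theta + O(e^{-\beta r})\bigr)\sinh^2 r + 2 v^\theta v^r \sinh r \cosh r \\
&= O(e^{-(\beta-2)r}),
\end{align*}
where the two $v^r v^\theta \sinh r \cosh r$ terms cancel exactly. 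This cancellation reflects the conservation of $\tilde l$ under the hyperbolic geodesic flow, and it is where the assumption $\beta > 2$ becomes essential.

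Finally, using the lower bound $\sinh r(s) \geq c e^{Es}$ from Lemma \ref{lemma_basic_estimate_radial_coord_along_time} together with $E \geq \alpha > 0$ on $\supp(f)$ (inherited from the standing assumption $\supp(f_0) \subset \D_\alpha$ in Theorem \ref{thm_decay_spatial_density_ah_manif}), I would integrate
\[
|\tilde l(t) - \tilde l(0)| \leq \int_0^t C\,e^{-(\beta-2)r(s)}\, ds \lesssim \int_0^\infty e^{-(\beta-2)\alpha s}\, ds < \infty,
\]
uniformly in $t$. Since $f_0$ is compactly supported, $|\tilde l(0)|$ is uniformly bounded over $\supp(f_0)$, and combining the estimates gives $|l(t)| \leq L$ for a constant $L$ depending only on $\supp(f_0)$.

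The main obstacle is ensuring the exact algebraic cancellation of the leading $\cosh r \sinh r$ contributions to $\frac{d\tilde l}{dt}$; without this cancellation, the naive estimate would produce an $O(e^{2r})$ factor multiplying the $O(e^{-\beta r})$ remainder from Lemma \ref{lemma_geod_eqns_ah_mfld}. It is precisely the condition $\beta > 2$ from Definition \ref{ahyp} that guarantees the resulting $e^{-(\beta-2)r}$ factor still decays along the flow, so that its integral against the exponentially growing lower bound for $r(s)$ converges uniformly in $t$.
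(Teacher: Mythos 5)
Your proposal is correct and follows essentially the same route as the paper: the paper's proof consists precisely of noting that Lemma \ref{lemma_geod_eqns_ah_mfld} gives $\frac{\mathrm{d}}{\mathrm{d}t}(v^{\theta}\sinh^2 r)=O(e^{-(\beta-2)r})$ (your cancellation of the $\coth r\, v^r v^\theta \sinh^2 r$ terms) and then integrating in time using the exponential lower bound on $\sinh r(t)$ from Lemma \ref{lemma_basic_estimate_radial_coord_along_time}. You simply spell out details the paper leaves implicit, such as the comparison $l = v^\theta\sinh^2 r + O(e^{-(\beta-1)r})$ from Lemma \ref{lemma_estimates_veloc_coord_and_almost_cons_quant} and the uniform bound on the initial data over $\supp(f_0)$.
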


\begin{proof}
By Lemma \ref{lemma_geod_eqns_ah_mfld}, we have $\frac{\mathrm{d}}{\mathrm{d}t}(v^{\theta}\sinh^2 r )=O(e^{-(\beta-2) r}).$ The corollary follows by using Lemma \ref{lemma_basic_estimate_radial_coord_along_time} to integrate this ode in time.
\end{proof}

We proceed to estimate the velocity coordinates of an arbitrary geodesic on $\M$ with initial data in the support of the initial distribution function.

\begin{proposition}
Let $R_1>0$ be sufficiently large. The velocity variables along a geodesic $\gamma$ in $\M\setminus \overline{B(R_1)}$ such that $\gamma(0)\in \supp(f_0)$ satisfy that for every $t\geq 0$, $$ |v^{\theta}\sinh r(t)|\leq \dfrac{C}{\exp{(Et)}},\qquad E^2-(v^r)^2\leq \dfrac{C}{\exp{(2Et)}},$$ where $C>0$ is a constant depending only on $\supp(f_0)$. 
\end{proposition}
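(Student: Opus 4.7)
The plan is to derive both estimates directly from the almost-conserved quantities set up in Lemma \ref{lemma_estimates_veloc_coord_and_almost_cons_quant}, combined with the lower bound on $\sinh r(t)$ from Lemma \ref{lemma_basic_estimate_radial_coord_along_time} and the uniform bound on $|l(t)|$ from Corollary \ref{cor_unif_bound_angular_velocity}. The strategy mirrors the one used on hyperbolic space, except that every exact identity is now replaced by an asymptotic identity with a remainder of order $e^{-\beta r}$ or $e^{-(\beta-1) r}$ coming from the perturbation $\Psi^* g - g_{\H^n}$, and the main technical point is to check that these remainders are absorbed by the already-proven exponential decay of $(\sinh r(t))^{-1}$.

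To estimate $|v^\theta \sinh r(t)|$, I would rewrite the second identity in \eqref{estimate_particle_energy_and_angular_velocity_ah} as
\[
v^\theta \sinh r(t) = \frac{l(t)}{\sinh r(t)} + \frac{O(e^{-(\beta-1)r(t)})}{\sinh r(t)}.
\]
By Corollary \ref{cor_unif_bound_angular_velocity} we have $|l(t)| \leq L$, and by Lemma \ref{lemma_basic_estimate_radial_coord_along_time} the lower bound $\sinh r(t) \geq c e^{Et}$ holds for all $t\geq 0$, which also implies $r(t) \geq Et + \log c$ for $R_1$ sufficiently large. Plugging in, the main term is bounded by $L/(c e^{Et})$ and the error by $C e^{-(\beta-1)(Et+\log c)}/(c e^{Et}) \leq C' e^{-\beta Et}$; since $\beta > 2 \geq 1$, this error is absorbed into a constant multiple of $e^{-Et}$, giving the first estimate.

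For the second estimate, I would use the first identity in \eqref{estimate_particle_energy_and_angular_velocity_ah}:
\[
E^2 - (v^r(t))^2 = \frac{l(t)^2}{\sinh^2 r(t)} + O(e^{-\beta r(t)}).
\]
Applying again $|l|\leq L$ and $\sinh r(t) \geq c e^{Et}$, the first term is bounded by $L^2/(c^2 e^{2Et})$. For the error, $e^{-\beta r(t)} \leq c^{-\beta} e^{-\beta Et} \leq c^{-\beta} e^{-2Et}$, using $\beta > 2$. Adding the two contributions yields the desired bound $C e^{-2Et}$.

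The main (mild) obstacle is bookkeeping the remainders uniformly in the support of $f_0$: one needs $R_1$ large enough so that every geodesic emanating from $\supp(f_0)$ has entered the region where Definition \ref{ahyp} applies and stays there, which is guaranteed by the non-trapping assumption together with the uniform lower bound $v^r > \alpha/2$ for large $r$ that was established in the proof of Lemma \ref{lemma_basic_estimate_radial_coord_along_time}. Once this is in place, the constants $c$, $C$, $L$ depend only on $\supp(f_0)$, and the argument above produces the claim.
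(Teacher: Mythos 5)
Your proposal is correct and follows essentially the same route as the paper: both derive the estimates from the almost-conserved relations of Lemma \ref{lemma_estimates_veloc_coord_and_almost_cons_quant}, the uniform bound $|l(t)|\leq L$ of Corollary \ref{cor_unif_bound_angular_velocity}, and the lower bound $\sinh r(t)\geq c e^{Et}$ of Lemma \ref{lemma_basic_estimate_radial_coord_along_time}, absorbing the $O(e^{-\beta r})$ remainders via $\beta>2$. Your explicit bookkeeping of the error terms just spells out what the paper leaves implicit.
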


\begin{proof}
By the compact support assumption of the initial distribution and Corollary \ref{cor_unif_bound_angular_velocity}, the angular velocity satisfies $|l(t)|\leq L$ among all geodesics determined by the support of the initial distribution. We obtain that the angular velocity coordinate decays exponentially $$|\sinh rv^{\theta}(t)|=\dfrac{|l|}{\sinh r(t)}+O(e^{-\beta r})\leq \dfrac{C}{\exp{(Et)}}, $$ by using Lemma \ref{lemma_estimates_veloc_coord_and_almost_cons_quant}. Furthermore, we obtain that the radial velocity coordinate converges exponentially to the particle energy $$E^2-(v^r)^2=\dfrac{l^2}{\sinh^2 r}+O(e^{-\beta r})\leq \frac{C}{\exp{(2Et)}},$$ by using the identity relating the particle energy and the angular velocity in Lemma \ref{lemma_estimates_veloc_coord_and_almost_cons_quant}. 
\end{proof}

In particular, for every Vlasov field initially supported on $\D_{\alpha}$, we obtain a uniform exponential decay estimate among all geodesics emanating from the support of the initial distribution function.

\begin{corollary}
Let $\alpha>0$. Let $R_1>0$ be sufficiently large. Let $f_0$ be a regular initial data for the Vlasov equation on hyperbolic space that is compactly supported on $\D_{\alpha}$. The velocity variables along a geodesic $\gamma$ in $\M\setminus \overline{B(R_1)}$ such that $\gamma(0)\in \supp(f_0)$ satisfy that for every $t\geq 0$,
\begin{equation}\label{estimate_velocity_coordinate_ah_space}
      |v^{\theta}\sinh r(t)|\leq \dfrac{C}{\exp(\alpha t)},\qquad  \bigg(E^2- \dfrac{C}{\exp{(2\alpha t)}}\bigg)^{\frac{1}{2}}\leq |v^r(t)|\le E,   
\end{equation}
where $C>0$ is a constant depending only on $\supp(f_0)$. 
\end{corollary}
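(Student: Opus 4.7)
The plan is to deduce this corollary directly from the preceding proposition together with the conservation of the particle energy $E$ along the geodesic flow. Since $f_0$ is compactly supported on $\D_\alpha$, every initial datum $(x_0,v_0)\in \supp(f_0)$ satisfies $E(x_0,v_0)=g_{x_0}(v_0,v_0)^{1/2}\geq \alpha$. Because $E$ is a first integral of the geodesic equations (it is conserved under the Hamiltonian flow associated to $H=\tfrac12 E^2$), the same lower bound $E\geq \alpha$ persists along the entire trajectory $\gamma(t)$.

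First I would invoke the preceding proposition, which applies to any geodesic $\gamma$ in $\M\setminus\overline{B(R_1)}$ with initial data in $\supp(f_0)$ and provides
\[
|v^{\theta}\sinh r(t)|\leq \dfrac{C}{\exp(Et)}, \qquad E^2-(v^r(t))^2\leq \dfrac{C}{\exp(2Et)},
\]
with the constant $C>0$ depending only on $\supp(f_0)$. Substituting $E\geq \alpha$ into the exponential factors immediately yields
\[
|v^{\theta}\sinh r(t)|\leq \dfrac{C}{\exp(Et)}\leq \dfrac{C}{\exp(\alpha t)},\qquad E^2-(v^r(t))^2\leq \dfrac{C}{\exp(2\alpha t)}.
\]

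From the second inequality, I would rearrange to obtain $(v^r(t))^2\geq E^2 - C\exp(-2\alpha t)$, whence $|v^r(t)|\geq \bigl(E^2-C\exp(-2\alpha t)\bigr)^{1/2}$, which is the desired lower bound on $|v^r|$. The upper bound $|v^r(t)|\leq E$ is immediate from the identity $E^2=(v^r)^2+\tfrac{l^2}{\sinh^2 r}+O(e^{-\beta r})$ in Lemma \ref{lemma_estimates_veloc_coord_and_almost_cons_quant}: the non-negativity of the remaining terms (up to the error term, which can be absorbed by choosing $R_1$ large enough so that $O(e^{-\beta r})$ is controlled by a small perturbation of $E$) forces $(v^r)^2\leq E^2$.

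There is essentially no obstacle here beyond bookkeeping: the corollary is a direct specialization of the proposition to the invariant set $\D_\alpha$, exploiting the fact that the hyperbolicity rate $E$ appearing in the exponents is bounded below by the energy floor $\alpha$ on $\supp(f_0)$. The only minor subtlety is ensuring that the asymptotic error term $O(e^{-\beta r})$ in the identity for $E^2$ does not spoil the upper bound $|v^r|\leq E$; this is handled by taking $R_1$ sufficiently large so that the error is negligible compared to the pointwise identity, which is precisely the regime in which the preceding proposition is already stated.
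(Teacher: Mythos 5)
Your argument is exactly the paper's (implicit) proof: the corollary is stated without separate proof precisely because it follows from the preceding proposition by inserting the lower bound $E\geq\alpha$, which persists by conservation of the particle energy on the invariant set $\D_\alpha$, just as in the hyperbolic-space analogue. Your extra remark about the $O(e^{-\beta r})$ error in the bound $|v^r|\le E$ is a fair observation (strictly one only gets $|v^r|\le E+O(e^{-\beta r})$, harmless for all subsequent uses), but otherwise the route coincides with the paper's.
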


\subsection{Decay for Vlasov fields supported on $\D_{\alpha}$}\label{subsection_decay_spatial_density_AH}

In this subsection, we prove a general dispersion estimate for the spatial density induced by a Vlasov field on a Riemannian manifold $\M$ under the assumptions of Theorem \ref{thm_decay_spatial_density_ah_manif}. We start by introducing some notation. Let $(\M,g)$ be asymptotically hyperbolic and non-trapping. By definition, there exists a compact set $K\subset \M$, $R_0>0$ and a diffeomorphism $\Psi:\H^n\setminus \overline{B(R_0)}\to \M\setminus K$, satisfying the conditions in Definition \ref{ahyp}. For $r>R_0$ we define $K_r:=K\cup \Psi(\overline{B(r)}\setminus \overline{B(R_0)})$, and $\widehat{K}_r:=\pi^{-1}(K_r)$, where we recall that $\pi:T\M\to\M$ is the canonical projection. Since $(\M,g)$ is asymptotically hyperbolic, as $r\to \infty$, the spaces $(\M\setminus K_r,g)$ and $(\H^n\setminus \overline{B(r)}, g_{\H^n})$ are $C^2$-close under the map $\Psi$.

As in Subsection \ref{subsection_decay_derivatives_spatial_density_dimension_two}, we introduce the domain of integration of the spatial density. For $x\in \M$ and $t\geq 0$ we define 
$$\Omega(t,x)=\Big\{v\in T_x\M:f(t,x,v)\ne 0\Big\}.$$
Since $f(t,x,v)=f_0(\phi_{-t}(x,v))$, we have that 
$$\Omega(t,x)=\Big\{v\in T_x\M: \text{there exists $(x',v')\in \supp(f_0)$ such that $\phi_t(x',v')=(x,v)$} \Big\}.$$
Observe that 
\begin{equation*}\label{eq:omega}
   \rho(f)(t,x)=\int_{\Omega(x,t)}f(t,x,v)\dvol_{T_x\M}(v). 
\end{equation*}

In order to prove the exponential decay in time of the spatial density, we proceed as in Subsection \ref{subsection_decay_derivatives_spatial_density_dimension_two} to establish the decay of the volume of the velocity support of the distribution function $\Omega(t,x)$. The following lemma states that if $(\M,g)$ is asymptotically hyperbolic and non-trapping, then geodesics escape to infinity uniformly. 

\begin{lemma}\label{lem:ahyp}
Let $(\M,g)$ be asymptotically hyperbolic and non-trapping, and $Q\subset \M$ a compact set. Assume that $f_0$ is compactly supported and its support lies on $\D_\alpha$, for some $\alpha>0$. Then there exists $T>0$ such that $\rho(f)(t,x)=0$, for every $x\in Q$ and $t\geq T$.
\end{lemma}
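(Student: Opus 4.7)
The plan is to show that every geodesic emanating from $\supp(f_0)$ eventually leaves the compact set $Q$ and never returns, with an escape time that is uniform over $\supp(f_0)$. Once established, this gives $\phi_t(\supp(f_0)) \cap \pi^{-1}(Q) = \emptyset$ for all $t \geq T$, and since $\supp f(t,\cdot,\cdot) = \phi_t(\supp(f_0))$ by the method of characteristics, one obtains $\rho(f)(t,x) = 0$ for $x \in Q$. The core mechanism combines non-trapping (which forces each geodesic to reach the far region) with the asymptotic hyperbolic radial escape (which prevents return), and finally compactness of $\supp(f_0)$ to uniformize.

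First, I would enlarge $R_1 > R_0$ so that Lemma \ref{lemma_basic_estimate_radial_coord_along_time} and Corollary \ref{cor_unif_bound_angular_velocity} apply in $\M \setminus \overline{B(R_1)}$, and so that $Q \subset K_{R_1}$. Fix $(x_0, v_0) \in \supp(f_0)$ and set $E := E(x_0, v_0) \geq \alpha$. Applying the non-trapping hypothesis to the unit vector $(x_0, v_0/E) \in T^1\M$, the orbit is unbounded in $T^1\M$. Since the fibers $T^1_x \M$ are compact spheres, unboundedness in $T^1\M$ projects to unboundedness of $t \mapsto \pi\phi_t(x_0,v_0)$ in $\M$, so there is a first time $\tau_0(x_0, v_0)$ at which the geodesic enters $\M \setminus \overline{B(R_1)}$.

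Second, I would establish a ``no return'' claim: for every $t \geq \tau_0$, we have $\pi\phi_t(x_0,v_0) \in \M \setminus K_{R_1}$, and in fact the geodesic escapes exponentially. The conservation of $E$ and the approximate conservation of $l$ from Lemma \ref{lemma_estimates_veloc_coord_and_almost_cons_quant} together with Corollary \ref{cor_unif_bound_angular_velocity} give uniform bounds $E \geq \alpha$ and $|l| \leq L$ at time $\tau_0$, where $L$ depends only on $\supp(f_0)$. Applying the argument of Lemma \ref{lemma_basic_estimate_radial_coord_along_time} to the shifted data $\phi_{\tau_0}(x_0,v_0)$ — whose proof via the ODE for $\cosh r$ and the quantities $c_\pm = \cosh r \pm E^{-1}\sinh r\, v^r$ depends only on a large initial radius, $E$ bounded below, and $|l|$ bounded above — yields $\sinh r(\pi\phi_{\tau_0 + s}(x_0,v_0)) \geq c e^{Es}$ for all $s \geq 0$. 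Hence the geodesic is permanently expelled from $K_{R_1} \supset Q$ after time $\tau_0$. Then, by continuous dependence of the geodesic flow on initial data and the openness of $\M \setminus \overline{B(R_1)}$, the map $(x_0,v_0) \mapsto \tau_0(x_0,v_0)$ is upper semicontinuous, hence bounded above on the compact set $\supp(f_0)$ by some finite $T$; for every $t \geq T$ we conclude $\pi\phi_t(\supp(f_0)) \cap Q = \emptyset$.

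The main obstacle is the ``no return'' step: Lemma \ref{lemma_basic_estimate_radial_coord_along_time} is stated for geodesics with initial data in $\supp(f_0)$, so one must verify that its proof transplants faithfully to the shifted initial condition $\phi_{\tau_0}(x_0,v_0)$. The key points to check are (i) that the angular velocity at time $\tau_0$ is uniformly bounded over $(x_0,v_0) \in \supp(f_0)$, which follows from the almost-conservation of $l$ in Lemma \ref{lemma_estimates_veloc_coord_and_almost_cons_quant} together with compactness of the initial support, (ii) that $c_\pm(\tau_0)$ are uniformly positive, which follows from the identity $c_+ c_- = 1 + l^2/E^2 + O(e^{-(\beta-2)r})$ and the fact that $|v^r|/E \leq 1$ forces both $c_\pm$ to be positive, and (iii) that the Gronwall/Riccati error terms $O(e^{-(\beta-1)r})$ remain integrable because $r(\tau_0) \geq R_1$ can be taken as large as needed. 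Thus non-trapping delivers each geodesic into the asymptotic escape regime, and asymptotic hyperbolicity traps it there outgoing.
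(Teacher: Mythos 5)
Your overall strategy coincides with the paper's: non-trapping forces every vector in $\supp(f_0)\subset\D_\alpha$ to reach the far region, a no-return property of the far region keeps it away from $Q$ afterwards, and compactness of $\supp(f_0)$ together with upper semicontinuity of the first entry time gives the uniform $T$. The normalization to unit vectors, the u.s.c.\ argument, and the reduction to $\supp f(t,\cdot,\cdot)=\phi_t(\supp f_0)$ are all correct and are exactly what the paper does (more tersely) when it defines $T$ using compactness of $\supp(f_0)\subset\D_\alpha$.

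The gap is in your no-return step. You assert that the proof of Lemma \ref{lemma_basic_estimate_radial_coord_along_time} ``transplants faithfully'' to the shifted data $\phi_{\tau_0}(x_0,v_0)$, and in your point (i) you justify a uniform bound on the angular velocity at time $\tau_0$ by the almost-conservation of $l$. But almost-conservation (Lemma \ref{lemma_estimates_veloc_coord_and_almost_cons_quant}, Corollary \ref{cor_unif_bound_angular_velocity}) is only available while the geodesic is in the far region; on $[0,\tau_0]$ it lies in $K_{R_1}$, partly inside the core $K$ where $(r,\theta)$ and hence $l$ are not even defined, and in the annulus the drift $O(e^{-(\beta-2)r})$ is not small while the transit time is not yet known to be uniformly bounded. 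The bound that is available for free at the entry radius is $|l(\tau_0)|\le |\partial_\theta|_g\,E\lesssim \sinh(R_1)E_{\max}$, which grows with $R_1$; this is too weak to launch the argument of Lemma \ref{lemma_basic_estimate_radial_coord_along_time}, whose very first step ($v^r\ge\alpha/2$ in the far region, hence $r(\tau_0+s)\ge r(\tau_0)+\frac{\alpha}{2}s$ and integrability of the error terms) requires $l^2/\sinh^2 r\ll\alpha^2$ already at $r=r(\tau_0)=R_1$; a geodesic may well enter the far region almost tangentially, with $v^r(\tau_0)$ near $0$. So the transplant is not faithful as written. Note also that the exponential escape rate is more than this lemma needs: it suffices that a geodesic with $E\in[\alpha,E_{\max}]$ crossing $r=R_1$ outward never returns, and this can be obtained with no control on $l$ at all, e.g.\ from the convexity $\frac{\mathrm{d}^2}{\mathrm{d}t^2}\cosh r=E^2\cosh r+O(e^{-(\beta-1)r})>0$ for $r\ge R_1$ with $R_1$ large (the error is uniform by Lemma \ref{lemma_estimates_veloc_coord_and_almost_cons_quant} and Lemma \ref{lemma_geod_eqns_ah_mfld}), combined with $\frac{\mathrm{d}}{\mathrm{d}t}\cosh r=\sinh r\,v^r\ge 0$ at the entry time; this is the role played in the paper's proof by the lower bound on the radial velocity in the far region. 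With the no-return step repaired in this weaker form, the remainder of your argument goes through and agrees with the paper's.
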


\begin{proof} From the identity \eqref{identity_relate_particle_energy_angular_velocity_1}, we have $v^r>c>0$ in the far-away region of hyperbolic space, for $c>0$. Similarly, from the first equation in \eqref{estimate_particle_energy_and_angular_velocity_ah}, we have $v^r>c>0$ in the far-away region of $(\M,g)$, for $c>0$. In other words, the same property holds for $\Phi^*g$-geodesics in $\H^n\setminus B(r)$ for sufficiently large $r$, and therefore, if a $\Psi^*g$-geodesic enters the region $\H^n\setminus B(r)$, it will not escape from it. Equivalently, if a $g$-geodesic enters $\M\setminus K_r$, it will stay in $\M\setminus K_r$. We choose $r$ large enough so that $Q$ and $\pi(\supp (f_0))$ are subsets of $K_r$. 

Since $(\M,g)$ is non-trapping, any non-zero vector in $\widehat{K_r}$ eventually enters $T\M\setminus \widehat{K_r}$. Define $T$ as the  minimum time that a vector in $\supp(f_0)\subset \widehat{K_r}$ needs to flow to enter $T\M\setminus \widehat{K_r}$; this is well defined since $\supp(f_0)$ is compact and a subset of $\D_\alpha$. Then, if $t\ge T$ and $(x',v')\in \supp(f_0)$ we have that $\phi_t(x',v')\in T\M\setminus \widehat{K_r}$. We conclude that if $t\geq T$ and $x\in Q$ $$\Omega(t,x)=\Big\{v\in T_x\M: \text{there exists $(x',v')\in \supp(f_0)$ such that $\phi_t(x',v')=(x,v)$} \Big\}=\emptyset,$$ and therefore $\rho(f)(t,x)=0$.
\end{proof}

\begin{remark}
Using the previous lemma, we can obtain a \emph{non-optimal} decay estimate for the spatial density appealing to a geometric argument based on Rauch's comparison theorem and the hyperbolic law of cosines. See Appendix \ref{app_subsection_decay_spatial_density_AH} for a short proof of a non-optimal decay estimate for Vlasov fields on non-trapping asymptotically hyperbolic Riemannian manifolds. 
\end{remark}

Finally, we proceed to prove Theorem \ref{thm_decay_spatial_density_ah_manif}. First we obtain the estimate \eqref{estimate_spatial_density_no_derivatives_ah} and then the estimate \eqref{estimate_spatial_density_with_derivatives_ah}. 

\begin{proof}[Proof of the estimate (\ref{estimate_spatial_density_no_derivatives_ah})]

Let us first introduce some notation. Given $x\in \M$ and $t\geq 0$ we define $$\Omega(t,x):=\Big\{v\in T_x\M:f(t,x,v)\ne 0\Big\}.$$ Observe that the set $\Omega(t,x)$ can be considered as the domain of integration of the spatial density. Since $f(t,x,v)=f_0(\phi_{-t}(x,v))$, we have that
\begin{equation}\label{Omega_5}
    \Omega(t,x)=\Big\{v\in T_x\M: \text{there exists $(x',v')\in \supp(f_0)$ such that $\phi_t(x',v')=(x,v)$} \Big\},
\end{equation}
and that $\|f(t)\|_{L^{\infty}_{x,v}}=\|f(0)\|_{L^{\infty}_{x,v}}$.

We proceed to estimate the spatial density $\rho(f)(t,x)$. Fix $x=(r,\theta)\in\M$ and $t\ge 0$. It follows from \eqref{estimate_velocity_coordinate_ah_space} and \eqref{Omega_5} that if  $(v^\theta,v^r)\in \Omega(t,x)$, then 
\begin{equation}\label{equation_decay_ptheta_ah}
      |\sinh rv^{\theta}|\leq \dfrac{C}{\exp( \alpha t)},\qquad  \bigg(E^2- \dfrac{C}{\exp{(2\alpha t)}}\bigg)^{\frac{1}{2}}\leq |v^r|\le E,
\end{equation}
and therefore $$\vol_{T_x\M}(\Omega(t,x))=  \int_{\Omega(t,x)}\dvol_{T_x\M}(v) \le \frac{D_0}{\exp(\alpha t)},$$ where $D_0$ is a positive constant depending on the support of $f_0$. Finally, we have
$$ |\rho(f)(t,x)|=\Big|\int f(t,x,v)\dvol_{T_x\M}(v)\Big|\leq \|f_0\|_{L^{\infty}_{x,v}} 
    \ \int_{\Omega(t,x)}\dvol_{T_x\M}(v) 
    \leq   \dfrac{D_0}{\exp(\alpha t)}\|f_0\|_{L^{\infty}_{x,v}}.$$ 
    \end{proof}

Next, we prove decay estimates for derivatives of the spatial density induced by Vlasov fields supported on $\D_{\alpha}$. First, we consider the orthogonal parallel frame $\{\dot{\gamma}, N\}$ along an arbitrary geodesic $\gamma$. We write this frame in local coordinates by
\begin{equation}\label{frame_parallel_orthogonal_dimension_two_AH}
\dot{\gamma}:=v^r\partial_{r}+\sinh r v^{\theta}\dfrac{\partial_{\theta}}{\sinh r}, \qquad N:=-N^r\partial_{r}+N^{\theta}\dfrac{\partial_{\theta}}{\sinh r}, 
\end{equation}
where $N$ defined as the unique vector field such that $$g(N,N)=g(\dot{\gamma},\dot{\gamma}),\qquad g(N,\dot{\gamma})=0,$$ and $\{\dot{\gamma},N\}$ is a positively oriented basis of $T_x\M$. Moreover, we have the estimates
\begin{alignat}{3}
N^r&= \sinh r v^{\theta}+O(e^{-\beta r}), \qquad &&\partial_{v^r}N^r=O(e^{-\beta r}),\qquad && \partial_{v^{\theta}}N^r= \sinh r +O(e^{-\beta r}),\label{estimates_Nr}\\ 
N^{\theta}&= v^{r}+O(e^{-\beta r}),\qquad &&\partial_{v^r}N^{\theta}= 1+O(e^{- \beta r}),\qquad && \partial_{v^{\theta}}N^{\theta}= O(e^{-\beta r}),\label{estimates_Ntheta}
\end{alignat}
since $(\M,g)$ is asymptotically hyperbolic. We also observe that the frame $\{\partial_r,(\sinh r)^{-1} \partial_{\theta}\}$ can be written in terms of the parallel frame $\{\dot{\gamma},N\}$ as 
$$ \partial_r =\frac{N^{\theta}\dot{\gamma}-\sinh rv^{\theta}N}{N^\theta v^{r}+\sinh rv^{\theta}N^{r}},\qquad \dfrac{\partial_{\theta}}{\sinh r}=\frac{N^r\dot{\gamma}+v^rN}{N^r\sinh r v^{\theta}+v^rN^{\theta}}.$$

Before proving the estimates in \eqref{estimate_spatial_density_with_derivatives_ah}, we obtain a technical lemma concerning the decay of the solutions $q_u$ and $q_s$ to the Riccati equation \eqref{Riccati_eqn_invariant_geo}. The estimates in the following lemma will be required to bound the error terms coming out when integrating by parts the vertical part of the unstable vector field $U$. 

\begin{lemma}\label{lemma_estimates_riccati_eqn}
The solution $q_u:C^{2-}(T\M)\to (0,\infty)$ to the Riccati equation \eqref{Riccati_eqn_invariant_geo} satisfies
\begin{alignat}{3}
\Big|\frac{q_u}{E}-1\Big|=O(e^{-\beta r}),\qquad \Big|\partial_{v^r}\Big(\frac{q_u}{E}\Big)\Big|=O(e^{-\beta r}),\qquad \Big|\frac{1}{\sinh r}\partial_{v^{\theta}}\Big(\frac{q_u}{E}\Big)\Big|=O(e^{-\beta r}).
\end{alignat}
\end{lemma}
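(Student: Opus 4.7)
The plan is to exploit the fact that on hyperbolic space $q_u = E$ identically, so on an asymptotically hyperbolic manifold the rescaled error $w := q_u/E - 1$ should decay at the rate $O(e^{-\beta r})$ inherited from the deviation of $K_g$ from $-1$. Using the invariance $XE = 0$ and the Riccati equation \eqref{Riccati_eqn_invariant_geo}, a direct manipulation gives the scalar equation
\begin{equation*}
Xw + 2Ew + Ew^2 = -E(1+K_g),
\end{equation*}
whose right-hand side is $O(e^{-\beta r})$ in the far-away region by Definition \ref{ahyp} (the intrinsic curvature inherits the exponential decay of $\Psi^*g - g_{\H^n}$ because $j=2$ derivatives are included).

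For the first estimate I would fix $(x,v)$ with $r_0 := r(x)$ large and set $u(t) := w(\phi_t(x,v))$. By Lemma \ref{lemma_basic_estimate_radial_coord_along_time}, the forward orbit satisfies $r(t) \geq r_0 + Et$, so the forcing $g(t) := -E(1+K_g(\phi_t(x,v)))$ obeys $|g(t)| \lesssim e^{-\beta r_0}\,e^{-\beta E t}$. Integrating the ODE yields the Duhamel identity
\begin{equation*}
u(0) = e^{2ET}\,u(T) - \int_0^T e^{2Es}\bigl(g(s) - E u(s)^2\bigr)\, ds,
\end{equation*}
and I would pass to the limit $T\to\infty$. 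Using the a priori boundedness of $q_u$ (from pinched negative curvature and the $C^{2-}$ regularity of Hurder--Katok \cite{HK90}) together with a bootstrap on $u(s) = O(e^{-\beta r_0}e^{-\beta E s})$, the boundary term $e^{2ET}u(T) = O(e^{(2-\beta)ET})$ vanishes and the integral evaluates to $O(e^{-\beta r_0})$, both crucially because $\beta > 2$.

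For the derivative estimates I would let $V$ denote either $\partial_{v^r}$ or $(\sinh r)^{-1}\partial_{v^\theta}$ and apply $V$ to the scalar equation for $w$. Since $V K_g = 0$ (as $K_g$ depends only on $x$) and the commutator $[V,X]$ is a first-order operator whose coefficients are uniformly controlled in the orthonormal frame of Section \ref{section_preliminaries_hyperbolic}, one obtains a linear transport ODE
\begin{equation*}
X(Vw) + 2E(Vw) = \mathcal F,
\end{equation*}
where $\mathcal F$ is a combination of $w$, $Vw$, $VE$, and a horizontal derivative of $w$, each controllable by $O(e^{-\beta r})$ (the horizontal derivative is treated by a parallel Duhamel argument obtained by differentiating the Riccati equation in horizontal directions). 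The same integration scheme as before then delivers $|Vw| = O(e^{-\beta r})$.

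The hard part will be closing the bootstrap for the boundary term $e^{2ET} u(T)$ in the Duhamel identity, which relies decisively on the sharp interplay between the exponent $\beta > 2$ from Definition \ref{ahyp} and the linear coefficient $2E$ produced by the linearization, combined with the linear-in-$t$ lower bound for $r(t)$. A secondary challenge is that the commutator $[V,X]$ couples $Vw$ with a horizontal derivative of $w$, requiring one to run an auxiliary Duhamel argument on the horizontal derivative before closing the vertical one.
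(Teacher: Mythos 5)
Your reduction to the scalar equation $Xw+2Ew+Ew^{2}=-E(1+K_g)$ with $w=q_u/E-1$ is correct, and the forcing estimate $|1+K_g|=O(e^{-\beta r})$ does follow from Definition \ref{ahyp} since two derivatives of the metric error are controlled. The genuine gap is exactly at the point you flag as ``the hard part'': the Duhamel identity is run \emph{forward} in time with the growing weight $e^{2Es}$, and the boundary term $e^{2ET}u(T)$ cannot be handled the way you propose. A priori, the pinching bounds only give $\sqrt{\kappa_2}\,E\le q_u\le\sqrt{\kappa_1}\,E$, i.e.\ $u(T)=O(1)$, so $e^{2ET}u(T)$ is a priori unbounded; and the bootstrap does not close, because under the hypothesis $|u(s)|\le A e^{-\beta r_0}e^{-\beta Es}$ on $[0,T]$ the boundary term at time $t$ is bounded by $Ae^{-\beta r_0}e^{-\beta Et}$ \emph{with the same constant $A$} (the weight $e^{2E(T-t)}$ exactly consumes the assumed margin), so no improvement is obtained and there is no base case to start a continuity argument from. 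The standard repair is to drop the boundary term and solve the integral equation $u(t)=-\int_t^\infty e^{2E(s-t)}\bigl(g(s)-Eu(s)^2\bigr)\,ds$ by a fixed point in the weighted space; this does produce \emph{a} solution of the Riccati equation along the forward orbit with the stated $O(e^{-\beta r})$ decay, but it does not show that this solution is $q_u$. That identification is the crux: $q_u$ is defined through backward-in-time limits (it is the unstable solution, the limit of pole solutions from the remote past, equivalently the unique solution confined to $[\sqrt{\kappa_2}E,\sqrt{\kappa_1}E]$ for \emph{all} $t\in\R$), and any two Riccati solutions along the same orbit differ forward in time by $\asymp e^{-2Et}$, which is much larger than the target $e^{-\beta(r_0+Et)}$ since $\beta>2$; so nothing in a purely forward-orbit analysis excludes $q_u$ differing from your constructed solution by exactly such a term. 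This is where the paper's proof takes a different route: it does not integrate the Riccati equation at all, but invokes the perturbation theory of the unstable invariant distribution (Hurder--Katok regularity plus a graph-transform/stable-manifold argument in the style of Hintz \cite{H21}), where the unstable object is built from the past of the orbit and its dependence on the $e^{-\beta r}$ convergence of the generator is controlled directly.

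Two secondary points. First, the lower bound $r(t)\ge r_0+Et$ you import from Lemma \ref{lemma_basic_estimate_radial_coord_along_time} is stated for geodesics emanating from $\supp(f_0)$; for a general point $(x,v)$ with $r$ large you need (and should state) an outgoing condition on the forward orbit, since for incoming vectors the forcing along the forward orbit is not $O(e^{-\beta r_0}e^{-\beta Et})$. Second, in the derivative estimates the commutator $[V,X]$ for vertical $V$ produces horizontal derivatives of $w$ with $O(1)$ coefficients (cf.\ \eqref{commuting_relations_neg_curv}), so the auxiliary transport argument you sketch inherits the same directional difficulty as the zeroth-order estimate; it cannot be closed independently of the issue above.
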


\begin{proof}
By Hurder--Katok \cite{HK90} the function $q_u$ belongs to $C^{2-}(T\M)$. We recall that the function $q_u$ parametrizes the unstable invariant distribution $E_u$ of the geodesic flow in $(\M,g)$. See Section \ref{subsec_jacobi_neg} for more details. The distribution $E_u$ is also the tangent of the unstable manifolds of the geodesic flow. By the standard stable manifold theorem \cite[Chapter 17]{KH} applied to the geodesic flow in $(\M,g)$, the function $q_u$ appears when proving the contraction property of the action of the flow map $\phi_t$ on unstable graphs along the flow. Since $(\M,g)$ is asymptotically hyperbolic, the generator of the geodesic flow $X$ converges like $e^{-\beta r}$ to the generator $X_{\H^2}$ of the geodesic flow in hyperbolic space. Thus, the unstable manifolds associated to the geodesic flow in $(\M,g)$ converge like $e^{-\beta r}$ to the unstable manifolds associated to the geodesic flow in $(\H^2,g_{\H^2})$. In particular, we obtain the estimates stated above. 
\end{proof}

The proof of Lemma \ref{lemma_estimates_riccati_eqn} follows the same strategy applied by Hintz \cite{H21} to show a stable manifold theorem for a $C^k$ normally hyperbolic flow converging to a $C^k$ stationary flow. Specifically, \cite{H21} shows that the stable and unstable manifolds of a perturbation of the normally hyperbolic flow converge in $C^k$ to their stationary counterparts.

Finally, we proceed to prove the pointwise decay estimates for Vlasov fields supported on $\D_{\alpha}$.

\begin{proof}[Proof of the estimate (\ref{estimate_spatial_density_with_derivatives_ah})]

Since every spatial derivative of the spatial density is equal to the spatial density of the corresponding horizontal derivative of the distribution function, we have 
\begin{align*}
    \partial_{r}\rho(f)(t,x)&= \rho(\Hor_{(x,v)}(\partial_{r})f)(t,x)=:\rho(H_r f)(t,x),\\
    \dfrac{1}{\sinh r}\partial_{\theta}\rho(f)(t,x)&= \rho\Big(\Hor_{(x,v)}\Big(\dfrac{\partial_{\theta}}{\sinh r}\Big)f\Big)(t,x)=: \rho(H_{\theta}f)(t,x),
\end{align*}
where the horizontal vector fields $H_r$ and $H_{\theta}$ can be written as $$H_r =\frac{N^{\theta}X-\sinh rv^{\theta}H}{N^\theta v^{r}+\sinh rv^{\theta}N^{r}},\qquad H_{\theta}=\frac{N^rX+v^rH}{N^r\sinh r v^{\theta}+v^rN^{\theta}},$$ in terms of the vector fields $H=\Hor_{(x,v)}(N)$ and $X=\Hor_{(x,v)}(v)$. 

\textbf{Estimate for the derivative $(\sinh r)^{-1}\partial_{\theta}\rho(f)$.} By the previous decomposition of $H_{\theta}$, the angular derivative $(\sinh r)^{-1}\partial_{\theta}\rho(f)$ can be decomposed into
$$\dfrac{1}{\sinh r}\partial_{\theta}\rho(f)(t,x)= \rho\Big(\frac{N^rXf}{N^r\sinh r v^{\theta}+v^rN^{\theta}}\Big)+\rho\Big(\frac{v^rHf}{N^r\sinh r v^{\theta}+v^rN^{\theta}}\Big)=:A_1+B_1.$$

On the one hand, the first term $A_1$ can be written using the commuting vector field $tX+Y$ by $$A_1=\dfrac{1}{t}\int \frac{N^r(tX+Y)f}{N^r\sinh r v^{\theta}+v^rN^{\theta}} \dvol_{T_x\M}(v)-\dfrac{1}{t}\int \frac{N^rYf}{N^r\sinh r v^{\theta}+v^rN^{\theta}} \dvol_{T_x\M}(v).$$
The second term in the RHS can be further decomposed as
\begin{align*}
    \int \frac{N^r}{N^r\sinh r v^{\theta}+v^rN^{\theta}}Yf \dvol_{T_x\M}(v)&=\int \frac{N^r v^r}{N^r\sinh r v^{\theta}+v^rN^{\theta}}\partial_{v^r}f \dvol_{T_x\M}(v)\\
    &\qquad +\int \frac{N^r v^{\theta}}{N^r\sinh r v^{\theta}+v^rN^{\theta}}\partial_{v^{\theta}}f \dvol_{T_x\M}(v)\\
    &=:C_1+C_2.
\end{align*}

Integrating by parts the first term $C_1$, we have
\begin{align*}
C_1&=\int \frac{N^r v^r}{N^r\sinh r v^{\theta}+v^rN^{\theta}}\partial_{v^r}f \dvol_{T_x\M}(v)\\
&=-\int \frac{\partial_{v^r}N^r v^r+N^r}{N^r\sinh r v^{\theta}+v^rN^{\theta}} f\dvol_{T_x\M}(v) \\
&\qquad \qquad+\int \frac{N^r v^r(\partial_{v^r}N^r\sinh r v^{\theta}+v^r\partial_{v^r}N^{\theta}+N^{\theta})}{(N^r\sinh r v^{\theta}+v^rN^{\theta})^2} f\dvol_{T_x\M}(v).
\end{align*}
We make use of the decay of the velocity support of the distribution together with \eqref{equation_decay_ptheta_ah}, \eqref{estimates_Nr}, and \eqref{estimates_Ntheta}, to obtain 
\begin{align*}
|C_1|&\lesssim \int \Big|\frac{N^r}{N^r\sinh r v^{\theta}+v^rN^{\theta}}\Big| f+ \Big|\frac{N^r v^r N^{\theta}}{ (N^r\sinh r v^{\theta}+v^rN^{\theta})^2}\Big| f\dvol_{T_x\M}(v)+e^{-2\alpha t}\|f_{0}\|_{L^{\infty}_{x,v}} \\
&\lesssim e^{-2\alpha t}\|f_{0}\|_{L^{\infty}_{x,v}}.
\end{align*}

Integrating by parts the second term $C_2$, we have
\begin{align*}
C_2&=\int \frac{N^r v^{\theta}}{N^r\sinh r v^{\theta}+v^rN^{\theta}}\partial_{v^{\theta}}f \dvol_{T_x\M}(v)\\
&=-\int \frac{\partial_{v^{\theta}}N^r v^{\theta}+N^r}{N^r\sinh r v^{\theta}+v^rN^{\theta}}f \dvol_{T_x\M}(v)\\
&\qquad\qquad +\int \frac{N^r v^{\theta}(\partial_{v^{\theta}}N^r\sinh r v^{\theta}+N^r\sinh r+v^r\partial_{v^{\theta}}N^{\theta})}{(N^r\sinh r v^{\theta}+v^rN^{\theta})^2}f \dvol_{T_x\M}(v).
\end{align*}
We make use of the decay of the velocity support of the distribution together with \eqref{equation_decay_ptheta_ah}, \eqref{estimates_Nr}, and \eqref{estimates_Ntheta}, to obtain 
\begin{align*}
|C_2|&\lesssim \int \Big|\frac{N^r}{N^r\sinh r v^{\theta}+v^rN^{\theta}}\Big|f \dvol_{T_x\M}(v)\\
&\qquad\qquad +\int \frac{(N^r)^2 |v^{\theta}\sinh r|}{(N^r\sinh r v^{\theta}+v^rN^{\theta})^2}f \dvol_{T_x\M}(v)+e^{-2\alpha t}\|f_{0}\|_{L^{\infty}_{x,v}}\\
&\lesssim e^{-2\alpha t}\|f_{0}\|_{L^{\infty}_{x,v}}.
\end{align*} 

Using the same arguments for the first term of $A_1$ and applying the estimates derived for $C_1$ and $C_2$, we obtain
\begin{align*}
    |A_1|&\leq 
\Big|\dfrac{1}{t}\int \frac{N^r}{N^r\sinh r v^{\theta}+v^rN^{\theta}}(tX+Y)f \dvol_{T_x\M}(v)\Big|+\dfrac{1}{t}|C_1|+\dfrac{1}{t}|C_2|\\
&\lesssim \dfrac{1}{te^{2\alpha t}}\|Yf_0\|_{L^{\infty}_{x,v}} + \dfrac{1}{te^{2\alpha t}}\|f_0\|_{L^{\infty}_{x,v}}\\
&\lesssim \dfrac{1}{te^{2\alpha t}}\|f_0\|_{W^{1,\infty}_{x,v}}.
\end{align*}

On the other hand, the second term $B_1$ can be written using the commuting unstable vector field $U$ by
\begin{align*}
    B_1&=\int \frac{v^r(H+q_uV)f}{N^r\sinh r v^{\theta}+v^rN^{\theta}} \dvol_{T_x\M}(v)-\int \frac{v^rq_uVf}{N^r\sinh r v^{\theta}+v^rN^{\theta}} \dvol_{T_x\M}(v),\\
    &=\int \dfrac{1}{e^{\int_0^t q_u d\tau}}\frac{v^rUf}{N^r\sinh r v^{\theta}+v^rN^{\theta}}\dvol_{T_x\M}(v)-\int\frac{v^rq_uVf}{N^r\sinh r v^{\theta}+v^rN^{\theta}} \dvol_{T_x\M}(v),
\end{align*}
where the second term in the RHS can be integrated by parts
\begin{align*}
    \int\frac{v^rq_uVf}{N^r\sinh r v^{\theta}+v^rN^{\theta}} \dvol_{T_x\M}(v)&=\int \frac{v^r N^{\theta}}{N^r\sinh r v^{\theta}+v^rN^{\theta}}q_u\dfrac{\partial_{v^{\theta}}f}{\sinh r} \dvol_{T_x\M}(v)\\
    &\qquad -\int \frac{v^r N^{r}}{N^r\sinh r v^{\theta}+v^rN^{\theta}} q_u\partial_{v^r}f \dvol_{T_x\M}(v)\\
    &=:D_1-D_2.
\end{align*}

Integrating by parts the first term $D_1$, we have
\begin{align*}
D_1&=\int \frac{v^r N^{\theta}E}{N^r\sinh r v^{\theta}+v^rN^{\theta}}\frac{q_u}{E}\dfrac{\partial_{v^{\theta}}f}{\sinh r} \dvol_{T_x\M}(v)\\
&=-\int \frac{\partial_{v^{\theta}}N^{\theta} v^r}{N^r\sinh r v^{\theta}+v^rN^{\theta}} \dfrac{q_u f}{\sinh r}+ \frac{v^r N^{\theta}\partial_{v^{\theta}}E}{N^r\sinh r v^{\theta}+v^rN^{\theta}}\frac{q_u}{E}\dfrac{\partial_{v^{\theta}}f}{\sinh r} \dvol_{T_x\M}(v)\\
&\qquad -\int \frac{v^r N^{\theta}E}{N^r\sinh r v^{\theta}+v^rN^{\theta}}\dfrac{1}{\sinh r}\partial_{v^{\theta}}\Big(\frac{q_u}{E}\Big)f \dvol_{T_x\M}(v)\\
&\qquad \qquad+\int \frac{N^{\theta} v^r(\partial_{v^{\theta}}N^r\sinh r v^{\theta}+N^r\sinh r+v^r\partial_{v^{\theta}}N^{\theta})}{(N^r\sinh r v^{\theta}+v^rN^{\theta})^2} \dfrac{q_u}{\sinh r}f\dvol_{T_x\M}(v).
\end{align*}
We make use of the decay of the velocity support of the distribution together with \eqref{equation_decay_ptheta_ah}, \eqref{estimates_Nr}, and \eqref{estimates_Ntheta}, to obtain 
\begin{align*}
|D_1|&\lesssim \int \Big|\frac{v^r N^{\theta} E}{N^r\sinh r v^{\theta}+v^rN^{\theta}}\dfrac{1}{\sinh r}\partial_{v^{\theta}}\Big(\frac{q_u}{E}\Big)\Big|f \dvol_{T_x\M}(v)\\
&\qquad +\int \frac{|N^{\theta}  v^r N^r q_u|}{(N^r\sinh r v^{\theta}+v^rN^{\theta})^2} f\dvol_{T_x\M}(v)+e^{-2\alpha t}\|f_{0}\|_{L^{\infty}_{x,v}} \\
&\lesssim e^{-2\alpha t}\|f_{0}\|_{L^{\infty}_{x,v}}.
\end{align*}

Integrating by parts the second term $D_2$, we have
\begin{align*}
D_2&=\int \frac{v^r N^{r}E}{N^r\sinh r v^{\theta}+v^rN^{\theta}} \frac{q_u}{E}\partial_{v^r}f \dvol_{T_x\M}(v)\\
&=-\int \frac{\partial_{v^r}N^{r} v^r+N^r}{N^r\sinh r v^{\theta}+v^rN^{\theta}} q_uf+\frac{v^r N^{r}E}{N^r\sinh r v^{\theta}+v^rN^{\theta}}\partial_{v^r}\Big(\frac{q_u}{E}\Big)f \dvol_{T_x\M}(v)\\
&\qquad+\int \frac{N^{r} v^r(\partial_{v^r}N^r\sinh r v^{\theta}+N^{\theta}+v^r\partial_{v^r}N^{\theta})}{(N^r\sinh r v^{\theta}+v^rN^{\theta})^2} q_uf\dvol_{T_x\M}(v)\\
&\qquad -\int \frac{v^r N^{r}\partial_{v^r}E}{N^r\sinh r v^{\theta}+v^rN^{\theta}} \frac{q_u}{E}f \dvol_{T_x\M}(v).
\end{align*}
We make use of the decay of the velocity support of the distribution together with \eqref{equation_decay_ptheta_ah}, \eqref{estimates_Nr}, and \eqref{estimates_Ntheta}, to obtain 
\begin{align*}
|D_2|&\lesssim \int \Big|\frac{N^r}{N^r\sinh r v^{\theta}+v^rN^{\theta}} q_u\Big|f + \Big|\frac{v^r N^{r}E}{N^r\sinh r v^{\theta}+v^rN^{\theta}}\partial_{v^r}\Big(\frac{q_u}{E}\Big)\Big|f \dvol_{T_x\M}(v) \\
&\qquad +\int \Big|\frac{N^{r} v^r N^{\theta}q_u}{(N^r\sinh r v^{\theta}+v^rN^{\theta})^2} \Big| f\dvol_{T_x\M}(v)+e^{-2\alpha t}\|f_{0}\|_{L^{\infty}_{x,v}} \\
&\lesssim e^{-2\alpha t}\|f_{0}\|_{L^{\infty}_{x,v}}.
\end{align*}

We obtain that 
$$B_1=\int \dfrac{1}{e^{\int_0^t q_u d\tau}}\frac{v^r}{N^r\sinh r v^{\theta}+v^rN^{\theta}}Uf\dvol_{T_x\M}(v)-D_1+D_2.$$ Using the same arguments for the first term of $B_1$ and applying the estimates derived for $D_1$ and $D_2$, we obtain
\begin{align*}
    |B_1|&\leq 
\Big|\int \dfrac{1}{e^{\int_0^t q_u d\tau}}\frac{v^r}{N^r\sinh r v^{\theta}+v^rN^{\theta}}Uf\dvol_{T_x\M}(v)\Big|+|D_1|+|D_2|\\
& \lesssim \dfrac{1}{e^{2\alpha t}}\|Uf_0\|_{L^{\infty}_{x,v}} +\frac{1}{e^{2\alpha t}}\|f_0\|_{L^{\infty}_{x,v}}\\
&\lesssim \dfrac{1}{e^{2\alpha t}}\|f_0\|_{W^{1,\infty}_{x,v}}.
\end{align*}
Therefore, we obtain that the angular derivative of the spatial density $(\sinh r)^{-1}\partial_{\theta}\rho(f)$ is bounded above by
$$|(\sinh r)^{-1}\partial_{\theta}\rho(f)|\leq |A_1|+|B_1|\lesssim \|f_0\|_{W^{1,\infty}_{x,v}}e^{-{2\alpha t}}.$$

\textbf{Estimate for the derivative $\partial_{r}\rho(f)$.} By the previous decomposition of $H_r$, the radial derivative $\partial_{r}\rho(f)$ can be decomposed into
$$\partial_{r}\rho(f)(t,x)=\rho\Big( \frac{N^{\theta}}{N^\theta v^{r}+\sinh rv^{\theta}N^{r}}Xf\Big)-\rho\Big(\frac{\sinh rv^{\theta}}{N^\theta v^{r}+\sinh rv^{\theta}N^{r}}Hf\Big)=:A_2-B_2.$$

First, we write $A_2$ using the commuting vector field $tX+Y$ by 
\begin{align*}
A_2&=\rho\Big( \frac{N^{\theta}}{N^\theta v^{r}+\sinh rv^{\theta}N^{r}}Xf\Big)\\
&=\dfrac{1}{t}\int \frac{N^{\theta}(tX+Y)f}{N^\theta v^{r}+\sinh rv^{\theta}N^{r}} \dvol_{T_x\M}(v)-\dfrac{1}{t}\int \frac{N^{\theta}Yf}{N^\theta v^{r}+\sinh rv^{\theta}N^{r}} \dvol_{T_x\M}(v),
\end{align*}
where the second term in the RHS can be decomposed as
\begin{align*}
    \int \frac{N^{\theta}}{N^\theta v^{r}+\sinh rv^{\theta}N^{r}}Yf \dvol_{T_x\M}(v)&=\int \frac{N^{\theta}v^r}{N^\theta v^{r}+\sinh rv^{\theta}N^{r}}\partial_{v^r}f \dvol_{T_x\M}(v)\\
    &\quad+\int \frac{N^{\theta}v^{\theta}}{N^\theta v^{r}+\sinh rv^{\theta}N^{r}}\partial_{v^{\theta}}f \dvol_{T_x\M}(v)\\
    &=E_1+E_2.
\end{align*}

Integrating by parts the first term, we have
\begin{align*}
E_1&=\int \frac{N^{\theta}v^r}{N^\theta v^{r}+\sinh rv^{\theta}N^{r}}\partial_{v^r}f \dvol_{T_x\M}(v)\\
&=-\int \frac{\partial_{v^r}N^{\theta}v^r+N^{\theta}}{N^\theta v^{r}+\sinh rv^{\theta}N^{r}}f \dvol_{T_x\M}(v)\\
&\qquad+\int \frac{N^{\theta}v^r(\partial_{v^r}N^\theta v^{r}+N^\theta+\sinh rv^{\theta}\partial_{v^r}N^{r})}{(N^\theta v^{r}+\sinh rv^{\theta}N^{r})^2}f \dvol_{T_x\M}(v).
\end{align*}
We make use of the decay of the velocity support of the distribution together with \eqref{equation_decay_ptheta_ah}, \eqref{estimates_Nr}, and \eqref{estimates_Ntheta}, to obtain 
\begin{align*}
|E_1|&\lesssim \int \Big|\frac{N^{\theta}}{N^\theta v^{r}+\sinh rv^{\theta}N^{r}}\Big|f + \frac{(N^{\theta})^2|v^r|}{(N^\theta v^{r}+\sinh rv^{\theta}N^{r})^2} f\dvol_{T_x\M}(v)+e^{-\alpha t}\|f_{0}\|_{L^{\infty}_{x,v}} \\
&\lesssim e^{-\alpha t}\|f_{0}\|_{L^{\infty}_{x,v}}.
\end{align*}
Integrating by parts the second term, we have
\begin{align*}
E_2&=\int \frac{N^{\theta}v^{\theta}}{N^\theta v^{r}+\sinh rv^{\theta}N^{r}}\partial_{v^{\theta}}f \dvol_{T_x\M}(v)\\
&=-\int \frac{\partial_{v^{\theta}}N^{\theta}v^{\theta}+N^{\theta}}{N^\theta v^{r}+\sinh rv^{\theta}N^{r}}f \dvol_{T_x\M}(v)\\
&\qquad +\int \frac{N^{\theta}v^{\theta}(\partial_{v^{\theta}}N^\theta v^{r}+\sinh rv^{\theta}\partial_{v^{\theta}}N^{r}+\sinh rN^{r})}{(N^\theta v^{r}+\sinh rv^{\theta}N^{r})^2}f \dvol_{T_x\M}(v).
\end{align*}
We make use of the decay of the velocity support of the distribution together with \eqref{equation_decay_ptheta_ah}, \eqref{estimates_Nr}, and \eqref{estimates_Ntheta}, to obtain 
\begin{align*}
|E_2|&\lesssim \int \Big|\frac{N^{\theta}}{N^\theta v^{r}+\sinh rv^{\theta}N^{r}}\Big|f \dvol_{T_x\M}(v)\\
&\qquad +\int \frac{|N^{\theta}N^{r}v^{\theta}\sinh r|}{(N^\theta v^{r}+\sinh rv^{\theta}N^{r})^2} f\dvol_{T_x\M}(v)+e^{-2\alpha t}\|f_{0}\|_{L^{\infty}_{x,v}} \\
&\lesssim e^{-\alpha t}\|f_{0}\|_{L^{\infty}_{x,v}}.
\end{align*}

Using the same arguments for the first term of $A_2$ and applying the estimates derived for $E_1$ and $E_2$, we obtain
\begin{align*}
    |A_2|&\leq 
\Big|\dfrac{1}{t}\int \frac{N^{\theta}}{N^\theta v^{r}+\sinh rv^{\theta}N^{r}}(tX+Y)f \dvol_{T_x\M}(v)\Big|+\dfrac{1}{t}|E_1|+\dfrac{1}{t}|E_2|\\
&\lesssim \dfrac{1}{te^{\alpha t}}\|Yf_0\|_{L^{\infty}_{x,v}} + \dfrac{1}{te^{\alpha t}}\|f_0\|_{L^{\infty}_{x,v}}\\
&\lesssim \dfrac{1}{te^{\alpha t}}\|f_0\|_{W^{1,\infty}_{x,v}}.
\end{align*}

Similarly, the second term $B_2$ can be written using the unstable vector field $U$ by
\begin{align*}
    B_2&=\rho\Big(\frac{\sinh rv^{\theta}}{N^\theta v^{r}+\sinh rv^{\theta}N^{r}}Hf\Big)(t,x)\\
    &=\int \frac{\sinh rv^{\theta}(H+q_uV)f}{N^\theta v^{r}+\sinh rv^{\theta}N^{r}}\dvol_{T_x\M}(v)-\int \frac{\sinh rv^{\theta}q_uVf}{N^\theta v^{r}+\sinh rv^{\theta}N^{r}} \dvol_{T_x\M}(v)\\
    &=\int \dfrac{1}{e^{\int_0^t q_u d\tau}}\frac{\sinh rv^{\theta}Uf}{N^\theta v^{r}+\sinh rv^{\theta}N^{r}}\dvol_{T_x\M}(v)-\int \frac{\sinh rv^{\theta}q_uVf}{N^\theta v^{r}+\sinh rv^{\theta}N^{r}} \dvol_{T_x\M}(v),
\end{align*}
where the second term in the RHS can be integrated by parts
\begin{align*}
\int \frac{\sinh rv^{\theta}q_uVf}{N^\theta v^{r}+\sinh rv^{\theta}N^{r}} \dvol_{T_x\M}(v)&=\int \frac{\sinh rv^{\theta}N^{\theta}}{N^\theta v^{r}+\sinh rv^{\theta}N^{r}}q_u\dfrac{\partial_{v^{\theta}}f}{\sinh r} \dvol_{T_x\M}(v)\\
    &\qquad -\int \frac{\sinh rv^{\theta}N^r}{N^\theta v^{r}+\sinh rv^{\theta}N^{r}} q_u\partial_{v^r}f \dvol_{T_x\M}(v)\\
    &=:F_1-F_2.
\end{align*}

Integrating by parts the first term $F_1$, we have
\begin{align*}
F_1&=\int \frac{\sinh rv^{\theta}N^{\theta}E}{N^\theta v^{r}+\sinh rv^{\theta}N^{r}}\frac{q_u}{E}\dfrac{\partial_{v^{\theta}}f}{\sinh r} \dvol_{T_x\M}(v)\\
&=-\int \frac{\sinh rv^{\theta}\partial_{v^{\theta}}N^{\theta}+\sinh rN^{\theta}}{N^r\sinh r v^{\theta}+v^rN^{\theta}} \dfrac{q_uf}{\sinh r}+ \frac{\sinh rv^{\theta}N^{\theta}\partial_{v^{\theta}}E}{N^\theta v^{r}+\sinh rv^{\theta}N^{r}}\frac{q_u f}{E \sinh r}\dvol_{T_x\M}(v) \\
&\qquad -\int \frac{\sinh rv^{\theta}N^{\theta}E}{N^r\sinh r v^{\theta}+v^rN^{\theta}}\dfrac{\partial_{v^{\theta}}}{\sinh r}\Big(\frac{q_u}{E}\Big)f \dvol_{T_x\M}(v)\\
&\qquad \qquad+\int \frac{\sinh rv^{\theta}N^{\theta}(\partial_{v^{\theta}}N^r\sinh r v^{\theta}+N^r\sinh r+v^r\partial_{v^{\theta}}N^{\theta})}{(N^r\sinh r v^{\theta}+v^rN^{\theta})^2} \dfrac{q_u}{\sinh r}f\dvol_{T_x\M}(v).
\end{align*}
We make use of the decay of the velocity support of the distribution together with \eqref{equation_decay_ptheta_ah}, \eqref{estimates_Nr}, and \eqref{estimates_Ntheta}, to obtain 
\begin{align*}
|F_1|&\lesssim \int \Big|\frac{N^{\theta}q_u}{N^r\sinh r v^{\theta}+v^rN^{\theta}} \Big|f +\Big|\frac{\sinh r v^{\theta}N^{\theta}E}{N^r\sinh r v^{\theta}+v^rN^{\theta}}\frac{\partial_{v^{\theta}}}{\sinh r}\Big(\frac{q_u}{E}\Big)\Big|f \dvol_{T_x\M}(v)\\
&\qquad +\int \frac{|\sinh rv^{\theta}N^{\theta}N^r|}{(N^r\sinh r v^{\theta}+v^rN^{\theta})^2} q_u f\dvol_{T_x\M}(v)+e^{-\alpha t}\|f_{0}\|_{L^{\infty}_{x,v}} \\
&\lesssim e^{-\alpha t}\|f_{0}\|_{L^{\infty}_{x,v}}.
\end{align*}

Integrating by parts the second term $F_2$, we have
\begin{align*}
F_2&=\int \frac{\sinh rv^{\theta} N^{r}E}{N^r\sinh r v^{\theta}+v^rN^{\theta}} \frac{q_u}{E}\partial_{v^r}f \dvol_{T_x\M}(v)\\
&=-\int \frac{\sinh rv^{\theta}\partial_{v^r}N^{r}}{N^r\sinh r v^{\theta}+v^rN^{\theta}} q_uf+ \frac{\sinh rv^{\theta} N^{r}E}{N^r\sinh r v^{\theta}+v^rN^{\theta}}\partial_{v^r}\Big(\frac{q_u}{E}\Big)f \dvol_{T_x\M}(v)\\
&\qquad \qquad+\int \frac{N^{r} \sinh rv^{\theta} (\partial_{v^r}N^r\sinh r v^{\theta}+N^{\theta}+v^r\partial_{v^r}N^{\theta})}{(N^r\sinh r v^{\theta}+v^rN^{\theta})^2} q_uf\dvol_{T_x\M}(v)\\
&\qquad \qquad -\int \frac{\sinh rv^{\theta} N^{r}\partial_{v^r}E}{N^r\sinh r v^{\theta}+v^rN^{\theta}} \frac{q_u}{E}\partial_{v^r}f \dvol_{T_x\M}(v).
\end{align*}
We make use of the decay of the velocity support of the distribution together with \eqref{equation_decay_ptheta_ah}, \eqref{estimates_Nr}, and \eqref{estimates_Ntheta}, to obtain 
\begin{align*}
|F_2|&\lesssim \int \Big|\frac{\sinh rv^{\theta} N^{r}E}{N^r\sinh r v^{\theta}+v^rN^{\theta}}\partial_{v^r}\Big(\frac{q_u}{E}\Big)\Big|f + \frac{|N^{\theta}N^{r} \sinh rv^{\theta}q_u| }{(N^r\sinh r v^{\theta}+v^rN^{\theta})^2} f\dvol_{T_x\M}(v)+\frac{\|f_{0}\|_{L^{\infty}_{x,v}}}{e^{\alpha t}} \\
&\lesssim e^{-\alpha t}\|f_{0}\|_{L^{\infty}_{x,v}}.
\end{align*}

Using the same arguments for the first term of $B_2$ and applying the estimates derived for $F_1$ and $F_2$, we obtain
\begin{align*}
    |B_2|&\leq 
\Big|\int \dfrac{1}{e^{\int_0^t q_u d\tau}}\frac{\sinh rv^{\theta}}{N^\theta v^{r}+\sinh rv^{\theta}N^{r}}Uf \dvol_{T_x\M}(v)\Big|+|F_1|+|F_2|\\
&\lesssim \dfrac{1}{e^{3\alpha t}}\|Uf_0\|_{L^{\infty}_{x,v}} + \dfrac{1}{e^{\alpha t}}\|f_0\|_{L^{\infty}_{x,v}}\\
&\lesssim \dfrac{1}{e^{\alpha t}}\|f_0\|_{W^{1,\infty}_{x,v}}.
\end{align*}
Finally, we obtain that the radial derivative of the spatial density $\partial_{r}\rho(f)$ is bounded above by
$$|\partial_{r}\rho(f)|\leq |A_2|+|B_2|\lesssim \|f_0\|_{W^{1,\infty}_{x,v}}e^{-{\alpha t}}.$$ 

\end{proof}

\addtocontents{toc}{\protect\setcounter{tocdepth}{0}}
\appendix

\section{Proof of (non-optimal) decay for Vlasov fields supported on \texorpdfstring{$\D_{\alpha}$}{DA}}\label{app_subsection_decay_spatial_density_AH}
\addtocontents{toc}{\protect\setcounter{tocdepth}{1}}

Finally, we obtain a non-optimal decay estimate for Vlasov fields on a non-trapping asymptotically hyperbolic manifold supported on $\mathcal{D}_{\alpha}$. The following proposition follows by using Rauch comparison theorem and the hyperbolic law of cosines.

\begin{proposition}
\label{proposition_decay_AH_general_rauch}
Let $\alpha>0$. Let $(\M,g)$ be asymptotically hyperbolic  and non-trapping. Let $f_0$ be an initial data for the Vlasov equation on $(\M,g)$ that is compactly supported on $\D_{\alpha}$. Then, for every $\epsilon>0$, there exists $C_{\e}\geq 0$, such that the spatial density induced by the corresponding Vlasov field $f$ satisfies
$$|\rho(f)(t,x)|\leq \frac{C_{\e}}{\exp{(\alpha(n-1)(1-\e)t)}}\|f_0\|_{L^{\infty}_{x,v}},$$
for every $t\geq 0$ and every $x\in \M$.
\end{proposition}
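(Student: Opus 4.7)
The plan is to bound $|\rho(f)(t,x)|$ by $\|f_0\|_{L^\infty_{x,v}} \cdot \vol_{T_x\M}(\Omega(t,x))$, where $\Omega(t,x) = \{v \in T_x\M : \phi_{-t}(x,v) \in \supp(f_0)\}$ as in Subsection \ref{subsection_decay_spatial_density_AH}, and then to estimate $\vol_{T_x\M}(\Omega(t,x))$ by a Rauch-comparison argument. First, I would invoke Lemma \ref{lem:ahyp} to choose $T_\e > 0$ and $R_1 = R_1(\e) > 0$ large so that, for $t \geq T_\e$, every $g$-geodesic starting in $\supp(f_0)$ has entered the far region $\M \setminus K_{R_1}$ and, by the radial monotonicity $v^r > \alpha/2$ shown in the proof of Lemma \ref{lem:ahyp}, stays there. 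The asymptotic-hyperbolicity condition then forces the sectional curvatures to satisfy $K_g \leq -(1-\e)$ on that region. The finite-time range $0 \leq t \leq T_\e$ contributes only a uniform constant, which can be absorbed into $C_\e$ via the trivial bound on $\vol_{T_x\M}(\Omega(t,x))$; hence the argument reduces to $t \geq T_\e$.

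Next, I would decompose $v = Eu$ with $u \in T_x^1\M$, so that $\dvol_{T_x\M}(v) = E^{n-1}\, dE\, d\omega(u)$ in terms of the standard measure $d\omega$ on $\S^{n-1}$. Since the particle energy is conserved and $\supp(f_0) \subset \D_\alpha$ is compact, only $E \in [\alpha, E_{\max}]$ contributes to $\Omega(t,x)$. For fixed $E$, the admissible directions $u$ correspond to backwards geodesics of arclength $s = Et$ ending in the compact set $\pi(\supp(f_0))$, of diameter $D$. Rauch's comparison theorem, applied in the far region with $K_g \leq -(1-\e)$ against the model space of constant curvature $-(1-\e)$, shows that two geodesics from $x$ opening at infinitesimal angle $\theta$ separate by at least $\sim \theta\, \sinh((1-\e)\, Et)$ at time $t$ (modulo a bounded multiplicative error from the transition region). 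Requiring this separation to be at most $D$ forces $\theta \lesssim_\e e^{-(1-\e)Et}$, so the admissible cap on $\S^{n-1}$ has solid angle $\lesssim_\e e^{-(n-1)(1-\e)Et}$. Integrating,
$$\vol_{T_x\M}(\Omega(t,x)) \lesssim_\e \int_{\alpha}^{E_{\max}} E^{n-1}\, e^{-(n-1)(1-\e)Et}\, dE \lesssim_\e e^{-\alpha(n-1)(1-\e)t},$$
which yields the stated bound after multiplying by $\|f_0\|_{L^\infty_{x,v}}$.

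The main obstacle is the clean transfer of the model-space Rauch estimate to our variable-curvature setting: the curvature bound $K_g \leq -1 + O(e^{-\beta r})$ is sharp only asymptotically, and every geodesic spends a bounded initial time in a transition region where no uniform exponential spreading estimate is available. The $(1-\e)$-loss in the exponent is precisely the price paid for absorbing both this transition error and the $O(e^{-\beta r})$ correction to the curvature. I would make this quantitative by splitting each Jacobi field at the entrance time into the far region: on the initial segment, Cauchy stability of the geodesic flow gives a bounded, $\e$-free multiplicative loss, while on the asymptotic segment the Riccati-style analysis of Subsection \ref{subsec_jacobi_neg} produces the exponential bound $|J(s)|/|J(s_0)| \geq c \exp((1-\e)(s-s_0))$ for all $s \geq s_0$; combining these and choosing $R_1$ large enough to beat the transition constant recovers the rate $\alpha(n-1)(1-\e)$ and thereby the proposition.
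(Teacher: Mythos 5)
Your overall strategy is the one the paper follows: bound $|\rho(f)(t,x)|$ by $\|f_0\|_{L^{\infty}_{x,v}}\vol_{T_x\M}(\Omega(t,x))$, use Lemma \ref{lem:ahyp} and conservation of the particle energy to reduce to large times, bounded energies $E\in[\alpha,E_{\max}]$, and an exponentially small set of admissible directions, and obtain the angular smallness from Rauch comparison in the far region, where asymptotic hyperbolicity pinches the curvature $\epsilon$-close to $-1$; the $(1-\epsilon)$-loss plays the same role in both arguments (the paper's proof carries $\sqrt{1-\delta}$ where you write $1-\epsilon$, and your $\sinh((1-\epsilon)Et)$ should be $\sinh(\sqrt{1-\epsilon}\,Et)$, but since $\epsilon$ is arbitrary this is immaterial). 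Your per-energy cap estimate followed by integration in $E$ is a mild refinement of the paper's cruder step, which bounds the angle using only $E\geq\alpha$ and multiplies by the volume of the ball of radius $E_{\max}$; both give the stated rate. Also note that in your variation the vertex is at $x$, so the far-region segment is the \emph{initial} portion of each backward geodesic and the transition segment is the \emph{final} one; with $J(0)=0$ at $x$, monotonicity of $|J|$ in nonpositive curvature makes that final bounded segment harmless, which is what your ``bounded multiplicative loss'' should refer to.

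The genuine gap is the passage from the infinitesimal Rauch/Jacobi estimate to the finite statement ``endpoint separation at most $D$ forces $\theta\lesssim_{\epsilon} e^{-(1-\epsilon)Et}$''. A lower bound $|J_u(t)|\gtrsim \theta\, e^{(1-\epsilon)Et}$ along each geodesic of the family controls from below the length of the curve $u\mapsto\pi\phi_{-t}(x,Eu)$ traced by the endpoints, not the distance between two endpoints: infinitesimal expansion does not by itself preclude folding, so it does not bound the diameter (or measure) of the admissible cap. To convert it into an angle estimate one needs a global comparison statement (Toponogov/CAT-type angle comparison, equivalently Rauch combined with the hyperbolic law of cosines), and such statements require the curvature bound on a region containing the whole comparison configuration --- which fails here precisely because the geodesics terminate inside the compact set $K_r$. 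This is the point of the device in the paper that your sketch omits: the endpoints are pulled back by arclength $2D$ along each geodesic (using that a $g$-geodesic which has left $K_r$ never returns, and that one may assume $d(x,K_r)\geq 3D$, the complementary region being handled by Lemma \ref{lem:ahyp}), so that the resulting geodesic triangle with vertex $x$ lies entirely in $\M\setminus K_r$, where the law of cosines in the model of curvature $-(1-\delta)$ bounds the vertex angle by $Ce^{-\alpha\sqrt{1-\delta}t}$. Your Jacobi-field splitting at the entry time repairs the \emph{infinitesimal} estimate across the transition region, but it does not by itself yield the cap bound; to close the argument you would either reproduce the paper's back-off triangle comparison, or replace the diameter bound by a measure-theoretic one (for instance, a change-of-variables argument using injectivity of $v\mapsto\pi\phi_{-t}(x,v)$ on $\Omega(t,x)$ together with the lower bound on its Jacobian furnished by the same Jacobi-field analysis).
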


\begin{proof} Fix $\delta>0$ and choose $r$ sufficiently large such that the sectional curvatures of $\Phi^*g$ in $\H^n\setminus \overline{B(r)}$ lies in $[-1-\delta,-1+\delta]$, or equivalently, that the same holds for the sectional curvatures of $g$ in $\M\setminus K_r$. We assume that $\supp (f_0)\subset \widehat{K_r}$ and we set $D=\diam (\widehat{K_r})$. It suffices to prove the estimate when $x$ is at distance at least $3D$ from $K_r$ (see Lemma \ref{lem:ahyp}). In order to bound $\rho(f)(t,x)$ we will estimate $\vol_{T_x\M} \Omega(x,t)$. Firstly, note that if $v\in\Omega(t,x)$, then $\phi_t(x,-v)\in \widehat{K}_r,$ and $|v|_g\ge \alpha$. In particular, if $v_1,v_2\in \Omega(t,x),$ then $d(\phi_{t}(x,-v_1),\phi_{t}(x,-v_2))\leq D.$ By the triangle inequality we obtain 
\begin{equation}\label{1ahyp}
d(\phi_{t-{2D}/{|v_1|_g}}(x,-v_1),\phi_{t-{2D}/{|v_2|_g}}(x,-v_2))\leq 5D.
\end{equation} 
Note that the geodesic triangle with vertices $x$, $\phi_{t-{2D}/{|v_1|_g}}(x,-v_1)$, and  $\phi_{t-{2D}/{|v_2|_g}}(x,-v_2)$ is contained in $\M\setminus K_r$, where the sectional curvature is bounded between $-1-\delta$ and $-1+\delta$. It  follows from inequality \eqref{1ahyp}, the Rauch comparison theorem and the hyperbolic law of cosines that the angle between $v_1$ and $v_2$ is smaller than $Ce^{-\alpha\sqrt{1-\delta}t}$, for some constant $C$ that depends on the support of $f_0$ and $\delta$, but it is independent of $t$ and $x$. We conclude that $\vol_{T_x\M} \Omega(x,t)\le C'e^{-\alpha(n-1)\sqrt{1-\delta}t}$, and therefore 
$$|\rho(f)(t,x)|\le \|f_0\|_{L^{\infty}_{x,v}}\vol_{T_x\M} \Omega(x,t)\le C'\|f_0\|_{L^{\infty}_{x,v}}e^{-\alpha(n-1)\sqrt{1-\delta}t},$$
for some constant $C'=C'(\delta,f_0)$ independent of $t$ and $x$. 
\end{proof}

\bibliographystyle{alpha}
\bibliography{Bibliography.bib} 

\end{document}